\numberwithin{equation}{section}
\newtheorem{theorem}{Theorem}[section]
\newtheorem{proposition}[theorem]{Proposition}
\newtheorem{lemma}[theorem]{Lemma}
\newtheorem{corollary}[theorem]{Corollary}
\newtheorem{remark}[theorem]{Remark}
\newcommand{\rad}{{\text{\upshape rad}}}
\newcommand{\loc}{{\text{\upshape loc}}}
\newcommand{\nod}{{\text{\upshape nod}}}
\renewcommand{\a}{\alpha}
\def\wtu{\widetilde u}
\def\e{{\varepsilon}}
\def\L{{\Lambda}}
\def\l{{\lambda}}
\def\a{{\alpha}}
\def\b{{\beta}}
\newcommand{\jcal}{{\mathcal J}}
\newcommand{\n}{N}
\newcommand{\R}{{\mathbb R}}
\newcommand{\N}{{\mathbb N}}
\def\sideremark#1{\ifvmode\leavevmode\fi\vadjust{\vbox to0pt{\vss
 \hbox to 0pt{\hskip\hsize\hskip1em
 \vbox{\hsize2.1cm\tiny\raggedright\pretolerance10000
  \noindent #1\hfill}\hss}\vbox to15pt{\vfil}\vss}}}%
\newcommand{\edz}[1]{\sideremark{#1}}
\definecolor{darkgreen}{rgb}{0.0, 0.5, 0.2}
\definecolor{purple}{rgb}{0.5, 0.0, 0.5}
\newcommand{\AL}{\color{purple}}
\newif\ifcomment \commentfalse
\def\commentON{\commenttrue}
\long\outer\def\BC#1\EC{\ifcomment \sloppy \par \# \ldots\dotfill
	{\em #1} \dotfill \# \par \fi } \commentON
\newcommand{\remove}[1]{}
\title{On the asymptotically linear H\'enon problem} 
\author[A.~L.~Amadori]{Anna Lisa Amadori$^\dag$}
\thanks{The author is member of the Gruppo Nazionale per l'Analisi Matematica, la Probabilit\`a e le loro Applicazioni (GNAMPA) of the Istituto Nazionale di Alta Matematica (INdAM). }
\date{\today}
\address{$\dag$ Dipartimento di Scienze Applicate, Universit\`a di Napoli ``Parthenope", Centro Direzionale di Napoli, Isola C4, 80143 Napoli, Italy. \texttt{annalisa.amadori@uniparthenope.it}}
\begin{document}
\maketitle 

\begin{abstract}
	In this paper we consider the H\'enon problem in the ball with Dirichlet boundary conditions. We study the asymptotic profile of radial solutions and then deduce the exact computation of their Morse index when the exponent $p$ is close to $1$.
	Next we focus on the planar case and describe the asymptotic profile of some  solutions which minimize the energy among functions  which are invariant for reflection and rotations of a given angle $2\pi/n$. By considerations based on the Morse index we see that, depending on the values of $\a$ and $n$, such least energy solutions can   be radial, or nonradial and different one from another.
	\end{abstract}

\

\noindent {\bf Keywords: } nodal, least energy, radial and non-radial solutions; asymptotic profile; Morse index.

\noindent {\bf AMS Subject Classifications:}  35J91, 35B06, 35B40, 35P05

\section{Introduction}
This paper investigates the H\'enon problem
\begin{equation} \label{H}
\left\{\begin{array}{ll}
-\Delta u = |x|^{\alpha}|u|^{p-1} u \qquad & \text{ in } B, \\
u= 0 & \text{ on } \partial B,
\end{array} \right.
\end{equation} 
where $\a> 0$, $B$ stands for the unitary ball in $\R^N$ with $N\ge 2$, and the exponent $p$ is next to one.

It is well known that, for $\a>0$ fixed, the H\'enon problem \eqref{H} admits solutions, and in particular radial solutions, for every $p> 1$ in dimension $N=2$, and for every $p\in(1,\frac{N+2+2\a}{N-2})$ in dimension $N\ge 3$.
In that range of existence, for any given $m\ge 1$ there  is exactly one couple of radial solutions of \eqref{H} which have exactly $m$ nodal zones, they are classical solutions and they are one the opposite of the other (see \cite{Ni}, \cite{BWi}, \cite{NN}). 
It is well known that 
\eqref{H} has also nonradial positive solutions, and the literature on this subject is rich. First   \cite{SSW} showed that  for every value of $p$ below the Sobolev critical exponent, then the minimal energy solution is nonradial when $\a$ is large enough.
Next  nonradial solutions  have been produced in a number of works by various techniques: we mention among others \cite{PS}, \cite{EPW},  \cite{HCZ} based on  Lyapunov-Schmidt reduction, \cite{S}, \cite{AG-N=2} relying on different constrained minimization,  and  \cite{AG14} using bifurcation.
Nevertheless it is worth mentioning that  for $\a>0$ fixed there is a neighborhood of $p=1$ (clearly depending by $\a$) where the only positive solution to \eqref{H} is the radial one, see \cite[Theorem 3.1]{AG14}.
\\
Concerning nodal solutions, considerations based on the Morse index yield that the minimal energy solution is nonradial for every $\a>0$ and $p$ in the existence range. Indeed the minimal energy nodal solution has Morse index 2 by \cite{BW}, while the Morse index of nodal radial solutions is greater than 4, see \cite{AG-sez2-bis}. The same phenomenon  was already pointed out in \cite{AP}, concerning the Lane Emden problem.
In dimension $N=2$ and for large values of $p$ sign-changing multipeak solutions have been produced in \cite{ZY}.
In the same range  \cite{AG-N=2} proved a multiplicity result by constructing least energy nodal solutions in suitable symmetric spaces and comparing their Morse index with the one of radial solutions.
In higher dimension another recent paper  \cite{KW}  produced  nonradial solutions  by bifurcation w.r.t.~the parameter $\a$, by taking advantage from the fact that the Morse index of radial solutions goes to infinity.
In this perspective knowing the exact Morse index is an essential step in producing nonradial solutions. 
Its value has been computed when the parameter $\a$ is large  in \cite{LWZ}, and when the parameter $p$ is close to the supremum of the existence range in \cite{AG-N>3} (in dimension $N\ge 3$) and in \cite{AG-N=2} (in dimension $N=2$).
\\
Here we describe the asymptotic profile of radial solutions and compute the exact value of their Morse index for $p$ close to $1$, and we use it to obtain, for any given value of $\a$, nonradial solutions which live in a  neighborhood of $p=1$, and which anyway preserve some rotational symmetry. In doing this we also describe the asymptotic profile of such noradial solutions. 

But let us go in order.
The papers \cite{BBGvS} and \cite{Gro09} investigated  the Lane-Emden problem settled in any domain $\Omega$, when $p$ approaches 1,
and described the behaviour of the solutions in terms of the eigenvalues of the Laplace operator on $\Omega$, i.e. 
\begin{equation}\label{prima-autof}
\begin{cases}
-\Delta \omega = \mu  \, \omega \quad & \text{ in } \Omega , \\
\omega= 0 & \text{ on } \partial\Omega ,
\end{cases}
\end{equation}
showing, among other things, that any sequence of solutions  whose norm in $L^2(\Omega)$ is suitably bounded converges (up to a subsequence) to an eigenfunction of \eqref{prima-autof}.
The radial setting is clearly simpler, and one can see that any radial solutions satisfies the $L^2$ bound as $p$ is next to 1, and a sequence of radial solutions with $m$ nodal zones converges to the $m^{th}$ radial eigenfunction of \eqref{prima-autof}, which is simple and is nothing else that a Bessel function.
Analogous result holds for the H\'enon problem \eqref{H}, provided that the eigenvalue problem for the Laplacian is replaced by the weighted eigenvalue problem
\begin{equation}\label{prima-autof-weight} 
\left\{
\begin{array}{ll}
-\Delta \omega= \mu |x|^{\a} \omega & \text{ in } B, \\
\omega = 0 & \text{ on } \partial B ,  
\end{array}\right.
\end{equation}
which clearly reduces to \eqref{prima-autof} when $\a=0$ and $\Omega$ is a ball.
Our first result, presented in Section \ref{S:1}, stands in computing the eigenvalues and eigenfunctions of \eqref{prima-autof-weight} and describing the asymptotic behaviour of radial solutions as $p\to 1$.
\\
We denote by $\Gamma$  the Gamma-function, by $\jcal_{\beta}$ the Bessel function of first kind defined as 
\[\jcal_{\beta}(r) = r^{\beta}\sum\limits_{k=0}^{+\infty} \dfrac{(-1)^k}{k!\Gamma(k+1+\beta)} \left(\frac{r}{2}\right)^{2k}, \quad r\ge 0 , \]
and by $z_n(\beta)$ the sequence of its positive zeros.
We will prove that

\begin{theorem}\label{p1}
		Let $u_p$ be a radial solution to \eqref{H} with $m$ nodal zones for $\a\ge 0$.
	When  $p\to 1$ we have  
	\begin{align}\label{a0}
	\| u_p\|_{\infty}^{\frac{p-1}{2}} & \to \frac{2+\a}{2} z_m\left(\frac{\n-2}{2+\alpha}\right),  \\
	\label{a0bis} \edz{\AL controllare le costanti}
	\frac{u_p(x)} {\| u_p\|_{\infty}} & \to \pm \Gamma\left(\frac{N+\a}{2+\a}\right) |x|^{-\frac{\n-2}{2}}\jcal_{\frac{\n-2}{2+\alpha}}\left(z_m\left(\frac{\n-2}{2+\alpha}\right)	|x|^{\frac{2+\alpha}{2}} \right) 
\quad  \text{ in } C^2(B_1)  .
	\intertext{Moreover denoting by $0<r_{1,p}< \dots r_{m,p}=1$ the nodal radii of $u_p$ we have}
	\label{a2} r_{i,p} & \to \left(\frac{z_i\left(\frac{\n-2}{2+\alpha}\right)}{z_m\left(\frac{\n-2}{2+\alpha}\right)}\right)^{\frac{2}{2+\a}} \qquad \text{ as } i=1,\dots m-1. &
	\end{align}
\end{theorem}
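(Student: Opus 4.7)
The plan is to normalize by the $L^\infty$-norm and extract a linear limit. First, one computes the radial spectrum of \eqref{prima-autof-weight}: the Liouville change of variables $s=\tfrac{2\sqrt\mu}{2+\a}\,r^{(2+\a)/2}$ together with $w(r)=r^{-(N-2)/2}V(s)$ reduces the radial version of \eqref{prima-autof-weight} to Bessel's equation of order $\beta=\tfrac{N-2}{2+\a}$; regularity at $r=0$ forces the first-kind Bessel function while the Dirichlet condition at $r=1$ selects $s=z_m(\beta)$, yielding the $m$-th radial eigenpair
\[
\mu_m=\left(\tfrac{2+\a}{2}\right)^{\!2}z_m^2(\beta),\qquad w_m(r)=r^{-(N-2)/2}\jcal_\beta\!\bigl(z_m(\beta)\,r^{(2+\a)/2}\bigr).
\]

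Next set $\tilde u_p=u_p/\|u_p\|_\infty$ and $\l_p=\|u_p\|_\infty^{p-1}$, so that $\tilde u_p$ solves
\[
-\D\tilde u_p=\l_p|x|^\a|\tilde u_p|^{p-1}\tilde u_p\ \text{in}\ B,\quad \tilde u_p=0\ \text{on}\ \partial B,\quad\|\tilde u_p\|_\infty=1,
\]
with the same $m$ nodal zones as $u_p$. It suffices to prove $\l_p\to\mu_m$ and $\tilde u_p\to\pm w_m/\|w_m\|_\infty$ in $C^2$. The heart of the matter is the uniform bound $0<c\le\l_p\le C$ as $p\to 1$. Testing the equation by $\tilde u_p$ on any nodal zone $A$ and using $|\tilde u_p|^{p+1}\le \tilde u_p^2$ together with the weighted Poincar\'e inequality yields $\l_p\ge \mu_1^{\text{wei}}(A)$, where $\mu_1^{\text{wei}}(A)$ is the first eigenvalue of \eqref{prima-autof-weight} on $A$ with Dirichlet data. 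The explicit scaling $\mu_1^{\text{wei}}(B_\rho)=\mu_1\rho^{-(2+\a)}$, a byproduct of the same Liouville change of variables, then forbids any of the $m$ zones from collapsing, yielding both the lower bound and---since the zones partition $B$---an upper bound on $\l_p$.

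With $\l_p$ in a compact subinterval of $(0,+\infty)$, the right-hand side of the rescaled equation is uniformly bounded in $L^\infty$, and Schauder estimates furnish $C^{2,\g}$-bounds on $\tilde u_p$. Up to a subsequence, $\tilde u_p\to w$ in $C^2(\bar B)$ and $\l_p\to\l_\infty$. Since $|\tilde u_p|^{p-1}\to 1$ on $\{w\ne 0\}$ and is dominated by $1$ on $B$, the limit satisfies $-\D w=\l_\infty|x|^\a w$ with $\|w\|_\infty=1$. ODE uniqueness makes the zeros of $w$ simple, and the $C^2$-convergence combined with the lower bound on each nodal radius ensures that $w$ retains exactly $m$ nodal zones; by simplicity of the radial eigenvalues, $\l_\infty=\mu_m$ and $w$ coincides, up to sign, with the $L^\infty$-normalized $m$-th radial eigenfunction---direct inspection of the series defining $\jcal_\beta$ produces the explicit factor $\G\bigl(\tfrac{N+\a}{2+\a}\bigr)=\G(1+\beta)$ of \eqref{a0bis}. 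Then \eqref{a0} is $\l_p\to\mu_m$, and \eqref{a2} follows from the observation that the zeros of $w_m$ in $(0,1]$ occur precisely where $z_m(\beta)r^{(2+\a)/2}=z_i(\beta)$, combined with $C^2$-convergence and the simplicity of these zeros.

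The main obstacle is the uniform upper bound on $\l_p$: keeping all $m$ nodal zones of $\tilde u_p$ uniformly non-degenerate as $p\to 1$ is the delicate point, and it is precisely here that the explicit scaling $\mu_1^{\text{wei}}(B_\rho)\sim \rho^{-(2+\a)}$ becomes decisive.
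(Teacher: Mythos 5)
Your outline has the right overall architecture---compute the radial spectrum of \eqref{prima-autof-weight} via the Liouville change of variable, normalize $\tilde u_p = u_p/\|u_p\|_\infty$, pass to the linear limit, and match the limit to the $m$-th radial eigenfunction by counting nodal zones---and the lower-bound/non-collapse step via the scaled Poincar\'e inequality $\mu_1^{\mathrm{wei}}(B_\rho)=\mu_1\rho^{-(2+\a)}$ is sound and in the same spirit as the paper's. However, there is a genuine gap in the step you yourself flag as the \emph{main obstacle}: the \emph{upper} bound $\l_p=\|u_p\|_\infty^{p-1}\le C$.

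The inequality you derive, namely $\l_p\ge \mu_1^{\mathrm{wei}}(A)$ for each nodal zone $A$, is a \emph{lower} bound on $\l_p$. It cannot yield an upper bound from the observation that the zones partition $B$: the implication runs ``zone collapses $\Rightarrow\mu_1^{\mathrm{wei}}(A)\to\infty\Rightarrow\l_p\to\infty$,'' whose contrapositive requires $\l_p$ already bounded to conclude non-collapse, so using non-collapse to bound $\l_p$ is circular. Nor does any partition argument save you: from $\l_p\ge\max_i\mu_1^{\mathrm{wei}}(A_i)$ you only recover lower bounds (e.g.\ $\l_p\ge\mu_m^{\mathrm{wei}}(B)$ by min--max), since the Courant-type inequality $\mu_m^{\mathrm{wei}}(B)\le\max_i\mu_1^{\mathrm{wei}}(A_i)$ points the wrong way.

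What closes the gap in the paper is a blow-up argument: assuming $\tau_n:=\|u_{p_n}\|_\infty^{(p_n-1)/(2+\a)}\to\infty$, one rescales $U_n(x)=\|u_{p_n}\|_\infty^{-1}u_{p_n}(x/\tau_n)$ on $B_{\tau_n}$, obtains a bounded radial limit $U$ solving $-\D U=|x|^\a U$ on all of $\R^N$ with at most $m$ nodal zones, and shows via the Liouville change of variables that $U$ must be (a multiple of) a Bessel profile, which oscillates infinitely often---contradiction. This argument (or an equivalent oscillation/Sturm comparison on the radial ODE after the same rescaling) is the missing ingredient; without it the uniform bound on $\l_p$, and hence the rest of your proof, does not get off the ground.
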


Next we exploit the characterization of the Morse index  in terms of a singular Sturm-Liouville problem given in \cite{AG-sez2} and recalled here in Subsection \ref{S:1.1}. Thanks to the convergence established in Theorem \ref{p1}, we are able to pass to the limit also in that singular problem and compute the Morse index of $u_p$ in a right neighborhood of $p=1$.
To state the related result some more notation is needed.
Since the map $\beta\mapsto z_i(\beta)$ is continuous and increasing, for every integer $m$ fixed there exist $\beta_i=\beta_i(\alpha, N)>0$  such that $z_i(\beta_i)$ (the $i^{th}$ zero of the Bessel function ${\mathcal J}_{\beta_i}$) coincides with $z_m(\frac{\n-2}{2+\alpha})$ (the $m^{th}$ zero of ${\mathcal J}_{\frac{\n-2}{2+\alpha}}$).
Next we write \[
N_j:=\begin{cases}
1 & \text{ when }j=0\\
\frac{(N+2j-2)(N+j-3)!}{(N-2)!j!} & \text{ when }j\geq 1
\end{cases}\]
for the multiplicity of the eigenvalue $\l_j:=j(N+j-2)$ of the Laplace-Beltrami operator in the sphere ${\mathbb S}_{N\!-\!1}$, and
$\lceil s\rceil=\min\{n\in \mathbb Z \, : \, n\ge s\}$ for the ceiling function.

	\begin{theorem}\label{mi-p=1} 	Let $u_p$ be a  radial solution to \eqref{H} with $m$ nodal zones.
	For every $\a\ge 0$ there is $\bar p =\bar p(\a)>1$ such that for $p\in(1,\bar p)$ the Morse index of $u_p$ is given by
	\begin{equation}\label{morsep=1}
	m(u_p)= 1+\sum_{i=1}^{m-1}\sum _{j=0}^{\left\lceil \frac{(2+\a)\beta_i -N}{2} \right\rceil} N_j  .
	\end{equation}
	if $\alpha \neq \a_{\ell,n} = (2n+N-2)/\beta_{\ell} -2$ (as $\ell=1,\dots m-1$, $n\in{\mathbb N}$).
	Otherwise if $\a= \a_{\ell,n}$   the Morse index is estimated by
	\begin{equation}\label{morsep=1estbrutta} \begin{split}	1+\sum_{i=1}^{m-1}\sum _{j=0}^{\left\lceil \frac{(2+\a)\beta_i -N}{2} \right\rceil} N_j \le  m(u_p)	\le 1 +\sum_{i=1}^{m-1}\sum _{j=0}^{\left\lceil \frac{(2+\a)\beta_i -N}{2} \right\rceil} N_j + \sum\limits_{\ell} N_{\frac{(2+\a)\beta_{\ell} -N}{2}+1 }   .	\end{split}	\end{equation}
\end{theorem}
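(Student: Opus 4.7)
The starting point is the characterisation of the Morse index from \cite{AG-sez2}, recalled in Subsection \ref{S:1.1}: for a radial solution $u_p$ one has $m(u_p) = \sum_{j\ge 0} N_j\, m_j(u_p)$, where $m_j(u_p)$ counts the negative eigenvalues $\nu$ of the singular Sturm--Liouville problem
\begin{equation*}
\mathcal{L}_{p,j}\psi := -\psi'' - \frac{\n-1}{r}\psi' + \frac{\l_j}{r^2}\psi - p\,r^{\a}|u_p|^{p-1}\psi = \nu\,\psi \quad\text{in } (0,1),
\end{equation*}
with Dirichlet condition at $r=1$ and the integrability condition at $r=0$ (here $\l_j=j(\n+j-2)$). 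Classical radial Sturm--Liouville theory (or the same analysis specialised to $j=0$) gives $m_0(u_p)=m$, so the remaining task is to compute $m_j(u_p)$ for $j\ge 1$.

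\textbf{Step 1 (the limit operator and its spectrum).} By Theorem \ref{p1}, $\|u_p\|_\infty^{p-1}\to \mu_m:=\big(\tfrac{2+\a}{2}\big)^{2} z_m(\tfrac{\n-2}{2+\a})^{2}$ and $(|u_p|/\|u_p\|_\infty)^{p-1}\to 1$ uniformly on $\overline{B}$; hence $p\,|u_p|^{p-1}\to \mu_m$ uniformly and $\mathcal{L}_{p,j}$ converges, in norm-resolvent sense on $L^2((0,1); r^{\n-1}dr)$, to $\mathcal{L}_{\infty,j} := -\partial_r^2 - \tfrac{\n-1}{r}\partial_r + \tfrac{\l_j}{r^2} - \mu_m r^{\a}$. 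The Liouville substitution $s=r^{(2+\a)/2}$ combined with the ansatz $\psi(r)=r^{-(\n-2)/2}\jcal_{\beta(j)}(z\,r^{(2+\a)/2})$, where $\beta(j):=\tfrac{2j+\n-2}{2+\a}$, transforms $\mathcal{L}_{\infty,j}\psi=\nu\psi$ into Bessel's equation and yields $\nu = \big(\tfrac{2+\a}{2}\big)^{2}\big(z^2 - z_m(\tfrac{\n-2}{2+\a})^{2}\big)$, with the Dirichlet condition $\psi(1)=0$ selecting $z$ among the positive zeros $\{z_i(\beta(j))\}_i$ of $\jcal_{\beta(j)}$. Hence the negative eigenvalues of $\mathcal{L}_{\infty,j}$ correspond exactly to the zeros of $\jcal_{\beta(j)}$ smaller than $z_m(\tfrac{\n-2}{2+\a})$; by the definition $z_i(\beta_i)=z_m(\tfrac{\n-2}{2+\a})$ and the strict monotonicity $\beta\mapsto z_i(\beta)$, their number equals $\#\{i\in\{1,\dots,m-1\}:\beta_i>\beta(j)\}$, i.e.\ the number of $i$'s such that $j\le\lceil((2+\a)\beta_i-\n)/2\rceil$.

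\textbf{Step 2 (perturbation for $p>1$).} When $\a\neq\a_{\ell,n}$ no eigenvalue of $\mathcal{L}_{\infty,j}$ vanishes for any $j\ge 1$, so by continuous dependence of the spectrum upon uniform perturbations of the potential there exists $\bar p(\a)>1$ such that the count of negative eigenvalues of $\mathcal{L}_{p,j}$ equals that of $\mathcal{L}_{\infty,j}$ for every $p\in(1,\bar p)$ and every $j\ge 1$; combining this with $m_0(u_p)=m$ and exchanging the order of summation yields \eqref{morsep=1}. In the resonant case $\a=\a_{\ell,n}$, the operator $\mathcal{L}_{\infty,n}$ has a zero eigenvalue of multiplicity $N_n=N_{((2+\a)\beta_\ell-\n)/2+1}$: the sign of its correction as $p\to 1^+$ is not determined by the zeroth-order analysis, and this multiplicity may or may not contribute to the count, producing the two-sided bound \eqref{morsep=1estbrutta}. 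The main technical obstacle is making the spectral convergence uniform in $j$, so that only finitely many modes carry negative eigenvalues for all $p\in(1,\bar p)$: this follows from the fact that for $j$ sufficiently large the repulsive centrifugal barrier $\l_j/r^2$ dominates the bounded attractive potential $p\,r^{\a}|u_p|^{p-1}$, making $\mathcal{L}_{p,j}$ positive definite uniformly in $p$.
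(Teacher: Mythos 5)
Your outline follows essentially the same route as the paper: decompose along spherical harmonics, identify the limit operator as $p\to 1$, solve the resulting Bessel equation, and conclude by spectral continuity when no limiting eigenvalue hits zero. The paper organizes this via the transformed variable $t=r^{(2+\a)/2}$ and the singular Sturm--Liouville problem \eqref{radial-singular-problem-LE}, and cites Proposition \ref{prop:morse-formula} for the decomposition, but the mathematics is the same.

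There is, however, a genuine gap in Step 1. You assert that $(|u_p|/\|u_p\|_\infty)^{p-1}\to 1$ \emph{uniformly} on $\overline{B}$, hence that $p\,|u_p|^{p-1}\to\mu_m$ uniformly and $\mathcal{L}_{p,j}\to\mathcal{L}_{\infty,j}$ in norm-resolvent sense. This is false: near the nodal set of $u_p$ the quantity $|\bar u_p|^{p-1}$ stays far from $1$ (take a point $x_p$ with $|\bar u_p(x_p)|=e^{-1/(p-1)}$, so $|\bar u_p(x_p)|^{p-1}=e^{-1}$). What one actually has is pointwise a.e.\ convergence with uniform bound, hence only weak $L^2$ convergence of the potential — this is precisely what the paper records in Corollary \ref{p1v}, eq.\ \eqref{a1v}. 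Weak convergence of the potential does \emph{not} automatically give norm-resolvent convergence, so the continuity of the eigenvalue count cannot be invoked off the shelf. The paper's Proposition \ref{formulainutile} fills this gap by proving directly that the singular eigenvalues $\nu_i(p)$ converge, via an equicontinuity/Ascoli argument for the normalized eigenfunctions (using the a priori estimates \eqref{psi-in-0}), a Poincar\'e-type inequality to rule out collapse of nodal intervals, and — for the last eigenvalue $\nu_m(p)$ — a Sturm--Picone comparison. Your sketch silently assumes these conclusions; a complete proof along your lines would still have to reproduce the substance of Proposition \ref{formulainutile}, or prove a compactness statement for the eigenfunctions strong enough to pass to the limit with a merely weakly convergent coefficient.

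Two minor remarks. First, in the resonant case the identification $N_n = N_{((2+\a)\beta_\ell - N)/2 + 1}$ is correct, and the two-sided estimate is handled as you say. Second, the uniform-in-$j$ positivity for large $j$ is indeed elementary and is implicit in the paper through \eqref{nl<k-general-H}--\eqref{num>k-general-H}; your observation about the centrifugal barrier is a legitimate alternative.
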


In the particular case of positive solutions Theorem \ref{mi-p=1} recovers that $m(u_p)=1$, which is clearly true for the positive solution to the Lane-Emden equation (whose Morse index is equal to 1 for any value of the parameter $p$), and was already proved in \cite{AG14} for the H\'enon equation in dimension $N\ge 3$.
Coming to nodal solutions, the formula \eqref{morsep=1} is not totally explicit since the law $\beta\mapsto z_i(\beta)$ is not known. However the value of $z_i(\b)$ can be computed by a numerical procedure (for instance by the command \texttt{besselzero}  in MatLab), and by a dichotomy argument the approximated values of $\beta_i^m$  can be deduced.
\\
For the Lane-Emden equation ($\a=0$) the numerical approximation  suggests that  
$2(m-i)-1 < \beta_i -\frac{N}{2} < 2(m-i)$, so that 
\eqref{morsep=1} becomes
\begin{align*} 
m(u_p) & = m+\sum\limits_{i=1}^{m}(1+m-i)(N_{2i-1}+N_{2i}) ,
\intertext{ which simplifies into }
m(u_p) &= m(2m-1) 
\end{align*}
in dimension $N=2$.

In the plane the approximation procedure is elementary also for $\a > 0$, because the baseline Bessel function is $\jcal_0$, whose zeros are tabulated.
Of particular interest is the case of the radial solution with 2 nodal zones, which is  the least energy nodal radial solution (see \cite{BWW}) and  will be  denoted by $u^{\ast}_p$ in the following.
In that case  formula \eqref{morsep=1} becomes 
\begin{equation}\label{morsep=1-N=2}
m(u^{\ast}_p) = 2 \left\lceil\frac{2+\a}{2}\beta\right\rceil  , \quad \text{with $\beta \approx 2,\!305$.}
\end{equation}
	In particular the least energy nodal radial solution to the Lane-Emden equation ($\a=0$) has Morse index 6, as already noticed in \cite{GI}.
	For small positive values of $\alpha$ the Morse index remains 6, while there is a sequence of critical values $\alpha_n = 2 (n/\beta -1)$ where  the asymptotic Morse index increases.
	This phenomenon, which is enlightened here for the first time, suggests that the structure of the set of the solutions to \eqref{H} changes in correspondence of these values of $\a$, for $p$ arbitrarily close to $1$.
	
	To explore this issue further we focus onto the so called $n$-invariant solutions, introduced in  \cite{GI} in the Lane-Emden case, and studied also in \cite{AG-N=2} in the H\'enon case (for large values of $p$).
	We say that a function defined on the $2$-dimensional unitary ball $B$ is $n$-invariant if it is invariant for reflection across the horizontal axis and  for rotations of an angle $2\pi/n$, and we denote by $H^1_{0,n}$ the subspace of $H^1_{0}(B)$ made up by  $n$-invariant functions,  i.e.  in polar coordinates
	\begin{align*} 
	H^1_{0,n} &  : =    \left\{u\in    H^1_{0}(B)  \, : \, u(r,\theta) \hbox{ is even and } {2\pi}/n  \hbox{ periodic  w.r.t. } \theta, \, \hbox{ for every } r\in (0,1) \right\} .
	\end{align*}
	A by now standard compactness argument in the respective nodal Nehari manifold (see \cite{BW}) produces, for every integer $n$ and $p>1$,  a nodal solution to \eqref{H} which is $n$-invariant, and minimizes the associated energy among $n$-invariant functions. We denote by $U_{p,n}$ such {\it $n$-invariant nodal  least energy solution}.
In particular $U_{p,1}$ coincides with the {\it least energy nodal solution}, thanks to the symmetry result in \cite{BWW}, and so it is known that it is nonradial for every value of $p$.
Coming to $n\ge 2$, it is not known if $U_{p,n}$ are radial or not, neither if they are distinct one from another.
For instance in the Lane-Emden case \cite{GI} showed that for $p$ close to 1 $U_{p,n}$ are nonradial  as $n=1,2$, while they are radial and coincide with $u^\ast_p$ for $n\ge 3$.
\\
To find an answer to this questions we investigate the asymptotic profile of $U_{p,n}$  for $p$ close to 1, proving that
\begin{theorem}\label{n-asympt}
	Let $U_{p,n}$ be a $n$-invariant least energy nodal solution to \eqref{H} in dimension $N=2$.
	As $p\to 1$ we have
	\begin{align}\label{unod-p=1-fin-1}	 
	\|U_{p,n}\|_{\infty}^{p-1}\longrightarrow & 
	\left(\frac{2+\alpha}{2} \, z_{1}\left(\frac{2n}{2+\alpha}\right)\right)^2 , \\
	\label{unod-p=1-fin-2}	 
	\frac{{U_{p,n}(r,\theta)}}{\|U_{p,n}\|_{\infty}} \longrightarrow   &  
	\pm \frac{1}{\|\jcal_{\frac{2n}{2+\alpha}}\|_{\infty}} \jcal_{\frac{2n}{2+\alpha}}\left(z_{1}\left(\frac{2n}{2+\alpha}\right)r^{\frac{2+\alpha}{2}}\right)   \cos(n\theta) .
	\intertext{for $n< \frac{2+\a}{2}\beta$, and}
	\label{unod-p=1-fin-1-alti}	 
	\|U_{p,n}\|_{\infty}^{p-1}\longrightarrow & 
	\left(\frac{2+\alpha}{2} \, z_{2}(0)\right)^2 , \\
	\label{unod-p=1-fin-2-alti}	 
	\frac{{U_{p,n}(r,\theta)}}{\|U_{p,n}\|_{\infty}} \longrightarrow   &  
	\pm \jcal_{0}\left(z_{2}(0)\, r^{\frac{2+\alpha}{2}}\right)  .
	\end{align}				
	for $n>\frac{2+\a}{2}\beta$.
\end{theorem}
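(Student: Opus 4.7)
The plan is to rescale $U_{p,n}$ by its $L^\infty$ norm and identify the limit as a sign-changing $n$-invariant eigenfunction of the weighted problem \eqref{prima-autof-weight} with the smallest possible eigenvalue. Setting $w_p := U_{p,n}/\|U_{p,n}\|_\infty$ and $\mu_p := \|U_{p,n}\|_\infty^{p-1}$, the equation reads $-\Delta w_p = \mu_p |x|^\a |w_p|^{p-1} w_p$ in $B$ with $\|w_p\|_\infty=1$. In dimension $N=2$, the separation of variables for \eqref{prima-autof-weight} already carried out in Section \ref{S:1} shows that its eigenvalues in $H^1_{0,n}$ are $\mu_{k,jn}=\bigl(\tfrac{2+\a}{2} z_k(\tfrac{2jn}{2+\a})\bigr)^2$ with corresponding eigenfunctions $\jcal_{2jn/(2+\a)}(z_k(\tfrac{2jn}{2+\a}) r^{(2+\a)/2})\cos(jn\theta)$, $k\ge 1$, $j\ge 0$. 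Sign-changing eigenfunctions require either $k\ge 2$ (radial node) or $j\ge 1$ (angular node), so by monotonicity of $\b\mapsto z_k(\b)$ the smallest sign-changing eigenvalue in $H^1_{0,n}$ is exactly $\min(\mu_{2,0},\mu_{1,n})$; the comparison $z_1(\tfrac{2n}{2+\a})\gtrless z_2(0)=z_1(\b)$ selects the two sub-cases of the theorem.

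For the upper bound on $\mu_p$, I use the competitor $\phi^{(1)}(r,\theta) := \jcal_{2n/(2+\a)}(z_1(\tfrac{2n}{2+\a})\,r^{(2+\a)/2})\cos(n\theta)$ (respectively $\phi^{(2)}(r) := \jcal_0(z_2(0)\,r^{(2+\a)/2})$), which is $n$-invariant and sign-changing. Its projection $s^+\phi^{(i)+}-s^-\phi^{(i)-}$ onto the nodal Nehari manifold, with $s^\pm$ determined by $(s^\pm)^{p-1}=\int|\nabla\phi^{(i)\pm}|^2/\int|x|^\a(\phi^{(i)\pm})^{p+1}=:R^\pm$, is admissible in the minimization that defines $U_{p,n}$. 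Because $\phi^{(i)}$ is itself an eigenfunction with eigenvalue $\mu^{(i)}$, one has $R^\pm\to\mu^{(i)}$ as $p\to 1$. Writing $E_p(\mathrm{proj}\,\phi^{(i)})=\tfrac{p-1}{2(p+1)}\sum_\pm(R^\pm)^{(p+1)/(p-1)}\int|x|^\a(\phi^{(i)\pm})^{p+1}$, taking logarithms of $E_p(U_{p,n})\le E_p(\mathrm{proj}\,\phi^{(i)})$, and dividing by the diverging factor $(p+1)/(p-1)$ yields $\limsup_{p\to 1}\mu_p\le\mu^{(i)}$, hence $\limsup\mu_p\le\min(\mu_{1,n},\mu_{2,0})$. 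In particular $\mu_p$ is bounded above; a standard Nehari lower bound $\int |x|^\a|w_p^\pm|^{p+1}\ge c>0$ (from the two-dimensional Sobolev embedding), combined with the identity $\int|\nabla w_p^\pm|^2=\mu_p\int|x|^\a|w_p^\pm|^{p+1}$ obtained by testing the equation against $w_p^\pm$, also gives $\liminf\mu_p>0$.

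Elliptic regularity applied to $w_p$ then provides uniform $C^{2,\g}$ bounds, so along a subsequence $w_p\to w^*$ in $C^2(\overline B)$ and $\mu_p\to\mu^*>0$, with $-\Delta w^*=\mu^* |x|^\a w^*$, $\|w^*\|_\infty=1$, $w^*$ $n$-invariant, and (by passage to the limit of the Nehari estimate) sign-changing. Hence $\mu^*$ is a sign-changing eigenvalue of \eqref{prima-autof-weight} in $H^1_{0,n}$, so $\mu^*\ge\min(\mu_{1,n},\mu_{2,0})$; combined with the upper bound this forces equality. Since in each non-critical regime the corresponding eigenspace is one-dimensional inside $H^1_{0,n}$, $w^*$ coincides up to a sign with the eigenfunction on the right-hand side of \eqref{unod-p=1-fin-2} or \eqref{unod-p=1-fin-2-alti}, and uniqueness of the limit upgrades subsequential convergence to full convergence. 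The main obstacle is the energy comparison: both $E_p(U_{p,n})$ and $E_p(\mathrm{proj}\,\phi^{(i)})$ diverge or vanish exponentially as $p\to 1$ through the exponent $(p+1)/(p-1)$, so the inequality between them must be handled in logarithmic form to extract clean information on $\mu_p$; a secondary technical point is excluding concentration of $w_p^\pm$ near the origin, but this is prevented by the explicit weight $|x|^\a$ with $\a\ge 0$ together with the uniform Nehari bound.
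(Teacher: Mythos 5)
Your identification of the limit eigenvalue as $\min(\mu_{1,n},\mu_{0,2})$, the Nehari-projection competitor with separate scalings on the two nodal zones, the logarithmic extraction of $\limsup\mu_p$, and the final simplicity/uniqueness argument all match the paper's Steps~2--5 closely and are correct in that role. The gap is upstream, in the two preliminary facts you treat as ``standard'': (i) that $\|U_{p,n}\|_\infty^{p-1}$ is bounded above, and (ii) that the $C^2$ limit $w^*$ is sign-changing. Your energy comparison gives, after dividing by $(p+1)/(p-1)$,
\[
\log\mu_p+\tfrac{p-1}{p+1}\log\!\int_B|x|^\a|w_p|^{p+1}\;\le\;\log\mu^{(i)}+o(1),
\]
which yields $\limsup\mu_p\le\mu^{(i)}$ only if $\int_B|x|^\a|w_p|^{p+1}$ is uniformly bounded below, or at least does not decay like $e^{-c/(p-1)}$. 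But no two-dimensional Sobolev embedding gives such a bound for the \emph{rescaled} function $w_p=U_{p,n}/\|U_{p,n}\|_\infty$: the standard Nehari lower bound controls $\|\nabla U_{p,n}\|_2$, not $\int|x|^\a|w_p|^{p+1}$, and in fact $\int|x|^\a|w_p|^{p+1}\to 0$ is exactly the scenario in which the solution concentrates and $\mu_p\to\infty$. You would be using the conclusion to justify a hypothesis. The same circularity hits (ii): passing to the limit in $\int|\nabla w_p^\pm|^2=\mu_p\int|x|^\a|w_p^\pm|^{p+1}$ is consistent with $w^{*\pm}\equiv 0$, so it does not by itself exclude convergence to the first (radial, positive) eigenfunction.

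The paper closes both holes with a single tool you did not invoke: the fact that $U_{p,n}$ has $n$-Morse index exactly $2$ (Lemma~\ref{n-mi-unod}). Proposition~\ref{prop:10.4} establishes the $L^\infty$ bound by a blow-up on the circular sector $S_n$: if $\|U_{p,n}\|_\infty^{p-1}\to\infty$, a careful case analysis on where the maximum point accumulates (interior, $\Gamma_1$, symmetry edges $\Gamma_2,\Gamma_3$, corners, or origin) produces a nontrivial entire limit of $-\Delta\widetilde u=|x|^\a\widetilde u$ on $\R^2$, a half-plane, or a sector, and Lemma~\ref{morse-limite-infinito} then manufactures arbitrarily many admissible test functions making the quadratic form negative, contradicting the Morse index bound. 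The Morse index is used again in Step~1 of the proof of Theorem~\ref{n-asympt} to exclude convergence to the first eigenfunction: if $\bar U_{p,n}\to\omega_{0,1}$ then for $p$ close to $1$ the $n$-Morse index of $U_{p,n}$ would be at most~$1$. Without the Morse-index machinery (or some substitute), your argument does not get off the ground, so I would call this a genuine gap rather than a cosmetic omission.
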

Here $\beta\approx 2,\!305$ is the same number as in \eqref{morsep=1-N=2}.
\\
The proof of Theorem \ref{n-asympt}, reported in  Subsection \ref{ss:sym-asympt}, is quite long and involved.
First, by a refined blow-up technique relying on the Morse index in the space $H^1_{0,n}$, we establish an estimate which ensures that $U_{p,n}$ converges to an eigenfunction of \eqref{prima-autof-weight}.
Next, taking advantage by the minimality of $U_{p,n}$, we see that its limit  must be the second eigenfunctions of  \eqref{prima-autof-weight} in the space $H^1_{0,n}$. Here is the point where the number $\frac{2+\a}{2}\beta$ comes into play, because it is the threeshold under which the second eigenfunction in $H^1_{0,n}$ is nonradial.

Starting from the asymptotic description in Theorem \ref{n-asympt} we can see that, for $p$ close to one, $U_{p,n}$ are nonradial and distinct for $n=1, \dots \left\lceil\frac{2+\a}{2}\beta -1 \right\rceil$, while they coincide with the radial nodal least energy solution $u^\ast_p$ for $n > \frac{2+\a}{2}\beta$, thus obtaining the following multiplicity result.

	\begin{theorem}\label{teo:existence-N=2} 
		In dimension $N=2$ there exists $\bar p = \bar p(\alpha)> 1$ such that  \eqref{H} has $\left\lceil\frac{2+\a}{2}\beta-1\right\rceil $ distinct nodal nonradial solutions for every $p\in (1,\bar p(\alpha))$. 	
	\end{theorem}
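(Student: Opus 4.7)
The plan is to use the $n$-invariant least energy nodal solutions $U_{p,n}$, letting $n$ range over $\{1,2,\ldots,M\}$ with $M:=\lceil\tfrac{2+\alpha}{2}\beta-1\rceil$, as the $M$ required distinct nonradial solutions. A brief case check (depending on whether $\tfrac{2+\alpha}{2}\beta$ is an integer or not) shows that $n\in\{1,\ldots,M\}$ is equivalent to the analytic condition $1\le n<\tfrac{2+\alpha}{2}\beta$, so that Theorem~\ref{n-asympt} applies in its first regime to each such $U_{p,n}$ and yields
\[
\frac{U_{p,n}(r,\theta)}{\|U_{p,n}\|_\infty}\ \longrightarrow\ \Psi_n(r,\theta):=\pm\frac{1}{\|\jcal_{2n/(2+\alpha)}\|_\infty}\jcal_{2n/(2+\alpha)}\!\Bigl(z_1\!\bigl(\tfrac{2n}{2+\alpha}\bigr)r^{(2+\alpha)/2}\Bigr)\cos(n\theta)
\]
as $p\to 1$.

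Since $n\ge 1$, the angular factor $\cos(n\theta)$ is nonconstant, so $\Psi_n$ is nonradial; by the convergence above, $U_{p,n}$ itself is nonradial for $p$ sufficiently close to $1$ (otherwise a subsequence of radial $U_{p,n}$ would have a radial limit, contradicting the shape of $\Psi_n$). For distinctness, note that if $n\ne n'$ in $\{1,\ldots,M\}$ then the angular factors $\cos(n\theta)$ and $\cos(n'\theta)$ are $L^2$-orthogonal on the circle, so $\Psi_n\ne\pm\Psi_{n'}$; hence $U_{p,n}\ne U_{p,n'}$ for $p$ close enough to $1$, again by passing to the limit. Taking $\bar p(\alpha)$ to be the minimum over $n$ and over ordered pairs $(n,n')$ of the finitely many thresholds produced by these two arguments delivers the theorem.

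All the real work lives in Theorem~\ref{n-asympt}, whose proof (the refined blow-up analysis and the identification of the limiting second eigenfunction of \eqref{prima-autof-weight} in $H^1_{0,n}$) is the genuinely difficult part and the main obstacle. Given that tool, the present statement is essentially a bookkeeping exercise: a passage to the limit in finitely many inequalities, with the only mildly subtle point being the correct translation between the integer range $1\le n\le M$ and the strict inequality $n<\tfrac{2+\alpha}{2}\beta$, which has to be verified separately for non-integer and integer values of $\tfrac{2+\alpha}{2}\beta$.
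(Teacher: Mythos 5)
Your argument is correct and reaches the same conclusion, but differs from the paper in one step. The paper establishes nonradiality of $U_{p,n}$ by citing Corollary~\ref{unod-nonradial}, which is a Morse-index comparison: the $n$-Morse index of the radial least-energy nodal solution $u^*_p$ exceeds $2$ for $p$ close to $1$ whenever $n<\frac{2+\alpha}{2}\beta$ (by Lemma~\ref{n-mi-2} and Proposition~\ref{formulainutile}), while $U_{p,n}$ has $n$-Morse index exactly $2$ by Lemma~\ref{n-mi-unod}, so $U_{p,n}\ne u^*_p$. You instead read nonradiality off the limit profile of Theorem~\ref{n-asympt} directly, observing that the angular factor $\cos(n\theta)$ with $n\ge 1$ forces the limit of $U_{p,n}/\|U_{p,n}\|_\infty$ to be nonradial, so $U_{p,n}$ itself is nonradial for $p$ close to $1$. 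Both are sound; the Morse-index route is logically independent of the blow-up analysis behind Theorem~\ref{n-asympt} and is available earlier in the development, while yours comes for free once Theorem~\ref{n-asympt} is in hand. The distinctness step (different $n$ gives normalized limits supported on different angular Fourier modes, so $U_{p,n}\ne U_{p,n'}$ for $p$ close to $1$, and in fact they cannot even be related by rotation or sign) is the same as the paper's. Your translation of the integer range $1\le n\le\lceil\frac{2+\alpha}{2}\beta-1\rceil$ into the strict inequality $n<\frac{2+\alpha}{2}\beta$ is correct and properly excludes the borderline case $n=\frac{2+\alpha}{2}\beta$, which Theorem~\ref{n-asympt} and Remark~\ref{3.6} leave undetermined.
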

The solutions are distinct meaning that they cannot be obtained from each other by reflection or rotation, and of course they are  not one the opposite of the other. 
When $\a=0$  Theorem \ref{teo:existence-N=2} provides $2$ solutions and gives back the multiplicity result in \cite{GI}, from which the present one borrows many ideas.
\\
Finally we compare Theorem \ref{teo:existence-N=2} with \cite[Theorem 1.6]{AG-N=2}, concerning large values of $p$.
In that range the $n$-invariant least energy nodal solutions $U_{p,n}$ are nonradial for every $n=1, \dots \left\lceil\frac{2+\a}2 {\kappa}-1\right\rceil $, where 	$\kappa \approx 5.1869$ is another fixed number related to the Morse index of $u^{\ast}_p$ for large values of $p$. Further they are distinct one from another thanks to a monotonicity result in \cite{G-19}. 
Since $\kappa>\beta+2$, we see that  for $n=\left[\frac{2+\a}{2}\beta +1\right]  \dots \left\lceil \frac{2+\a}{2}\kappa-1\right\rceil$ the curve  $p\mapsto  U_{p,n}$ coincide with the one of radial least energy nodial solutions $p\mapsto u^\ast_{p}$ for $p$ under a certain value $p_n$, and then it bifurcates becoming nonradial.
We conjecture that, specularly, the least energy solutions $U_{p,n}$ are nonradial (respectively, radial) for every values of $p>1$
when $n \le \left\lceil\frac{2+\a}{2}\beta-1\right\rceil $ (respectively, $n\ge \left\lceil\frac{2+\a}2 {\kappa}\right\rceil$).

\section{Preliminary remarks on the limit  problem}\label{S.0}

The profile of solutions to the H\'enon problem 
\[
\left\{\begin{array}{ll}
-\Delta u = |x|^{\alpha}|u|^{p-1} u \qquad & \text{ in } B, \\
u= 0 & \text{ on } \partial B,
\end{array} \right.
\tag{\ref{H}}\]
is related to  a weighted eigenvalue problem for the Laplacian, namely
\[
\left\{
\begin{array}{ll}
-\Delta \omega= \mu |x|^{\a} \omega & \text{ in } B, \\
\omega = 0 & \text{ on } \partial B .  
\end{array}\right.
\tag{\ref{prima-autof-weight}}\]
Indeed it is easy to prove the following general fact.

\begin{lemma}\label{lem:conv-eigenv}
	Let $p_n\to 1$ and $u_n$ any nontrivial solution to \eqref{H} with $p$ replaced by $p_n$. If $\|u_n\|_{\infty}^{p_n-1}\le C$, then there exists an eigenvalue $\mu$ of \eqref{prima-autof-weight} with eigenfunction $\omega$ such that $\|\omega\|_{\infty}=1$ and  (up to an extracted sequence) 
	\begin{equation}\label{limite}
	\|u_n\|_{\infty}^{p_n-1}\longrightarrow \mu \quad  \text{ and } \quad  \frac{u_n}{\| u_n\|_{\infty}} \longrightarrow \omega  \ \text{  in $C^2(B)$ and in $C(\bar B)$.}
	\end{equation}
	\end{lemma}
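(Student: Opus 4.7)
My plan is to normalize the solutions by their sup-norm and pass to the limit in the rescaled equation. Set $v_n := u_n/\|u_n\|_{\infty}$ and $\mu_n := \|u_n\|_{\infty}^{p_n-1}$, so that $\|v_n\|_{\infty} = 1$ and
\[-\Delta v_n = \mu_n |x|^\alpha |v_n|^{p_n-1} v_n \quad \text{in } B, \qquad v_n = 0 \quad \text{on } \partial B.\]
Under the assumption $\mu_n \le C$ and the pointwise bound $|v_n| \le 1$, the right-hand side is uniformly bounded in $L^\infty(B)$.

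For compactness, I would invoke Calder\'on--Zygmund estimates to get uniform bounds for $v_n$ in $W^{2,q}(B)$ for every $q < \infty$, hence (by Morrey embedding) a uniform bound in $C^{1,\gamma}(\bar B)$ for every $\gamma \in (0,1)$. After extracting a subsequence, $\mu_n \to \mu \in [0, C]$ and $v_n \to v$ in $C^1(\bar B)$; the limit $v$ satisfies $\|v\|_{\infty} = 1$ (so $v \not\equiv 0$) and $v|_{\partial B} = 0$.

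To identify the limit, the key elementary fact is that $|t|^{p_n-1} t \to t$ uniformly on $[-1, 1]$: split between $|t|\le \delta$, where the difference is bounded by $2\delta$, and $|t| \ge \delta$, where $|t|^{p_n-1} \to 1$ uniformly. Combined with $v_n \to v$ in $C(\bar B)$, this yields $|v_n|^{p_n-1} v_n \to v$ uniformly on $\bar B$; since $|x|^\alpha \in C(\bar B)$, the right-hand sides converge to $\mu |x|^\alpha v$ in $C(\bar B)$, and passing to the weak limit in the equation gives $-\Delta v = \mu |x|^\alpha v$ in $B$ with $v|_{\partial B} = 0$. Nontriviality of $v$ forces $\mu > 0$ (otherwise $v$ would be harmonic with zero trace), so $\omega := v$ is an eigenfunction of \eqref{prima-autof-weight} with eigenvalue $\mu$. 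To upgrade the convergence to $C^2(B)$, I would note that $t \mapsto |t|^{p_n-1} t$ is uniformly Lipschitz on $[-1,1]$ (its derivative $p_n |t|^{p_n-1} \le p_n \le 2$) and $|x|^\alpha \in C^{0,\min(\alpha,1)}(\bar B)$, so $g_n := \mu_n |x|^\alpha |v_n|^{p_n-1} v_n$ is uniformly bounded in $C^{0,\gamma}$ and converges uniformly to $g := \mu |x|^\alpha v$; interior Schauder estimates applied to $-\Delta(v_n - v) = g_n - g$ then give convergence in $C^2_{\text{loc}}(B)$.

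The only mild technical point is the uniform convergence $|v_n|^{p_n-1} v_n \to v$: although $|t|^{p_n-1}$ does not converge uniformly to $1$ near $t = 0$, the extra factor $v_n$ vanishes exactly where the power is singular, which is what saves the argument. Everything else is standard elliptic regularity plus the assumed $L^\infty$ control on the nonlinearity.
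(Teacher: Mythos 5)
Your proof is correct and follows essentially the same strategy as the paper: normalize by the sup-norm, use the assumed bound on $\mu_n=\|u_n\|_\infty^{p_n-1}$ together with $|v_n|\le 1$ to get uniform elliptic estimates, extract a convergent subsequence, and pass to the limit in the rescaled equation. The only differences are cosmetic: you prove the key uniform convergence $|v_n|^{p_n-1}v_n\to v$ by a direct $\delta$-split on $[-1,1]$, whereas the paper uses the identity $a^s-1=s\log a\int_0^1 a^{ts}\,dt$ (which it reuses in the subsequent corollary); and you deduce $\mu>0$ by noting that $\mu=0$ would force the limit $v$ to be harmonic with zero trace, while the paper argues directly on $\bar u_n$ via a maximum-principle comparison with $(-\Delta)^{-1}(1)$. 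Both routes are sound, and your explicit Schauder step for the $C^2_{\text{loc}}$ upgrade is a correct (if slightly more detailed) version of the paper's appeal to ``ellipticity''.
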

\begin{proof}
Certainly $\|u_n\|_{\infty}^{p_n-1}$ converges to a nonnegative number, say it $\mu$, up to an extracted sequence.
Next $\bar{u}_n(x)=\frac {u_n(x)}{\| u_n\|_{\infty}}$ satisfies
\begin{equation}\label{nonumber}
\begin{cases}
-\Delta \bar{u}_n=|x|^{\alpha}\| u_n\|_{\infty}^{p_n-1}|\bar{u}_n|^{p_n-1}\bar{u}_n & \hbox{ in }B,\\
\bar{u}_n=0 & \hbox{ on }\partial B.
\end{cases}
\end{equation}
and is nontrivial since $\|\bar u_n\|_{\infty}=1$. Hence $\|u_n\|_{\infty}^{p_n-1}$ cannot vanish (and so $\mu> 0$) because by maximum principle
\[\bar u_n = (-\Delta)^{-1} |x|^{\alpha}\| u_n\|_{\infty}^{p_n-1}|\bar{u}_n|^{p_n-1}\bar{u}_n \le \| u_n\|_{\infty}^{p_n-1} (-\Delta)^{-1} (1) .\]

Moreover 
\begin{equation}
\left(|\bar{u}_n|^{p_n-1}-1\right)\bar{u}_n \to 0 \quad  \text{uniformly.}
\end{equation}
Indeed for any fixed $n$ we have $\left(|\bar{u}_n|^{p_n-1}-1\right)\bar{u}_n =0$ if $\bar{u}_n= 0$, otherwise from the equality
\begin{align}\label{per-dopo}
a^s-1= & s \log a \int_0^1a^{t s} dt \quad \text{ as } a>0, \ s \in \R,
\end{align}
we deduce that 
\begin{align*}
\left|\left(|\bar{u}_n|^{p_n-1} -1\right)\bar{u}_n\right|\le& (p_n-1)\left|\log|\bar{u}_n| \int_0^1|\bar{u}_n|^{1+t(p_n-1)} dt \right| \\  \le& c (p_n-1) |\bar{u}_n|^{1/2} \le   c (p_n-1) .
\end{align*}
So $\bar u_n$ converges weakly  to a function $\omega$  that solves \eqref{prima-autof-weight} for $\mu = \lim \| u_n\|_{\infty}^{p_n-1}$, and by ellipticity $\bar u_n\to \omega$ in $C^2(B)$ and uniformly on $\overline B$. From this it also follows that $\|\omega\|_{\infty}=1$, concluding the proof of \eqref{limite}. 
\end{proof}

It is not hard to obtain a better asymptotic description which shall be of use later on.

\begin{corollary}\label{cor}
Let $p_n\to 1$, $u_n$, $\mu$ and $\omega$  as in the previous Lemma, and define
\begin{equation}\label{def:c}
c=\dfrac{-\int_B|x|^{\a} \log| \omega| \, {\omega}^2 dx}{\int_B|x|^{\a} \omega^2 dx}.\end{equation}
Then as $n\to \infty$ we have
	\begin{align}\label{weak-nod}
	\|u_n\|_{\infty}^{p_n-1} & = \mu \left( 1 + c (p_n-1) \right) + o(p_n-1) , \\
	\label{point-limit}
	\mu^{-\frac{1}{p_n-1}} u_n & \longrightarrow e^{c} \omega  \quad \text{ in } C(\bar B).
	\end{align}
	\end{corollary}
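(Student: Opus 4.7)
My plan is to derive the sharp expansion from a single Green-type identity obtained by testing the equations for $\bar u_n := u_n/\|u_n\|_\infty$ and for $\omega$ against one another. Writing the two equations
\[-\Delta \bar u_n = \|u_n\|_\infty^{p_n-1}|x|^\a |\bar u_n|^{p_n-1}\bar u_n,\qquad -\Delta\omega = \mu|x|^\a\omega,\]
multiplying the first by $\omega$, the second by $\bar u_n$, integrating over $B$ and exploiting the common Dirichlet condition yields the exact identity
\[\|u_n\|_\infty^{p_n-1}\int_B |x|^\a |\bar u_n|^{p_n-1}\bar u_n\,\omega\,dx = \mu\int_B|x|^\a \bar u_n\,\omega\,dx,\]
so the whole question reduces to a first-order expansion of the left-hand side.

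Then I would insert the identity \eqref{per-dopo} with $a=|\bar u_n|$ and $s=p_n-1$, which converts the left-hand integral into
\[\int_B |x|^\a \bar u_n\omega\,dx + (p_n-1)\int_B |x|^\a \bar u_n\log|\bar u_n|\,\omega\,\Bigl(\int_0^1|\bar u_n|^{t(p_n-1)}dt\Bigr)dx.\]
The $C(\bar B)$ convergence $\bar u_n\to\omega$ (established in Lemma \ref{lem:conv-eigenv}) together with the elementary bounds $|\bar u_n|\le 1$ and $|\bar u_n\log|\bar u_n||\le e^{-1}$ provides a dominating constant, so dominated convergence gives
\[\int_B |x|^\a|\bar u_n|^{p_n-1}\bar u_n\,\omega\,dx = \int_B |x|^\a\omega^2\,dx +(p_n-1)\int_B|x|^\a\omega^2\log|\omega|\,dx + o(p_n-1).\]
Combining with the analogous (but trivial) limit $\int|x|^\a\bar u_n\omega\to\int|x|^\a\omega^2$ in the numerator, inverting the ratio via $(1+x)^{-1}=1-x+o(x)$, and recalling the definition \eqref{def:c} of $c$, one obtains exactly \eqref{weak-nod}.

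For \eqref{point-limit} I would simply take logarithms of \eqref{weak-nod}. Since $\log(1+c(p_n-1)+o(p_n-1))=c(p_n-1)+o(p_n-1)$, dividing by $p_n-1$ yields
\[\log\|u_n\|_\infty = \frac{\log\mu}{p_n-1}+c+o(1),\]
hence $\mu^{-1/(p_n-1)}\|u_n\|_\infty\to e^{c}$; multiplying by the uniform convergence $\bar u_n\to \omega$ in $C(\bar B)$ gives $\mu^{-1/(p_n-1)}u_n\to e^{c}\omega$ uniformly.

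The one delicate point is the integrability of $\omega^2\log|\omega|$ and, more subtly, the legitimacy of dominated convergence for the term $\bar u_n\log|\bar u_n|\,\omega$ near the nodal set of $\omega$. This is handled by the elementary inequality $|t\log t|\le e^{-1}$ for $t\in[0,1]$ applied to $t=|\bar u_n|$, which furnishes an $n$-independent $L^\infty$ bound on $\bar u_n\log|\bar u_n|$; together with the uniform bound $|\bar u_n|^{t(p_n-1)}\le 1$ and the $C^2$ regularity of $\omega$, this gives a dominating integrable majorant on all of $B$ and thus justifies the passage to the limit without any need to avoid the nodal set.
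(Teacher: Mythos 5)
Your proposal follows essentially the same route as the paper: test the equation for $u_n$ against $\omega$ and vice versa to get the exact identity $\|u_n\|_\infty^{p_n-1}\int_B|x|^\a|\bar u_n|^{p_n-1}\bar u_n\omega=\mu\int_B|x|^\a\bar u_n\omega$, expand $|\bar u_n|^{p_n-1}$ via identity \eqref{per-dopo}, pass to the limit by dominated convergence using the $|s\log s|$ bound, and deduce \eqref{point-limit} from \eqref{weak-nod} via $(1+c(p_n-1)+o(p_n-1))^{1/(p_n-1)}\to e^c$ (whether one takes logarithms first, as you do, or raises to the power directly, as the paper does, is cosmetic).

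One point of precision deserves mention. Your intermediate display
\[
\int_B |x|^\a|\bar u_n|^{p_n-1}\bar u_n\,\omega\,dx = \int_B |x|^\a\omega^2\,dx +(p_n-1)\int_B|x|^\a\omega^2\log|\omega|\,dx + o(p_n-1)
\]
is not literally true: the zeroth-order term should be $\int_B|x|^\a\bar u_n\omega\,dx$, and you have no control on the rate $\int_B|x|^\a\bar u_n\omega\,dx-\int_B|x|^\a\omega^2\,dx=o(1)$, so you cannot absorb that difference into $o(p_n-1)$. The argument nevertheless works because this quantity appears identically on the right-hand side of the Green identity and cancels when you form the ratio $R_n/L_n$: writing $L_n=R_n+(p_n-1)K_n$ with $R_n=\int_B|x|^\a\bar u_n\omega$ and $K_n\to\int_B|x|^\a\omega^2\log|\omega|$, one has $\|u_n\|_\infty^{p_n-1}=\mu/(1+(p_n-1)K_n/R_n)$ and the limit of the quotient $K_n/R_n$ (not of $K_n$ and $R_n$ separately at first order) is all that is needed. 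This is exactly how the paper organizes the computation, keeping $L_n$ in the denominator rather than replacing it by its limit too early. Your DCT justification and the integrability remark at the end are correct and match the paper's.
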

This facts have been proved in \cite{GI} in some particular cases, but their proof still work in wide generality. We report it here for the reader convenience.
\begin{proof}
	To obtain \eqref{weak-nod} we compute
	\begin{align*}
	\| u_n\|_{\infty}^{p_n-1}	\int_B |x|^{\a}|\bar{u}_n|^{p_n-1}\bar{u}_n  \omega dx  & = \frac{1}{	\| u_n\|_{\infty}}\int_B |x|^{\a}|{u}_n|^{p_n-1}{u}_n \omega dx =  \frac{1}{	\| u_n\|_{\infty}}\int_B \nabla u_n \nabla \omega dx 
	\intertext{because $u_n$ solves \eqref{H}. Next using that $\omega$ solves \eqref{prima-autof-weight} we end up with}
	& =   \frac{\mu}{	\| u_n\|_{\infty}}\int_B |x|^{\a} u_n \omega dx = \mu \int_B |x|^{\a} \bar u_n \omega dx .
	\end{align*}
	Hence
	\begin{align*}
	\left(\| u_n\|_{\infty}^{p_n-1}-\mu \right) \int_B |x|^{\a} |\bar{u}_n|^{p_n-1} \bar u_n  \omega dx = \mu \int_B |x|^{\a} \left( 1 - |\bar{u}_n|^{p_n-1}\right)  \bar u_n \omega dx
\\ 
	\underset{\eqref{per-dopo}}{=}  -\mu  (p_n-1) \int_B |x|^{\a} \log|\bar u_n| \int_0^1 |\bar u_n|^{t(p_n-1)} dt  \, \bar u_n \omega dx .
	\end{align*}
	Summing up we have 
	\begin{align*}
	\frac{\| u_n\|_{\infty}^{p_n-1}-\mu}{\mu (p_n-1)} = \frac{- \int_B |x|^{\a} \log|\bar u_n| \int_0^1 |\bar u_n|^{t(p_n-1)} dt  \, \bar u_n \omega dx }{\int_B |x|^{\a} |\bar{u}_n|^{p_n-1} \bar u_n  \omega dx }  \to c
	\end{align*}
	as $n\to \infty$.
	Indeed the uniform convergence of $\bar u_n$ yields that \[\int_B |x|^{\a} |\bar{u}_n|^{p_n-1}u_n \omega dx \to \int_B |x|^{\a}{ \omega}^2 dx.\] Further 
	$\left| \log|\bar u_n| \int_0^1 |\bar u_n|^{t(p_n-1)} dt  \, \bar u_n\right| \le \sup\limits_{s\in(0,1]} |s\, \log s | < \infty$ and so the Dominated Convergence Theorem yields
	\[\int_B |x|^{\a} \log|\bar u_n| \int_0^1 |\bar u_n|^{t(p_n-1)} dt  \, \bar u_n \omega dx \to \int_B|x|^{\a} \log| \omega|\omega^2 dx.\]
	Eventually \eqref{point-limit} follows because
	\begin{align*}
	\mu^{-\frac{1}{p_n-1}} u_n = \left(\frac{\|u_n\|_{\infty}^{p_n-1}}{\mu}\right)^{\frac{1}{p_n-1}} \bar u_n =  \left( 1 + c (p_n-1) + o(p_n-1) \right)^{\frac{1}{p_n-1}} \bar u_n 
	\longrightarrow e^{c} \omega 
	\end{align*}
	 in $C(\bar B)$ by \eqref{limite}.
\end{proof}
Let us compute explicitly the eigenvalues and eigenfunctions of \eqref{prima-autof-weight}, which are related to the Bessel function  of first kind
\begin{equation}\label{bessel}
\jcal_{\beta}(r) = r^{\beta}\sum\limits_{k=0}^{+\infty} \dfrac{(-1)^ik}{k!\Gamma(k+1+\beta)} \left(\frac{r}{2}\right)^{2k},\end{equation}
when $\beta= \frac{\n-2+2n}{2+\alpha}$ for some $n\in \N$. Here and henceforth we write 
\begin{align}\label{z-1-beta}
	z_{i}(\beta) \text{  for the $i^{th}$ zero of $\jcal_{\beta}$, as $i\in\N$, $i\ge 1$,} \end{align}
$\lambda_n= n(N-2+n)$  for the sequence of the eigenvalues of the Laplace Beltrami operator on ${\mathbb S}_{N-1}$,  $N_n =\frac{(N+2j-2)(N+n-3)!}{(N-2)!n!}$ for its multiplicity, and $Y_{n,j}$ for the eigenfunctions of the Laplace Beltrami operator on ${\mathbb S}_{N-1}$, i.e. the spherical harmonics, as $n,j\in\N$, $j= 1, \dots N_n$.

\begin{lemma}\label{lem:autov-peso}
	The eigenvalues  of \eqref{prima-autof-weight} are 
	\begin{align} \label{autov-peso}
\mu_{n,i}=& \left(\frac{2+\alpha}{2} \, z_{i}\left(\frac{\n-2+2n}{2+\alpha}\right)\right)^2 
\intertext{and the related eigenfunctions are } \label{autof-peso}
\omega_{n,i}(x)= &  |x|^{-\frac{\n-2}{2}}\jcal_{\frac{\n-2+2n}{2+\alpha}}\left(z_{i}\left(\frac{\n-2+2n}{2+\alpha}\right)|x|^{\frac{2+\alpha}{2}}\right) Y_{n,j} \left(\frac{x}{|x|}\right) .
\end{align}
\end{lemma}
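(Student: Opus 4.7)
The plan is to separate variables and reduce the radial part of the problem to a Bessel equation. Expanding any eigenfunction in spherical harmonics, $\omega(r\theta) = \sum_{n\ge 0}\sum_{j=1}^{N_n}\phi_{n,j}(r)Y_{n,j}(\theta)$ (which is legitimate since $\{Y_{n,j}\}$ is a complete orthonormal basis of $L^2({\mathbb S}_{N-1})$), and using the polar decomposition $\Delta = \partial_r^2 + \frac{N-1}{r}\partial_r + \frac{1}{r^2}\Delta_{{\mathbb S}_{N-1}}$ together with $-\Delta_{{\mathbb S}_{N-1}} Y_{n,j} = \lambda_n Y_{n,j}$, the eigenvalue problem \eqref{prima-autof-weight} decouples into the family of radial ODEs
\[
-\phi'' - \frac{N-1}{r}\phi' + \frac{\lambda_n}{r^2}\phi = \mu\, r^{\alpha}\phi \quad \text{on } (0,1),
\]
subject to $\phi(1)=0$ and to a regularity requirement at $r=0$ coming from $\omega\in H^1_0(B)$.

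To recognize this as a Bessel equation I would perform the substitution $\phi(r) = r^{-(N-2)/2}g(t)$ combined with the self-similar rescaling $t = \frac{2\sqrt{\mu}}{2+\alpha}\, r^{(2+\alpha)/2}$. A direct computation shows that the power $-(N-2)/2$ is tuned to kill the first-order term, while the exponent $(2+\alpha)/2$ is chosen exactly so that the weight $r^{\alpha}$ becomes a constant. The equation then reduces to
\[
g''(t) + \frac{1}{t}g'(t) + \Bigl(1 - \frac{\nu^2}{t^2}\Bigr)g(t) = 0,
\qquad \nu = \frac{N-2+2n}{2+\alpha},
\]
the Bessel index being forced by the algebraic identity $\lambda_n + \frac{(N-2)^2}{4} = \bigl(n+\frac{N-2}{2}\bigr)^2$.

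Among the two independent Bessel solutions, only $g = \jcal_\nu$ yields an admissible $\phi$: since $\jcal_{\nu}(t)\sim t^{\nu}$ near $0$, one has $\phi(r)\sim r^{n}$, which is smooth at the origin, whereas the companion solution produces a $\phi$ outside $H^1_0(B)$. Imposing the boundary condition $\phi(1)=0$ then forces $\jcal_{\nu}\bigl(\tfrac{2\sqrt{\mu}}{2+\alpha}\bigr)=0$, i.e.\ $\frac{2\sqrt{\mu}}{2+\alpha}=z_{i}(\nu)$ for some $i\ge 1$, which gives \eqref{autov-peso}. Undoing the change of variables produces \eqref{autof-peso}, and completeness of the list $\{\mu_{n,i}\}$ is inherited from the completeness of $\{Y_{n,j}\}$ in $L^2({\mathbb S}_{N-1})$.

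The main difficulty is purely the bookkeeping in the substitution step; there are no analytic obstacles, since the problem is a standard self-adjoint eigenvalue problem with compact resolvent on the weighted space $L^2(B,|x|^{\alpha}\,dx)$.
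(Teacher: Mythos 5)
Your proof is correct and follows essentially the same route as the paper: expand in spherical harmonics, perform the power change of variable $t\sim r^{(2+\alpha)/2}$ together with the factor $r^{-(N-2)/2}$ to reach the Bessel equation of index $\frac{N-2+2n}{2+\alpha}$, discard the second-kind Bessel solution by regularity at the origin, and impose the zero condition at $r=1$. The only cosmetic difference is that the paper carries out the substitution in two stages (first $t=r^{(2+\alpha)/2}$, then $\hat\phi(t)=t^{(N-2)/(2+\alpha)}\phi(t)$) whereas you combine them into one.
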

\begin{proof}
Let $\omega\in H^1_0(B)$ solve the equation in \eqref{prima-autof-weight}. We decompose it along the spherical harmonics and write
\begin{equation}\label{autovpeso-decomposition}
\omega(x)= \sum\limits_{n=0}^{\infty}\sum\limits_{j=1}^{N_n} \psi_{n,j}(|x|) \, Y_{n,j} \left(\frac{x}{|x|}\right) .
\end{equation}
An easy computation shows that $\psi_{n,j}$ are characterized by
\begin{equation}\label{2-app}
\begin{cases}
-\left(r^{N-1}\psi'\right)' = r^{N-1} \left( r^{\a}\mu -\frac{\lambda_n}{r^2} \right) \psi \ & \text{ for } 0<r< 1 , \\
\psi \in H^1_{0,\rad}(B)  &   \mbox{ and also } \\
{\psi}/{|x|} \in L^2(B) & \mbox{ if } n\ge 1. 
\end{cases}
\end{equation}
	Next we perform the change of variable 
	\[ t=r^{\frac{2+\alpha}{2}} , \qquad \phi(t) =  \psi(r) .\] 
The function  $\psi$ solves the equation in \eqref{2-app} if and only if 
	\begin{equation}\label{c1}
	t^2\phi^{\prime\prime}+ \frac{2N-2+\a}{2+\a} t  \phi^{\prime}+ \left(\frac{2}{2+\a}\right)^2\left(\mu  t^2-\lambda_n\right)\phi  =0  \quad  \text{ for }  0< t<1.
	\end{equation}
	If $N=2$  \eqref{c1} is a Bessel equation, otherwise is sufficient to perform a 
	further transformation, namely 
	\[ \hat\phi(t)=t^{\frac{N-2}{2+\a}} \phi(t),\] 
	to obtain the  Bessel equation
	\begin{equation}\label{c2}
	t^2{\hat \phi}^{\prime\prime}+t{\hat\phi}^{\prime}+\left(\left(\frac{2\sqrt{\mu}}{2+\a}\right)^2 t^2- \left(\frac{N-2+2n}{2+\a}\right)^2 \right) \hat\omega=0.
	\end{equation}
	Here we have also used the explicit value  $\l_n= n(N-2+n)$.
The solutions of \eqref{c2} are   linear combinations of the Bessel functions of first and second kind, precisely
\[\hat\phi(t)= C_1 \jcal_{\frac{N-2+2n}{2+\a}}\left(\frac{2\sqrt{\mu} \, t}{2+\a}\right)+ C_2{\mathcal Y}_{\frac{N-2+2n}{2+\a}}\left(\frac{2\sqrt{\mu} \, t}{2+\a}\right) . \]
Coming back to $\psi(r)= r^{-\frac{N-2}{2}} \hat\phi(r^{\frac{2+\a}{2}})$ and imposing that $\psi\in H^1_{0,\rad}(B)$ one sees  that  the  coefficient $C_2$ must be zero, and the condition $\psi(1)=0$ yields  that  $\frac{2\sqrt{\mu}}{2+\a}$ is a zero of the Bessel function $\jcal_{\frac{N-2+2n}{2+\a}}$, that is \eqref{autov-peso}.
	Eventually 
	\[ \psi(r)=  C r^{-\frac{N-2}{2}}\jcal_{\frac{N-2+2n}{2+\a}}\left( z_{n,i}  \, r^{\frac{2+\alpha}{2}}\right) ,\]
	and the decomposition \eqref{autovpeso-decomposition} yields \eqref{autof-peso}. 
\end{proof}

\begin{remark}\label{rem:autov-rad}
	The same arguments of the proof of Lemma \ref{lem:autov-peso} show that the radial eigenvalues of  \eqref{prima-autof-weight}, i.e. the eigenvalues whose corresponding eigenfunctions belong to $H^1_{\rad}(B)$, are
		\begin{align} \label{autov-rad-peso}
	\mu_{0,i}&= \left(\frac{2+\alpha}{2} \, z_{i}\!\left(\frac{N-2}{2+\a}\right)\right)^2 .
	\intertext{Each of them has only one radial eigenfunction (up to a multiplicative constant) given by } \label{autof-rad-peso}
	\omega_{0,i}(r) & =   r^{-\frac{\n-2}{2}}\jcal_{\frac{\n-2}{2+\alpha}}\left(z_{i}\!\left(\frac{N-2}{2+\a}\right)r ^{\frac{2+\alpha}{2}}\right)  .
	\end{align}
	\end{remark}

\section{Radial solutions} \label{S:1}

In  this section we deal with radial solutions,  for which a more detailed description of both the asymptotic profile and the Morse index can be given.

We consider the number of nodal zones $m$ as fixed, and write $u_p$ for the radial solution to \eqref{H} with $m$ nodal zones. It is unique up to the sign (see \cite{NN}) and to fix idea we shall take that $u_p(0)>0$. 
We denote by $0<r_{1,p}< \dots r_{m,p}=1$ the nodal radii of $u_p$, so that $u_p(r_{i,p})=0$ as $i=1,\dots m$.
It is not hard to see by ODE techniques that $u_p$ has only one critical point in any nodal interval $A_1=[0,r_{1,p})$ or $A_i=(r_{i-1,p}, r_{i,p})$ if $i=2,\dots m$, which shall be denoted by $s_{i-1,p}$ henceforth. Moreover $s_{0,p}=0$ is the global maximum point and 
$u_p(0)>-u_p(s_{1,p})>u_p(s_{2,p})>\dots (-1)^{m-1} u_p(s_{m-1,p})$. We refer to \cite[Proposition 4.1]{AG-sez2-bis} for a detailed proof.
\\
Here we see that when $p$ approaches $1$, $\|u_p\|^{p-1}$ stays bounded , none of the  nodal zones disappears 
and a suitable rescaling of $u_{p}$ converges to the $m^{th}$ radial eigenfuntion of \eqref{prima-autof-weight}, which gives Theorem \ref{p1}.  

\begin{proof}[Proof of Theorem \ref{p1}]
Let us check first that $\|u_p\|_{\infty}^{p-1}=|u_p(0)|^{p-1}$ is bounded for $p$ close to $1$.
	If not there exists a sequence $p_n\to 1$ such that
	\[ \tau_n= \|u_{p_n}\|_{\infty}^{\frac{p_n-1}{2+\alpha}}  \to \infty \quad \text{ as } \, n\to +\infty,  \]
	so we look at the rescaled function
	\[U_n(x)= \dfrac{1}{\|u_{p_n}\|_{\infty}} u_{p_n}\left(\dfrac{x}{\tau_n}\right) ,  \quad \text{ as } x\in B_{\tau_n}= \{ x \in \R^N : |x| < \tau_n\}, \]
	that satisfies 
	\begin{equation}\label{risc}
	\left\{\begin{array}{ll}
	-\Delta U_n = |x|^{\alpha} |U_{n}|^{p_n-1} U_n , & \mbox{ in } B_{\tau_n} , \\
	|U_n|\le U_n(0)= 1 ,  & \\
	U_n=0, & \mbox{ on } \partial B_{\tau_n}.
	\end{array}\right.\end{equation}
	Because $|x|^{\alpha}|U_n|$ is locally bounded, the function $U_n$ converges locally uniformly in $\R^{\n}$ to a radial function $U$ which solves 
	\[
	\left\{\begin{array}{ll}
	-\Delta U = |x|^{\alpha}  U , & \mbox{ in } \R^{\n} , \\
	|U|\le U(0)= 1 .  & 
	\end{array}\right.\]
	Remark that the number of nodal zones of $U$ can not overpass the one of $U_n$, that is $m$. Indeed inside each nodal zone $U_n$ has fixed sign and converges uniformly to $U$. Therefore $U$ cannot change sign and Hopf Lemma yields that no further zero can appear.
	\\
	On the other hand $w(r)=r^{\frac{N-2}{2+\a}}U\left(\left(\frac{2r}{2+\a}\right)^{\frac{2}{2+\a}}\right)$ solves
	a Bessel equation
	\[ r^2w'' + {r} w' + \left(r^2+\left(\frac{N-2}{2+\a}\right)^2\right) w = 0,\]
	and since it is bounded near at the origin $w(r)= A  {\mathcal J}_{\frac{N-2}{2+\a}}(r)$, where ${\mathcal J}$ stands for the Bessel function of first kind.
	This is not possible because $w$ has  an infinite number of nodal zones, proving that $\| u_p\|_{\infty}^{p-1}\le C$.
	\\
	Hence Lemma \ref{lem:conv-eigenv} ensures that when $p_n\to 1$ then	
	the function $\bar{u}_n(x)=\frac {u_n(x)}{\| u_n\|_{\infty}}$ converges to an eigenfunction $\omega$ of \eqref{prima-autof-weight} related to the eigenvalue $\mu=\lim \| u_n\|_{\infty}^{p_n-1}$. Of course $\omega$ has to be radial, it remains to show that it has exactly $m$ nodal zones.
	Actually the first nodal zone, say it $B_{r_n}=\{ x \, : \, |x|<r_n\}$, can not collapse to a null set because multiplying the equation in \eqref{H} by $u_n$ and integrating on $B_{r_n}$ one sees that
	\begin{align*}
	{\int_{B_{r_n}} |\nabla  u_n|^2 dx} & = {\int_{B_{r_n}}|x|^{\alpha}  | u_n|^{p+1} dx}  \le {r_n^{\alpha} \|u_n\|_{\infty}^{p_n-1} \int_{B_{r_n}}  |u_n|^2 dx} \\
	& \le C r_n^{N+\alpha} \|u_n\|_{\infty}^{p_n-1} {\int_{B_{r_n}} |\nabla  u_n|^2 dx}
	\end{align*}
	by the Poincar\'e inequality in the ball $B_{r_n}$. Remark that it also follows that $\|u_n\|_{\infty}^{p_n-1}$ does not vanish and in particular $\mu>0$.
	\\
	The last nodal zone  can not disappear either. To see this fact we denote by $s_n$ the last zero of $u_n$, $\tau_n=\|u_n\|_{\infty}^{\frac{p_n-1}{2+\alpha}}$ as before and rule out  the occurrence $R_n:=\tau_n(1-s_n)\to 0$. 
		To this aim we look at the rescaled sequence
	\[ \zeta_n(r) = \dfrac{1}{\|u_n\|_{\infty}} \left| u_n\left(s_n+ \dfrac{r}{\tau_n}\right)\right|,  \quad \text{ as } 0< r < R_n \]
	Now $0<\zeta_n\le 1$ on $(0, R_n)$ and it  solves
	\[\begin{cases}
	-\left( (r+\tau_ns_n)^{N-1}\zeta_n'\right)'  = (r+\tau_ns_n)^{N-1+\alpha} \zeta_n^p  & 0<r<R_n, \\
	\zeta(0)=\zeta(R_n) = 0. &
	\end{cases}\]
	So, recalling that we have already proved that $\tau_n s_n\ge \delta >0$, we compute
	\begin{align*}
	\int_0^{R_n} |\zeta_n'|^2 dr &\le \frac{1}{\delta^{N-1}}\int_0^{R_n} (r+\tau_ns_n)^{N-1}|\zeta_n'|^2 dr = \frac{1}{\delta^{N-1}} \int_0^{R_n} (r+\tau_ns_n)^{N-1+\a}\zeta_n^{p_n+1} dr  \\
	&\le \frac{(R_n+s_n\tau_n)^{N-1+\a} \|\zeta_n\|_{\infty}^{p_n-1}}{\delta^{N-1}} \int_0^{R_n} \zeta_n^{2} dr
	\intertext{and using Wirtinger inequality and the boundedness of $R_n+s_n\tau_n$ and $\|\zeta_n\|_{\infty}$ we end up with}
	&\le C R_n^2 \int_0^{R_n} |\zeta_n'|^2 dr ,
	\end{align*}
	which forbids $R_n\to 0$. Similarly one can see that none of the other nodal zones can vanish, and so 
	 $\omega$ has at least $m$ nodal zones. But no more nodal zones can appear because inside each nodal zone $\omega$ is the uniform limit of  $\bar u_n$ which has fixed sign.	
	\\
	Eventually $\mu$ is the $m^{th}$ radial eigenvalue for \eqref{prima-autof-weight} and Remark \ref{rem:autov-rad} completes the proof of \eqref{a0}, \eqref{a0bis}, \eqref{a2}.
	The constant in \eqref{a0bis} comes from the condition $\omega (0)=1$.
\end{proof}

\subsection{Computation of the Morse index}\label{S:1.1}

In this subsection we prove Theorem \ref{mi-p=1}, by taking advantage of a characterization of the Morse index given in \cite{AG-sez2}.
Let us recall that the Morse index of any solution $u_p$ to \eqref{H}, that we denote by $m(u_p)$, is connected with the linearized operator 
\begin{equation}\label{Lp}	L_{u_p} \psi:=-\Delta \psi-p |x|^\a|u_p|^{p-1}\psi , \end{equation}
and the quadratic form
\begin{equation}\label{Qp}	{\mathcal Q}_{u_p} \psi = \int _B |\nabla \psi|^2-p\int_B|x|^\a|u_p|^{p-1}\psi^2, \end{equation}
and can be defined as the number, counted with multiplicity, of the negative eigenvalues of  
\begin{equation}\label{eigenvalue-problem}
\left\{\begin{array}{ll}
L_{u_p} \psi=\L_k(p)\, \psi & \text{ in } B\\
\psi\in H^1_0(B), & 
\end{array} \right.
\end{equation}
or equivalently as the maximal dimension of a subspace of $H^1_0(B)$ where ${\mathcal Q}_{u_p} $ is negative defined.
In \cite[Proposition 1.1]{AG-sez2} an alternative definition of Morse index has been given by using a singular eigenvalue problem
	\begin{equation}\label{sing-eigenvalue-problem}
	\left\{\begin{array}{ll}
	L_{u_p} \widehat \psi=\dfrac{\widehat\L_k(p)}{|x|^2} \widehat \psi & \text{ in } B\\
	\widehat \psi\in \mathcal H_0 ,& 
	\end{array} \right.
	\end{equation}
	where ${\mathcal H}_{0}$ denotes the subspace of $H^1_{0}(B)$ 
	\[ {\mathcal H}_{0}=\big\{ \psi  \in H^1_{0}(B) \, : \, |x|^{-1}\psi \in L^2(B)\big\} .\]
Precisely $m(u_p)$ is the number, counted with multiplicity, of the negative eigenvalues of  \eqref{sing-eigenvalue-problem},  or equivalently as the maximal dimension of a subspace of $\mathcal H_0$ where ${\mathcal Q}_{u_p} $ is negative defined.
\\
Hereafter we focus on the radial solution with exactly $m$ nodal zones, which shall be denoted by $u_p$, again. For such solution an even more effective description of its  Morse index  can be done by taking advantage from  the  transformation introduced in \cite{GGN}
\begin{equation}\label{transformation-henon}
t=r^{\frac{2+\a}{2}} ,\qquad w(t)=u(r) ,
\end{equation} 
which maps the space $H^1_{0,\rad}(B)$ into
\begin{equation}\label{H0M-def}
\begin{array}{rl}H^1_{0,M}:= \big\{v:(0,1)\to\R\, : \, & v \text{ is measurable and  has a first order weak derivative $v'$} \\
& \text{ with } \int_0^1 r^{M-1} \left(v^2 + |v'|^2 \right) dr < +\infty \big \}, \end{array}
\end{equation}
for 
\begin{align}\label{Malpha}
M & = M(N,\alpha):= \frac{2(N+\alpha)}{2+\alpha}  .
\end{align} 
As shown in \cite[Proposition 4.9]{AG-sez2}, $u_p$ 
is transformed by \eqref{transformation-henon} into the unique (up to the sign) solution of the "radially extended" Lane-Emden problem
\begin{equation}\label{LE-radial}
\begin{cases}
- \left(t^{M-1} w^{\prime}\right)^{\prime}= \left(\frac{2}{2+\a}\right)^2 t^{M-1} |w|^{p-1}w  , \qquad  & 0<t< 1, \\
w'(0)=0, \quad w(1)=0
\end{cases}\end{equation}
which has $m$ nodal zones  and shall be denoted by $w_p$ hereafter. 
\\
In the same paper the computation of the Morse index of $u_p$ has been related to a singular  Sturm-Liouville problem connected with $w_p$, namely 
\begin{equation}\label{radial-singular-problem-LE}
\begin{cases}-\left(t^{M-1}\phi_i'\right)'=t^{M-1}\left( W_p (t)+\frac{ \nu_i(p)}{t^2}\right) \phi_i  &  \text{ for } t\in(0,1) ,
\\
\phi_i \in {\mathcal H}_{0,M} & 
\end{cases}
\end{equation}
where
\begin{equation} \label{pot}
W_p(t)= p \left(\frac{2}{2+\a}\right)^2 |w_p(t)|^{p-1}
\end{equation}
and ${\mathcal H}_{0,M}$ denotes the subspace of $H_{0,M}$ made up by functions which also satisfy
\begin{equation}\label{Hstorto0M}
\int_0^1 t^{M-3} \phi^2 dt < \infty.
\end{equation}
It is useful to remark that the eigenvalues $\nu_i(p)$ are well defined only if $\nu_i(p)< \left(\frac{N-2}{2+\a}\right)^2$, and in this case they also have a variational characterization
\begin{equation}\label{radial-singular-M}\begin{split}
{\nu}_1(p )= & \inf_{\substack{\phi\in\mathcal{H}_{0,M}\ w\neq 0}}\frac{\int_0^1 t^{M-1}\left(|\phi'|^2- W_p\phi^2\right) dt}{\int_0^1 t^{M-3}\phi^2 dt},  
\\
{\nu}_{i}(p)= & \inf_{\substack{\phi\in\mathcal{H}_{0,M}\ \phi\neq 0\\ w\underline \perp_{M}\{\phi_1,\dots,\phi_{i-1}\}}}\frac{\int_0^1 t^{M-1}\left(|\phi'|^2- W_p\phi^2\right) dt}{\int_0^1 t^{M-3}\phi^2 dt} , 
\end{split}\end{equation}
see as shown in \cite[Subsection 3.1]{AG-sez2}. Here the perpendicularity condition denoted by $\underline{\perp}_M$ means 
\begin{equation}\label{perpM-def}
\phi \underline{\perp}_M \psi \ \iff \ \int_0^1 t^{M-3} \phi \psi dt =0 . 
\end{equation}
Moreover by the analysis performed in \cite[Section 4]{AG-sez2-bis} the only negative eigenvalues of \eqref{radial-singular-problem-LE} are $\nu_1(p)<\nu_2(p)<\dots< \nu_m(p)<0$ and they satisfy
\begin{align}
\label{nl<k-general-H} & {\nu}_i(p)  < -(M-1)   &\text{ as } i=1,\dots m-1 ,
\\
\label{num>k-general-H} & -(M-1) <{\nu}_m(p) <0  ,  &
\end{align}
for any value of the parameter $p$.

Putting together \cite[Proposition 1.5]{AG-sez2} and \cite[Theorem 1.3]{AG-sez2-bis} gives
\begin{proposition}\label{prop:morse-formula}
	Let $u_p$ be a radial solution to \eqref{H} with $m$ nodal zones. Then it is radially nondegenerate, its radial Morse index is $m$ and its Morse index is given by
\begin{equation}\label{morse-formula}
m(u_p) = \sum\limits_{i=1}^{m}\sum\limits_{j=0}^{\lceil J_i(p)-1\rceil } N_j, 
\end{equation}
\begin{tabular}{ll} 
	where & $J_i(p)=\frac{2+\a}{2} \left(\sqrt{\left(\frac{N-2}{2+\a}\right)^2- \nu_i(p)}-\frac{N-2}{2+\a}\right)$,\\ 
	& $\lceil s \rceil = \{\min n\in \mathbb Z \, : \, n\ge s\}$ denotes the ceiling function and \\
	& $	N_j=\begin{cases}
	1 & \text{ when }j=0\\
	\frac{(N+2j-2)(N+j-3)!}{(N-2)!j!} & \text{ when }j\geq 1
	\end{cases}$ is the multiplicity of the eigenvalue  \\
	&
	$\l_j=j(N+j-2)$ for the Laplace-Beltrami operator in the sphere ${\mathbb S}_N$.
\end{tabular}

Furthermore the negative singular eigenvalues  of \eqref{sing-eigenvalue-problem} can be decomposed as
\[\widehat\L_k(p) = \left(\frac{2+\a}{2}\right)^2 \nu_i(p) + \l_j \]
 as far as $ \nu_i(p)<- \left(\frac2{2+\a}\right)^2  \l_j$ for some $i=1, \dots m$ and $j\ge 0$, while the related eigenfunctions are 
\begin{equation}\label{decomposition} \widehat \psi_k(x) = \phi_i\left(|x|^{\frac{2+\a}{2}}\right) Y_j\left(\frac{x}{|x|}\right),
\end{equation}
where $\phi_i$ is an eigenfunction of \eqref{radial-singular-problem-LE} related to $\nu_i(p)$, and $Y_j$ is an eigenfunction for the Laplace-Beltrami operator in ${\mathbb S}_{N-1}$ related to $\l_j$.
\end{proposition}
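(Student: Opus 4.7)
The plan is to exploit the characterization of $m(u_p)$ via the singular eigenvalue problem \eqref{sing-eigenvalue-problem} (recalled from \cite[Proposition 1.1]{AG-sez2}), decouple it by spherical harmonics, and then transfer each radial component to the Lane--Emden form via \eqref{transformation-henon}, so that the singular Sturm--Liouville problem \eqref{radial-singular-problem-LE} appears naturally. The Morse index then becomes a bookkeeping exercise over the pairs $(i,j)$ for which the resulting eigenvalue is negative.

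First I would write any $\psi\in\mathcal H_0$ in the form $\psi(x)=\sum_{j\ge 0}\sum_{h=1}^{N_j}h_{j,h}(|x|)Y_{j,h}(x/|x|)$. Plugging this into the quadratic form $\mathcal Q_{u_p}$ and using the orthogonality of the $Y_{j,h}$ with eigenvalues $\l_j=j(N+j-2)$ gives, for a single angular mode, the one-dimensional form
\begin{equation*}
\int_0^1\!\Bigl(|h'|^2+\frac{\l_j}{r^2}h^2-p\,r^{\a}|u_p|^{p-1}h^2\Bigr)r^{N-1}\,dr,
\end{equation*}
to be compared with $\int_0^1 h^2 r^{N-3}\,dr$. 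Next I would apply the change of variable $t=r^{(2+\a)/2}$, $\phi(t)=h(r)$, which (as in \cite{GGN} and \cite[Section 4]{AG-sez2}) converts $H^1_{0,\rad}(B)$ into $H^1_{0,M}$ with $M=2(N+\a)/(2+\a)$, and converts $u_p$ into the solution $w_p$ of the extended Lane--Emden problem \eqref{LE-radial}. A direct computation of the Jacobians shows $r^{N-1}\,dr=\tfrac{2}{2+\a}\,t^{M-1}\,dt$ and $r^{N-3}\,dr=\tfrac{2}{2+\a}\,t^{M-3}\,dt$, and that the gradient term rescales by $\bigl(\tfrac{2+\a}{2}\bigr)^2$; together this transforms the Rayleigh quotient into
\begin{equation*}
\Bigl(\tfrac{2+\a}{2}\Bigr)^2\,\frac{\int_0^1 t^{M-1}\bigl(|\phi'|^2-W_p\phi^2\bigr)dt}{\int_0^1 t^{M-3}\phi^2\,dt}+\l_j,
\end{equation*}
with $W_p$ as in \eqref{pot}. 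The infimum of the first quotient over orthogonal directions in $\mathcal H_{0,M}$ is precisely the eigenvalue $\nu_i(p)$ of \eqref{radial-singular-problem-LE}, so the negative singular eigenvalues of \eqref{sing-eigenvalue-problem} are exactly
\begin{equation*}
\widehat\L_{(i,j)}(p)=\Bigl(\tfrac{2+\a}{2}\Bigr)^2\nu_i(p)+\l_j,
\end{equation*}
each with multiplicity $N_j$ and eigenfunctions given by \eqref{decomposition}. This yields at once the stated decomposition.

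To obtain the formula \eqref{morse-formula} I would count: $\widehat\L_{(i,j)}(p)<0$ precisely when $\l_j=j(N+j-2)<\bigl(\tfrac{2+\a}{2}\bigr)^2|\nu_i(p)|$, and solving the quadratic inequality $j^2+(N-2)j-\bigl(\tfrac{2+\a}{2}\bigr)^2|\nu_i(p)|<0$ gives exactly $j<J_i(p)$ with $J_i(p)$ as defined in the statement. Hence the admissible $j$ range from $0$ up to $\lceil J_i(p)-1\rceil$, contributing $N_j$ eigenvalues each, and summing over $i=1,\dots,m$ produces \eqref{morse-formula}. The range of $i$ is limited to $1,\dots,m$ because by \eqref{nl<k-general-H}--\eqref{num>k-general-H} only $\nu_1(p),\dots,\nu_m(p)$ are negative (all other radial singular eigenvalues are nonnegative, hence contribute no negative $\widehat\L_{(i,j)}(p)$ even for $j=0$). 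Choosing $j=0$ in the construction recovers $m$ distinct negative eigenvalues corresponding to radial eigenfunctions, which simultaneously gives the radial Morse index $m$ and the radial nondegeneracy (since \eqref{num>k-general-H} forbids $\nu_m(p)=0$).

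The most delicate step is verifying cleanly that the change of variable $t=r^{(2+\a)/2}$ sets up a bijection between $\mathcal H_0$ restricted to the $j$-th angular sector and $\mathcal H_{0,M}$ (cf.~\eqref{Hstorto0M}), including the correct behaviour at the origin, so that the variational characterization \eqref{radial-singular-M} transfers faithfully and no spurious eigenvalues are introduced or lost. The rest is essentially algebraic. Once this correspondence is in place, the cited results \cite[Proposition 1.5]{AG-sez2}, \cite[Theorem 1.3]{AG-sez2-bis} can be quoted to close the argument.
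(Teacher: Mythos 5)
Your proposal is correct and follows the same route the paper relies on: the paper gives no proof of this proposition at all, simply quoting \cite[Proposition 1.5]{AG-sez2} and \cite[Theorem 1.3]{AG-sez2-bis}, and your spherical-harmonic decomposition combined with the change of variables \eqref{transformation-henon} is precisely the argument underlying those cited results. One bookkeeping slip: it is the weighted measure $r^{N-1+\a}\,dr$, not $r^{N-1}\,dr$, that equals $\tfrac{2}{2+\a}\,t^{M-1}\,dt$ (your identity $r^{N-3}\,dr=\tfrac{2}{2+\a}\,t^{M-3}\,dt$ is the correct one); the missing factor $r^{\a}$ is exactly the one produced by the chain rule in $|h'(r)|^2=\bigl(\tfrac{2+\a}{2}\bigr)^2 r^{\a}\,|\phi'(t)|^2$ and by the weight $|x|^{\a}$ in the potential term, so the Rayleigh quotient you arrive at is nevertheless the right one and the subsequent counting of the pairs $(i,j)$ with $\l_j<\bigl(\tfrac{2+\a}{2}\bigr)^2|\nu_i(p)|$ is correct.
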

In this way the asymptotic Morse index of $u_p$ can be computed by investigating the eigenvalues $\nu_i(p)$ of \eqref{radial-singular-problem-LE}, which shall be the topic of the remaining of this subsection.

\

As a preliminary it is worth noticing that the convergence stated by Theorem \ref{p1} translates into the following one for $w_p$ and $W_p$, defined respectively in \eqref{transformation-henon} and \eqref{pot}.
\begin{corollary}\label{p1v}
	Let $z_m$ stand for $z_m\left(\frac{N-2}{2+\a}\right)$ according to \eqref{z-1-beta}.
As $p\to 1$ we have  
\begin{align}
\label{a0v}	\| W_p\|_{\infty} & \to z_m^2  	, \\
	\label{a0bisv}
	\frac{w_p(t)} {\| w_p\|_{\infty}} & \to \Gamma\left(\frac{N+\a}{2+\a}\right)  t^{-\frac{N-2}{2+\a}} \jcal_{\frac{N-2}{2+\a}} (z_m t)  \\ \nonumber
	& \quad = \Gamma\left(\frac{N+\a}{2+\a}\right) \sum\limits_{k=0}^{+\infty} \dfrac{(-\frac{z_m^2}{4})^k  }{k!\Gamma(k+\frac{\n+\a}{2+\alpha})} t^{2 k}	\qquad  \text{ in } C^2[0,1)  , 
	\intertext{ and denoting by $0<t_1<\dots t_m=1$ the zeros of $w_p$}
	\label{a2v}
	 t_i & \to \frac{z_i}{z_m}  \qquad \text{ as } i=1,\dots m-1.
	 \intertext{Besides}
	\label{a1v} W_p & \rightharpoonup z_m^2 \quad \text{ weakly in $L^2(0,1)$.} \end{align}
\end{corollary}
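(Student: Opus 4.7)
The plan is to derive \eqref{a0v}, \eqref{a0bisv}, and \eqref{a2v} as direct translations of the three statements in Theorem \ref{p1} through the change of variable \eqref{transformation-henon}, and then obtain \eqref{a1v} separately by a bounded-convergence argument built on top of them.

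First I would observe that, since $t=r^{(2+\alpha)/2}$ is a bijection of $[0,1]$ onto itself that preserves function values, $\|w_p\|_\infty=\|u_p\|_\infty$. Squaring \eqref{a0} gives $\|u_p\|_\infty^{p-1}\to ((2+\alpha)/2)^2 z_m^2$, and hence
$$\|W_p\|_\infty=p\left(\frac{2}{2+\alpha}\right)^{\!2}\|u_p\|_\infty^{p-1}\longrightarrow z_m^2,$$
proving \eqref{a0v}. Equally immediately, \eqref{a2v} follows from $t_{i,p}=r_{i,p}^{(2+\alpha)/2}$ together with \eqref{a2}. For \eqref{a0bisv} I would substitute $r=t^{2/(2+\alpha)}$ in \eqref{a0bis}: since this map is a smooth diffeomorphism on $(0,1]$, the $C^2(B_1)$ convergence of $u_p/\|u_p\|_\infty$ translates into $C^2$ convergence of $w_p/\|w_p\|_\infty$ on every compact subset of $(0,1]$. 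The behaviour at $t=0$ is then recovered by exploiting that $w_p$ solves the ODE \eqref{LE-radial} (which has a regular singularity at the origin) together with the fact that the candidate limit is analytic there, yielding convergence in $C^2[0,1)$. The explicit series expansion comes from plugging $\beta=(N-2)/(2+\alpha)$ into the definition \eqref{bessel} of $\jcal_\beta$, which gives $t^{-(N-2)/(2+\alpha)}\jcal_{(N-2)/(2+\alpha)}(z_m t)=\sum_k(-z_m^2/4)^k/(k!\Gamma(k+(N+\alpha)/(2+\alpha)))\,t^{2k}$.

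The only part that requires a small independent argument is the weak convergence \eqref{a1v}. Here my plan is to exploit uniform boundedness: by \eqref{a0v}, $\|W_p\|_\infty$ is bounded, and on the complement of the $m$ zeros $z_i/z_m$ of the limit eigenfunction, the uniform convergence supplied by \eqref{a0bisv} gives $|w_p/\|w_p\|_\infty|^{p-1}\to 1$ (by \eqref{per-dopo} applied as in Lemma \ref{lem:conv-eigenv}), so that
$$W_p(t)=p\left(\frac{2}{2+\alpha}\right)^{\!2}\|w_p\|_\infty^{p-1}\left|\frac{w_p(t)}{\|w_p\|_\infty}\right|^{p-1}\longrightarrow z_m^2$$
pointwise almost everywhere on $(0,1)$. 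The Bounded Convergence Theorem then upgrades this to convergence of $W_p$ to $z_m^2$ in $L^q(0,1)$ for every $q<\infty$, hence in particular weakly in $L^2$.

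The most delicate step is the last one, since one has to control $|w_p/\|w_p\|_\infty|^{p-1}$ near the zeros of the limit where it could a priori misbehave; but this causes no real trouble because the exceptional set is finite and the uniform $L^\infty$ bound on $W_p$ already dominates any such behaviour, so bounded convergence closes the argument without extra work.
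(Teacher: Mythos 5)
Your proof is correct and follows essentially the same route as the paper: \eqref{a0v}, \eqref{a0bisv}, and \eqref{a2v} are direct translations of Theorem \ref{p1} through the change of variable \eqref{transformation-henon}, and \eqref{a1v} is deduced from the uniform bound on $W_p$ together with its convergence to the constant $z_m^2$ off a finite set of points. You even strengthen \eqref{a1v} to strong $L^q$ convergence for every finite $q$ via the Bounded Convergence Theorem, and your remark about needing the ODE structure of \eqref{LE-radial} to get $C^2$ control at $t=0$ (where the map $t\mapsto t^{2/(2+\alpha)}$ is not a diffeomorphism when $\alpha>0$) is a genuine point that the paper elides under ``follows immediately.''
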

\begin{proof}
\eqref{a0v},  \eqref{a0bisv} and \eqref{a2v} follow immediately by \eqref{a0}, \eqref{a0bis} and \eqref{a2}.
Further $W_p(t)= \| W_p\|_{\infty}  \left(\frac{w_p(t)} {\| w_p\|_{\infty}} \right)^{p-1}$ is bounded and converges uniformly to the constant $z_m^2$ on any closed interval contained in $[0,1)\setminus\{z_1/z_m,\dots z_{m-1}/z_m\}$, so that the weak convergence follows trivially.
\end{proof}

Also in the following when  the parameter $\beta$ is omitted we mean $z_i= z_i\left(\frac{N-2}{2+\a}\right)$.
Since the map $\beta\mapsto z_i(\beta)$ is continuous and increasing (see for instance \cite{Elb}), there exists 
\begin{equation}\label{beta-i-def} \begin{split}
\beta_i=\beta_i(\alpha, N,m)>0 \ & \text{  such that }  z_i(\beta_i) \text{ (the $i^{th}$ zero of the Bessel function ${\mathcal J}_{\beta_i}$)} \\
& \text{ coincides with $z_m$ (the $m^{th}$ zero of ${\mathcal J}_{\frac{\n-2}{2+\alpha}}$)}, \end{split}\end{equation}
moreover  
\[ \beta_1>\dots\beta_m=\frac{\n-2}{2+\alpha} .\]

The limit of the singular eigenvalues $\nu_i(p)$ can be expressed in terms of the parameters $\beta_i$ as follows.

\begin{proposition}\label{formulainutile}
	Consider the eigenvalue problem  \eqref{radial-singular-problem-LE}, with $W_p$ given as in \eqref{pot}, and $u_p$ is the radial solution to \eqref{H} with $m$ nodal zones.
	Then as $p\to 1$ we have 
	\begin{align}\label{nup=1}
	 \nu_i(p)& \to \left(\frac{\n-2}{2+\alpha}\right)^2 -\beta_i^2 \qquad \text{ as } i=1,\dots m.
	 	\end{align}
	\end{proposition}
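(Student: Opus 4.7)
The plan is to combine the min-max characterization \eqref{radial-singular-M} of $\nu_i(p)$ with the convergence $W_p \to z_m^2$ established in Corollary \ref{p1v}, reducing the computation to an explicit limit eigenvalue problem that can be solved via the Bessel equation.

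First I would identify this limit. Formally passing to the limit in \eqref{radial-singular-problem-LE} one is led to
\[-(t^{M-1}\phi')' = t^{M-1}\left(z_m^2 + \frac{\nu^\infty}{t^2}\right)\phi, \qquad \phi\in\mathcal H_{0,M}.\]
Since $\tfrac{M-2}{2} = \tfrac{N-2}{2+\alpha}$, the substitution $\phi(t) = t^{-(N-2)/(2+\alpha)}\psi(t)$ transforms this into the Bessel equation
\[t^2\psi'' + t\psi' + \left(z_m^2 t^2 - \beta^2\right)\psi = 0, \qquad \beta^2 = \left(\tfrac{N-2}{2+\alpha}\right)^2 - \nu^\infty.\]
The only branch lying in $\mathcal H_{0,M}$ is $\psi = \jcal_\beta(z_m t)$, and the Dirichlet condition at $t=1$ forces $z_m$ to be a zero of $\jcal_\beta$, i.e.\ $z_m = z_j(\beta)$ for some $j \ge 1$. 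The strict monotonicity of $\beta\mapsto z_j(\beta)$ together with the definition \eqref{beta-i-def} of the $\beta_i$'s selects the decreasing sequence $\beta_1 > \beta_2 > \dots$, producing the increasing sequence of eigenvalues $\nu_i^\infty := \left(\tfrac{N-2}{2+\alpha}\right)^2 - \beta_i^2$, which is exactly the right-hand side of \eqref{nup=1}.

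Next I would show $\nu_i(p)\to\nu_i^\infty$ via a two-sided min-max argument. For the upper bound I plug the span of the first $i$ limiting eigenfunctions (which are explicit and clearly lie in $\mathcal H_{0,M}$) into the Rayleigh quotient in \eqref{radial-singular-M}: since $\|W_p\|_\infty$ is uniformly bounded and $W_p \to z_m^2$ pointwise away from the finitely many limiting nodal radii, dominated convergence gives $\limsup_{p\to 1}\nu_i(p) \le \nu_i^\infty$. For the matching lower bound I take $\underline{\perp}_M$-orthonormal eigenfunctions $\phi_i(p)$ of $\nu_i(p)$; since $t^{M-1}\le t^{M-3}$ on $(0,1)$ the Rayleigh quotient immediately yields $\nu_i(p)\ge -\|W_p\|_\infty$, so combined with the upper bound the $\nu_i(p)$ stay uniformly bounded and the quadratic form delivers a uniform bound on $\int_0^1 t^{M-1}|\phi_i'(p)|^2 dt$. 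Extracting weak limits $\phi_i(p)\rightharpoonup \phi_i^\ast$ and passing to the limit in the weak formulation of the equation gives that each $\phi_i^\ast$ solves the limit problem with eigenvalue $\nu_i^\ast = \lim\nu_i(p)$; once the $\phi_i^\ast$ are shown to remain linearly independent, the ordered sequence $\nu_1^\ast\le\dots\le\nu_m^\ast$ must list the first $m$ eigenvalues of the limit problem in order, and the upper bound forces $\nu_i^\ast = \nu_i^\infty$.

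The hard part will be controlling the passage to the weak limit near $t=0$, where the weight $t^{M-3}$ degenerates and the embedding $\mathcal H_{0,M}\hookrightarrow L^2(t^{M-3}dt)$ is not obviously compact. I plan to handle this by combining the Hardy-type control encoded in the very definition \eqref{Hstorto0M} of $\mathcal H_{0,M}$ with the uniform strict inequality $\nu_i(p) < \left(\frac{N-2}{2+\alpha}\right)^2$, which is precisely the condition that rules out concentration of the mass of $\phi_i(p)$ at $t=0$ and yields the strong convergence needed to preserve the normalization $\int_0^1 t^{M-3}(\phi_i^\ast)^2 dt = 1$ and the mutual orthogonality of the limits.
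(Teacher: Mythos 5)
The route you sketch is genuinely different from the paper's, and the key difficulty you flag at the end is real, but your proposed resolution of it does not work for the index $i=m$; this is precisely where the paper switches to a different technique.

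For $i\le m-1$ the paper also passes to the limit in the eigenvalue equation, but via pointwise (Ascoli) compactness of the normalized eigenfunctions rather than via the min-max Rayleigh quotient: the estimate \eqref{psi-in-0} gives $\theta_{i,n}\ge 1$ when $\nu_i(p)<-(M-1)$, which yields equicontinuity near $t=0$ and hence a nontrivial uniform limit. Your min-max argument would work there too, and the upper bound you propose (testing with the limiting span and passing $W_p\to z_m^2$ through) is fine.

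The genuine gap is at $i=m$. There $\nu_m(p)\to 0$, so $\theta_{m,n}\to 0$, the near-origin decay of $\psi_{m,n}$ degenerates, and the bound $\nu_m(p)<\left(\frac{M-2}{2}\right)^2$ becomes asymptotically an equality (when $N=2$ it \emph{is} $\nu_m(p)<0$, so the gap to the Hardy constant is exactly $|\nu_m(p)|\to 0$). This is the opposite of ``ruling out concentration'': it is the borderline Hardy regime in which the embedding $\mathcal H_{0,M}\hookrightarrow L^2(t^{M-3}dt)$ loses compactness, and a normalized sequence with $\nu_m(p)\to 0$ can converge weakly to zero. Indeed in dimension $N=2$ ($M=2$) the candidate limit eigenfunction for $\nu^\infty=0$ is $\phi(t)=\jcal_0(z_m t)$, which does \emph{not} belong to $\mathcal H_{0,M}$ since $\phi(0)=1$ and $\int_0 t^{-1}\phi^2\,dt=\infty$; so $0$ is not an eigenvalue of the limit problem in $\mathcal H_{0,M}$, there is no ``$m$-th eigenvalue of the limit problem'' to compare against, and the final sentence of your argument (``the ordered sequence $\nu_1^\ast\le\dots\le\nu_m^\ast$ must list the first $m$ eigenvalues of the limit problem'') breaks down because $\phi_m^\ast=0$. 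The conclusion $\nu_m(p)\to 0$ can still be rescued, e.g.\ by noting that if $\phi_m(p)\rightharpoonup 0$ in $\mathcal H_{0,M}$ then the compact embedding $\mathcal H_{0,M}\hookrightarrow L^2(t^{M-1}dt)$ forces $\int t^{M-1}W_p\phi_m^2\to 0$, whence $\nu_m(p)\ge -o(1)$ and, combined with \eqref{num>k-general-H}, $\nu_m(p)\to 0$; but this is a different argument from the one you wrote, and you still need to exclude the alternative that the weak limit is a nontrivial eigenfunction (which for $N=2$ requires showing that $\jcal_\beta$ has no zero at $z_m$ for $0<\beta\le\sqrt{M-1}$). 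The paper avoids all of this by a Sturm--Picone comparison between the eigenfunction $\psi_{m,n}$ and the normalized solution $\bar w_n$, which gives $\bar\nu_m=0$ directly without any compactness near $t=0$. If you want to keep a variational proof, you should separate the case $i=m$ and supply the missing compactness-failure analysis explicitly.
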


In particular $\nu_m(p)\to 0$.
Putting together Propositions \ref{prop:morse-formula} and \ref{formulainutile} yields Theorem \ref{mi-p=1}.

\begin{proof}[Proof of Theorem \ref{mi-p=1}]
		From the limit  \ref{nup=1} one sees that the index $J_i(p)$ appearing in the Morse index formula \eqref{morse-formula} satisfies
		\begin{equation}	\label{Jp=1}   
		J_i(p) \to  \frac{(2+\a)\beta_i -(N-2)}{2}  
		\end{equation}
		as $p\to 1$.
		So when  $\frac{(2+\a)\beta_i -(N-2)}{2} $ are not  integer \eqref{morsep=1} follows, while when $\frac{(2+\a)\beta_i -(N-2)}{2} $ is integer for some $i$ we only get \ref{morsep=1estbrutta}.
		\end{proof}

Some preliminary lemmas are useful to prove Proposition \ref{formulainutile}.
First we remark that all the eigenvalues of \eqref{radial-singular-problem-LE} are bounded from below in a neighborhood of $p=1$. 

\begin{lemma}\label{lem:nu-bounded}
	There exists $C>0$ such that $\nu_1(p)\ge -C$ for $p$ close to $1$. 
\end{lemma}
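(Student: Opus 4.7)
The plan is to use the variational characterization \eqref{radial-singular-M} together with the uniform bound on $W_p$ coming from the asymptotic analysis in Corollary \ref{p1v}. The first step is to observe that \eqref{a0v} gives $\|W_p\|_\infty \to z_m^2$ as $p\to 1$, so there exists a constant $C>0$ (independent of $p$ in a right neighborhood of $1$) with $0\le W_p(t)\le C$ for $t\in(0,1)$.

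The second, and crucial, step is to exploit the elementary inequality $t^{M-1}\le t^{M-3}$ on $(0,1)$, which holds since $t^{M-1}=t^2\, t^{M-3}$ and $t^2\le 1$. Combining this with the bound on $W_p$, for every $\phi\in\mathcal{H}_{0,M}\setminus\{0\}$ we estimate
\begin{equation*}
\int_0^1 t^{M-1}\bigl(|\phi'|^2-W_p\phi^2\bigr)\,dt \;\ge\; -C\int_0^1 t^{M-1}\phi^2\,dt \;\ge\; -C\int_0^1 t^{M-3}\phi^2\,dt.
\end{equation*}
Dividing by the positive quantity $\int_0^1 t^{M-3}\phi^2\,dt$ (which is finite by the very definition \eqref{Hstorto0M} of $\mathcal{H}_{0,M}$) and taking the infimum over $\phi$ yields $\nu_1(p)\ge -C$, as desired.

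There is essentially no obstacle here: the argument is a one-line Rayleigh-quotient estimate once one notices that the weight in the denominator is heavier than the one in the numerator on the interval $(0,1)$. The only ingredient from the rest of the paper that is needed is the uniform $L^\infty$ bound on the potential $W_p$ near $p=1$, which is supplied by \eqref{a0v}. In particular no spectral information on the limit problem, nor any control on the sequence of optimizers, is required at this stage; sharper information on $\nu_1(p)$ (and on the higher $\nu_i(p)$) will presumably be extracted in the subsequent lemmas leading to Proposition \ref{formulainutile}.
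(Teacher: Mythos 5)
Your proof is correct and takes essentially the same approach as the paper: use the uniform $L^\infty$ bound on $W_p$ from \eqref{a0v} and the weight comparison $t^{M-1}\le t^{M-3}$ on $(0,1)$ in the Rayleigh quotient \eqref{radial-singular-M}. (The paper's displayed chain ends with $t^{M-2}$ rather than $t^{M-3}$, which is either an intermediate step left implicit or a small typo; your version matches the denominator of the quotient directly.)
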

\begin{proof}
	By \eqref{a0v} $0\le W_p(t)\le z_m^2+\e$ for $p$ sufficiently close to $1$. So for all $\psi\in{\mathcal H}_{0,M}$
\begin{align*} 
\int_0^1 t^{M-1} \left(|\psi' |^2- W_p\psi^2\right)dt \geq -(z_m^2+\e)\int_0^1 t^{M-1}\psi^2 dt  \geq - (z_m^2+\e)   \int_0^1 t^{M-2} \psi^2 dt ,
\end{align*}
and the claim follows by the variational characterization \eqref{radial-singular-M}.  
\end{proof}

Next we establish  an ad-hoc Poincar\'e inequality. For $0\le a<b$ we denote by $H_{0,M}(a,b)$ the space of functions of $H^1(a,b)$ such that $\psi(b)=0$, endowed with the  norm
\[\|\psi; H_{0,M}(a,b) \| = \int_a^b t^{M-1} |\psi' |^2 dt .\]
It is clear that $H_{0,M}(0,1)$ is the space $H_{0,M}$ already introduced.
 It is very easy to see that
 \begin{lemma}\label{poincare}
 	 For every $\psi\in H_{0,M}(a,b)$ we have
 	\[\int_a^b t^{M-1} \psi^2 dt  \le \frac{b(b-a)}{M-1} \int_a^b t^{M-1} |\psi'|^2 dt . \]
 \end{lemma}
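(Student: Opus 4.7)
The plan is to reduce the stated inequality to an auxiliary Hardy-type estimate with the weight $t^{M-2}$ in front of $\psi^2$ on the left, and then recover the weight $t^{M-1}$ on both sides by the elementary observation $t^{M-1}\le b\,t^{M-2}$ on $(a,b)$. This splits off the factor $b$ in the claimed constant $\frac{b(b-a)}{M-1}$ and isolates the quantity $\frac{b-a}{M-1}$, which is naturally produced by integrating the weight $t^{M-2}$.

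First I would use that $\psi(b)=0$, together with the fundamental theorem of calculus and a plain (unweighted) Cauchy--Schwarz, to write the pointwise estimate
\[ \psi(t)^2 \le (b-t)\int_t^b|\psi'(s)|^2\,ds \qquad \text{for every } t\in(a,b). \]
Next I would multiply by $t^{M-2}$, integrate over $(a,b)$, and swap the order of integration by Fubini to get
\[ \int_a^b t^{M-2}\psi(t)^2\,dt \le \int_a^b |\psi'(s)|^2 \int_a^s t^{M-2}(b-t)\,dt\,ds. \]
For the inner integral I would bound $(b-t)\le(b-a)$ on $(a,s)\subset (a,b)$ and compute
\[ \int_a^s t^{M-2}(b-t)\,dt \le (b-a)\,\frac{s^{M-1}-a^{M-1}}{M-1}\le \frac{(b-a)\,s^{M-1}}{M-1}. \]
This produces the auxiliary inequality $\int_a^b t^{M-2}\psi^2\,dt \le \frac{b-a}{M-1}\int_a^b s^{M-1}|\psi'|^2\,ds$, and since $t^{M-1}\le b\cdot t^{M-2}$ on $(a,b)$, multiplying by $b$ yields the desired estimate.

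There is really no serious obstacle here: the whole argument is a careful Fubini-plus-Cauchy--Schwarz computation. The only mildly delicate point is to apply Cauchy--Schwarz \emph{without} any weight at the pointwise level and to bring in the weight $t^{M-2}$ only after integrating—doing it in this order is what makes the constant come out cleanly as $\frac{b(b-a)}{M-1}$, rather than a worse (and non-sharp) constant that one obtains by applying a weighted Cauchy--Schwarz directly. One should also remark that the argument tacitly uses $M>1$, which is guaranteed by the definition \eqref{Malpha} since $M\ge 2$ for $N\ge 2$ and $\alpha\ge 0$.
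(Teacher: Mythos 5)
Your argument is correct and is essentially the same as the paper's: both prove the pointwise bound $\psi(t)^2\le (b-t)\int_t^b|\psi'|^2$ by the fundamental theorem of calculus plus Cauchy--Schwarz, swap the order of integration by Fubini, and invoke the elementary inequalities $t\le b$, $b-t\le b-a$, and $\int_a^s t^{M-2}\,dt\le \frac{s^{M-1}}{M-1}$; you merely apply the first two estimates at a slightly later stage than the paper does, which is a cosmetic reorganization of the same computation. Your remark that the argument tacitly needs $M>1$ (here $M\ge 2$) is a worthwhile observation that the paper leaves implicit.
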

\begin{proof}
	Since $\psi$ has first derivative in $L^2$, it is continuous and differentiable a.e., and from $\psi(b)=0$ we get
	\[ \psi(t) = \int_t^b \psi'(r) dr .\]
	Hence
	\begin{align*}
	\int_a^b t^{M-1} \psi^2 dt & = 	\int_a^b t^{M-1} \left(\int_t^b \psi'(s) ds\right)^2 dt \\
	& \underset{\text{Holder}}{\le} \int_a^b t^{M-1}  (b-t) \int_t^b |\psi'(s)|^2 ds \,dt \le b(b-a)\int_a^b t^{M-2} \int_t^b |\psi'(s)|^2 ds \,dt \\
	&= b(b-a)\int_a^b  |\psi'(s)|^2  \int_a^s t^{M-2} dt \,ds \le \frac{b(b-a)}{M-1} \int_0^1 s^{M-1}  |\psi'(s)|^2  ds.
	\end{align*}
\end{proof}

\begin{proof}[Proof of Proposition \ref{formulainutile}]
By \eqref{nl<k-general-H}, \eqref{num>k-general-H} and Lemma \ref{lem:nu-bounded}  for any sequence $p_n\to 1$ there is an extracted sequence (that we still denote by $p_n$) such that $\nu_i(p_n)$ converges to some  $\bar{\nu}_i$. Moreover $\bar\nu_i \le - (M-1)$ if $i=1,\dots m-1$ and  $- (M-1)\le \bar \nu_m\le 0$. 
\\
Let $\psi_{i,n}\in {\mathcal H}_{0,M}$ the eigenfunction related to $\nu_i(p_n)$ normalized so that $\|\psi_{i,n}\|_{\infty}=1$. We recall that by \cite[Proposition 3.8 and Property 5 in Subsection 3.1]{AG-sez2} $\psi_{i,n} \in C[0,1]\cap C^1(0,1]$ has exactly $i$ nodal zones and for $t$ next to $0$ and
	\begin{equation}\label{psi-in-0}
	|\psi_{i,n}(t)| \le C t^{\theta_{i,n}} ,  \quad |\psi_{i,n}'(t)| \le C t^{\theta_{i,n}-1} , 
	\end{equation}
	with $\theta_{i,n}= \sqrt{\left(\frac{M-2}{2}\right)^2 -\nu_i(p_n)} - \frac{M-2}{2}$. It is worth remarking that the constants $C$ appearing here only depend by $\|\psi_{i,n}\|_{\infty}=1$, and therefore are general in the present situation.
	Further for $i=1,\dots m-1$ the  estimates in \eqref{psi-in-0} assures that $\psi_{i,n}$ are equicontinuous on a set of type $[0,\e]$ because $\theta_{i,n} \ge 1$.
	This is not the case for $i=m$.
\\	
So the proof of \eqref{nup=1} in the case $i=m$ will differ from the one for $i\le m-1$.

\

{\it First step: the first $m-1$ eigenvalues.}
We show that \eqref{nup=1} holds as $i=1,\dots m-1$, and in doing so we also see that
\begin{equation}\label{limit-autofunz}
\psi_{i,n} (t) \to A_i   t^{-\frac{N-2}{2+\a}} \jcal_{\beta_i} (z_m t) \quad \text{ uniformly in } [0,1]
\end{equation}
for some constant $A_i\neq 0$.

Using $\psi_{i,n}$ as a test function in \eqref{radial-singular-problem-LE} gives
\begin{align} \nonumber
	\int_0^1t^{M-1} |\psi_{i,p_n}'|^2 dt & =
\int_0^1t^{M-1} \left( W_p + \frac{\nu_{i}(p_n)}{t^2}\right)\psi_{i,p_n}^2 dt  \\ \label{psi'-unif} 
&	\underset{\eqref{nl<k-general-H}}{<}
	\int_0^1t^{M-1} W_p \psi_{i,p_n}^2 dt 
	\le C
	\end{align}
	thanks to the normalization of $\psi_{i,n}$ and  \eqref{a0v}.
	Hence by the compact embedding of $H^1_{0,M}$ (see \cite[Lemma 6.4]{AG-sez2})
	$\psi_{i,n}$ converges to a function $\psi_i$ weakly in $H^1_{0,M}$, strongly in any $L^q_M$ for any $q>1$ (if $M=2=N$) or for any $1<q<\frac{2M}{M-2}=\frac{2(N+\a)}{N-2}$ (if $M>2$, i.e. $N\ge 3$), and pointwise a.e. 
	Moreover $\psi_{i,n}\to \psi_i$ also in  uniformly on $[0,1]$ by Ascoli Theorem. Indeed we have already noticed that $\psi_{i,n}$ are equicontinuous on $[0,\e]$, while while for $t_1, t_2\in[\e,1]$ we have 
	\begin{align*}
	|\psi_{i,n}(t_1) - \psi_{i,n}(t_2)| \le \int_{t_1}^{t_2} |\psi'_{i,n}(s)| ds \underset{\stackrel{\text{Holder}}{\text{and } \eqref{psi'-unif}}}{\le} 
			C \left(\int_{t_1}^{t_2}s^{1-M}\right)^{\frac{1}{2}}
	\le C \e^{1-M} \sqrt{|t_1-t_2|} .
	\end{align*}
	Thanks to this and to the weak convergence in \eqref{a1v} one can pass to the limit into equation \eqref{radial-singular-problem-LE} and see that $\psi_i$ is a weak solution to
	\begin{equation}\label{limit-eigenvalue-problem}
	-\left(t^{M-1}\psi_i'\right)' = t^{M-1}\left( z_m^2 +\frac{\bar\nu_i}{t^2} \right)\psi_i  \quad  \text{ as } 0<t < 1.
	\end{equation}
	The uniform convergence yields also that $\psi_i$ is not trivial (actually $\|\psi_i\|_{\infty}=1$ by the normalization) and has at most $i$ nodal zones.
	Let us check that it has exactly $i$ nodal zones, i.e. that none of the nodal zones of $\psi_{i,n}$ disappear.
	Let $a_n$, $b_n$ be two consecutive zeros of $\psi_{i,n}$, now the function  $\psi_{i,n}$ restricted to the $(a_n,b_n)$ belongs to the space $H_{0,M}(a_n,b_n)$ introduced before Lemma \ref{poincare}, and clearly extending it to zero outside $(a_n,b_n)$ gives a function of $H_{0,M}$. Using this extension as a test function in  \eqref{radial-singular-problem-LE}  one sees that
	\begin{align*}
	\int_{a_n}^{b_n} t^{M-1} (\psi'_{i,n})^2 dt & = \int_{a_n}^{b_n} t^{M-1}\left( W_p + \frac{\nu_i(p_n)}{t^2}\right) \psi_{i,n}^2 dt
	\le  \int_{a_n}^{b_n} t^{M-1} W_p \psi_{i,n}^2 dt \\&
	 \underset{\eqref{a0v}}{\le} 
	C  \int_{a_n}^{b_n} t^{M-1}\psi_{i,n}^2 dt \le \frac{C b_n(b_n-a_n )}{M-1}  \int_{a_n}^{b_n} t^{M-1} (\psi'_{i,n})^2 dt 
	\end{align*}
	by the Poincar\'e inequality established in Lemma \ref{poincare}. 
	If follows at once that neither $b_n$ or $b_n-a_n$ vanishes.
	
	Next we define $\phi_i(t) = t^{\frac{N-2}{2+\a}} \psi_i (t/z_m)$. Starting from \eqref{limit-eigenvalue-problem} it is easily seen that $\phi_i$  solves the Bessel equation
	\begin{equation}\label{limit-bessel}
t^2 \phi_i'' + t \phi_i' + (t^2 - \b_i^2) \phi_i= 0    \qquad \text{ for } \beta_i^2= z_m^2-\bar \nu_i.
	\end{equation}
	Since $\phi_i(0)= 0$ we have that $ \phi(t)= C \jcal_{\beta_i}(t)$,
	and as $\phi_i(z_m)= z_m^{\frac{N-2}{2+\a}} \psi_i(1)=0$ it follows that $z_m$ has to be a zero of the Bessel function $\beta_i$.
	Moreover we have seen that $\psi_i$ (and then also $\phi_i$) has $i$ nodal zones, so that $\beta_i$ is determined by the condition \eqref{beta-i-def}, which implies at once \eqref{nup=1} and \eqref{limit-autofunz}.

\

{\it Second step: the last negative  eigenvalue.} It remains to check \eqref{nup=1} for $i=m$, i.e. $\bar\nu_m=0$. To do this  we compare the eigenfunction $\psi_{m,n}$  with $\bar w_n(t)= \frac{1}{\|{w}_{p_n}\|_{\infty}} {w}_{p_n}(t)$, that satisfies
\[ \begin{cases} -\left(t^{M-1} \bar w_n'\right)'= \frac{1}{p_n} t^{M-1} W_{p_n} \bar w_n  & \text{ as } 0 < t < 1 , \\
\bar w_n(0)=1 , \  \bar w_n(1)=0 . & \end{cases} \]
It is easy to establish the Picone type identity	
\begin{equation}\label{picone-m}  
\left(t^{M-1}(\psi_{m,n}'\bar w_n - \psi_{m,n}\bar w_n')\right)' =  t^{M-1}  \left(\left(\frac{1}{p_n}-1\right) W_{p_n}(t) - \frac{\nu_m(p_n)}{t^2}\right) \psi_{m,n} \bar w_n 
\end{equation}
as $0<t<1$. For a rigorous computation without requiring that $\psi_{m,n}$ is a classical solution we refer to \cite[Lemma 3.13]{AG-sez2}.
Thanks to \eqref{a0v} $\left(\frac{1}{p_n} -1\right) W_{p_n} \to 0$ uniformly, and assuming by contradiction that $\nu_m(p_n)\to \bar \nu < 0$ it follows that
\begin{equation}\label{hp-SP}
\left(\frac{1}{p_n} -1\right)W_{p_n} -\frac{\nu_m(p_n)}{t^2}  >  0 \quad \text{ as } 0<t<1 
\end{equation}
for large $n$.
Since both $\psi_{m,n}$ and $\bar w_n$ are null in $t=1$ and have exactly $m-1$ zeros on $(0,1)$, 
the Sturm-Picone's comparison Theorem yields that or $\psi_{m,n}$ is proportional to $ \bar w_n$, or all the zeros of $ \bar w_n$ follows the first zero of $\psi_{m,n}$, say it $\rho$.
The first event is not possible because $\psi_{m,n}(0)=0$ by \eqref{psi-in-0} while  $ \bar w_n(0)=1$.
In the second case we may assume w.l.g.~that $ \bar w_n, \psi_{m,n}>0$ on $(0,\rho)$, so that $\psi_{m,n}'(\rho)<0$ and $ \bar w_n(\rho)>0$. Next integrating \eqref{picone-m} between $t$ and $\rho$ and then letting $t\to 0$ gives
\begin{align*}	
\rho^{M-1}\psi_{m,n}'(\rho)\bar w_n(\rho) - \lim\limits_{t\to 0}t^{M-1}(\psi_{m,n}'\bar w_n - \psi_{m,n}\bar w_n')
\\ =
\int_0^{\rho} t^{M-1}  \left(\left(\frac{1}{p_n}-1\right) W_{p_n} - \frac{\nu_m(p_n)}{t^2}\right) \psi_{m,n} \bar w_n  dt \underset{\eqref{hp-SP}}{>} 0.
\end{align*}
But \eqref{psi-in-0} guarantees that $\psi_{m,n}(t) , t^{M-1} \psi_{m,n}'(t) \to 0$ as $t\to 0$. Indeed also when $M=2$ we have $t^{M-1} |\psi_{m,n}'(t)| \le C t^{\sqrt{-\nu_m(p_n)}}$ with $\nu_m(p_n)<0$.
So we have reached the contradiction $\psi_{m,n}'(\rho)\bar w_n(\rho) > 0$, which concludes the proof.
\end{proof}

Though the characterization of the Morse index in terms of the zeros of the Bessel function presented in \eqref{morsep=1} could not be completely satisfactory, because the laws $\beta\mapsto z_i(\beta)$ are not known explicitely,  the position of $z_i(\beta)$ can be approximated by a numerical procedure, for instance  by the command \texttt{besselzero}  in MatLab. Combining this approximation with a dichotomy argument provides the approximated values of $\beta_i$ and therefore the Morse index of $u_p$.
It is worth remarking that the computation of the asymptotic Morse index can be made more explicit in some particular cases.

	\begin{remark}[the planar case]\label{rem:N=2}
	In the plane the baseline Bessel function is $\jcal_0$, whose zeros are tabulated. Therefore approximated values of the parameters $\beta_i$ can be obtained in an elementary way.
	For instance in the case of two nodal zones we get $\beta_1\approx 2,\!305$, 
	and formula \eqref{morsep=1} yields that for $p$ near at $1$
		\begin{align*}\tag{\ref{morsep=1-N=2}}
	m(u_p) = 2 \left\lceil\frac{2+\a}{2}\beta\right\rceil  ,   \qquad  & \text{for } \beta \approx 2,\!305, 
	\intertext{ if $\a\neq\a_n=2(n/\beta -1)$ (as $n\ge 3$), otherwise }
	\nonumber 
	(2+\a_n)\beta  \le m(u_p) \le  (2+\a_n)\beta + 2 . & 
	\end{align*}
	In particular the solution to the Lane-Emden equation ($\a=0$) with two Nodal zones has Morse index 6, as already noticed in \cite{GI}.
	For small positive values of $\alpha$ the Morse index remains 6, while there is a critical value $\approx 0,6030$ above which the asymptotic Morse index increases to 8.
	This fact suggests that the set of the solutions to \eqref{H} changes in correspondence of that value of $\a$. We shall come back on this topic in next section.
	\end{remark}

In higher dimension the approximation of the parameters $\beta_i$ appearing in the computation of the Morse index can be numerically performed after having chosen a specific value for $\alpha$, which fixes the baseline Bessel function ${\mathcal J}_{\frac{N-2}{2+\a}}$. 
There is numerical evidence that \[ z_{i}(\beta+2(m-i))<z_{m}(\beta)<z_{i}(\beta+2(m-i)+1).\] 
Such estimate is not sufficient to single the  exact  Morse index out, except that in the Lane Emden case, for which one can infer that $\lim\limits_{p\to 1}J^m_i(p) \in \left( 2(m-i), 2(m-i)+1\right)$ and then, eventually
	\begin{align} \label{morsep=1le}
m(u_p) = m+\sum\limits_{i=1}^{m-1}(m-i)(N_{2i-1}+N_{2i}) \quad \text{ for $p$ close to $1$.}
\end{align}
Let us remark that in dimension  $2$ \eqref{morsep=1le} generalizes the computation in \cite{GI} (concerning the case of two nodal zones) as
\[ 	m(u_p) = m(2m-1) .	\]

\section{$n$-invariant solutions}\label{sec:sym-sol}

 We focus here on  dimension $N=2$ with the aim of producing nonradial solutions for the almost-linear Henon problem (i.e. for $p$ close to 1). This  is possible only in the framework of nodal solutions by the uniqueness result in \cite[Theorem 3.1]{AG14}, which can be easily extended to the planar case.
 Let us remark by now that the H\'enon problem is invariant for rotations around the origin, therefore any nonradial solution $u_p$ generates a family of nonradial solutions (the ones obtained by rotating $u_p$ of any given angle), and any  claim about nonradial solutions can be stated \textquotedblleft up to rotation\textquotedblright.
 
 We denote by $\mathcal E_p$ the energy functional associated to \eqref{H} i.e.
 \begin{align*}
 \mathcal E_p(u) &: =\frac 12 \int _B |\nabla u|^2-\frac 1{p+1}\int_B |x|^\a |u|^{p+1},
 \intertext{ by $\mathcal E'_p(u)$  for its Fr\'echet derivative computed at $u$, i.e.}
\mathcal E'_p (u) \cdot v & =  \int _B \nabla u \nabla v \, dx -\int_B |x|^\a |u|^{p-1} u v \, dx,
\intertext{and we introduce the Nehari set and the nodal Nehari set as}
 \mathcal N_p& :=\{v\in H^1_0(B) \, : \, v\neq 0, \ \mathcal E'_p(v)\cdot v=0 \} , 
 \\
  \mathcal N^{\nod}_p& :=\{ v\in H^1_0 \, : \,  v^{\pm}\neq 0, \ \mathcal E'_p(v)\cdot v^{\pm}=0  \} .
 \end{align*}
  Here $v^+$ and $v^-$ stand for  the positive and the negative part of $v$ respectively.
 \\
 Due to the compact embedding of $H^1_0(B)$  in $L^p(B)$ for every $p>1$,  $\min_{u\in \mathcal N^{\nod}_{p}}\mathcal E_ p(u)$ is attained at a nontrivial function, which is a weak and also classical solution to \eqref{H}, and is known as the {\it least energy nodal solution} since it changes sign  by construction.
 The existence of such least energy nodal solutions, together with some general properties,  have been established in  \cite{BW} and \cite{BWW}.
 Let us recall the ones which shall turn useful to the present purpose.
 
 \begin{proposition}\label{comeBWW}
 	Let $U_p$ be a  least energy nodal solution to \eqref{H}.
 	Then $U_p$ has exactly two nodal zones and its Morse index is 2.
 	\\
 	Further, up to rotation, $U_p$ is symmetric w.r.t.~the $x$ axis  (i.e. $U_p(x,-y)=U_p(x,y)$) and nonincreasing w.r.t. the polar variable in the semicircle $B\cap \{(x,y) : y>0\}$.
 	 \end{proposition}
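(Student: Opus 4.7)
Since $N=2$, the embedding $H^1_0(B)\hookrightarrow L^{p+1}(B)$ is compact for every $p>1$, so a standard minimization on the nodal Nehari set $\mathcal N^{\nod}_p$ produces $U_p$ attaining the infimum, which by Lagrange multipliers and elliptic regularity is a classical solution of \eqref{H}. To show $U_p$ has exactly two nodal zones, assume by contradiction that the nodal components are $\Omega_1,\dots,\Omega_k$ with $k\ge 3$. The truncations $\widetilde U_i:=U_p\,\chi_{\Omega_i}$ lie in $H^1_0(B)$ and each belongs to the Nehari set $\mathcal N_p$, because testing the equation on $\Omega_i$ gives $\|\widetilde U_i\|_{H^1_0}^2=\int_B|x|^\a|\widetilde U_i|^{p+1}$. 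Picking $\Omega_1$ positive and $\Omega_2$ negative, the function $\widetilde U_1+\widetilde U_2$ belongs to $\mathcal N^{\nod}_p$ and
\[
\mathcal E_p\bigl(\widetilde U_1+\widetilde U_2\bigr) = \mathcal E_p(U_p) - \sum_{i\ge 3}\Bigl(\tfrac12-\tfrac1{p+1}\Bigr)\|\widetilde U_i\|_{H^1_0}^2 < \mathcal E_p(U_p),
\]
contradicting the minimality of $U_p$.

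\textbf{Morse index equals $2$.} For the lower bound, since $U_p\in\mathcal N^{\nod}_p$ yields $\int_B|\nabla U_p^\pm|^2=\int_B|x|^\a|U_p|^{p-1}(U_p^\pm)^2$, we compute $\mathcal Q_{U_p}(U_p^\pm)=(1-p)\int_B|\nabla U_p^\pm|^2<0$, and the linear independence of $U_p^+$ and $U_p^-$ (disjoint supports) gives $m(U_p)\ge 2$. For the reverse inequality, one argues as in Bartsch--Weth \cite{BW}: the constraint $\mathcal N^{\nod}_p$ has codimension $2$ in $H^1_0(B)$, with the two normal directions at $U_p$ spanned by $U_p^\pm$; if there were a $3$-dimensional subspace of negativity for $\mathcal Q_{U_p}$, its intersection with $T_{U_p}\mathcal N^{\nod}_p$ would contain a tangent direction along which $\mathcal E_p$ strictly decreases to second order, contradicting the minimality of $U_p$ on $\mathcal N^{\nod}_p$.

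\textbf{Foliated Schwarz symmetry.} For this last and most delicate part I would invoke the polarization technique of \cite{BWW}: since the weight $|x|^\a$ is radial, for every closed half-plane $H$ through the origin one can define a polarized function $U_p^H$ having the same weighted $L^{p+1}$-norm as $U_p$, belonging to $\mathcal N^{\nod}_p$, and satisfying $\|\nabla U_p^H\|_2\le\|\nabla U_p\|_2$, with equality only when $U_p$ is symmetric with respect to the line $\partial H$ (or its reflection is). The minimality of $U_p$ together with a rigidity statement then forces $U_p$ to be foliated Schwarz symmetric with respect to some diameter, which after rotation can be taken to be the $x$-axis, yielding both the reflection symmetry $U_p(x,-y)=U_p(x,y)$ and the angular monotonicity on the upper semicircle. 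The main obstacle of the whole proof is precisely the careful bookkeeping required for polarizations of sign-changing functions, in order to simultaneously preserve membership in $\mathcal N^{\nod}_p$ and to retain the strictness of the polarization inequality on the gradient.
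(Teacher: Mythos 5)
The paper provides no proof of Proposition~\ref{comeBWW} at all: the paragraph preceding it explicitly defers to \cite{BW} and \cite{BWW}, and the proposition is stated as a recall of those results. Your proposal is therefore, by construction, a different route — an attempt to reconstruct the proofs rather than cite them.

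Your arguments for existence and for the fact that $U_p$ has exactly two nodal zones are correct and are essentially the standard ones from \cite{BW}: dropping all but one positive and one negative nodal component produces a competitor in $\mathcal N^{\nod}_p$ with strictly smaller energy (note, for completeness, that $U_p\chi_{\Omega_i}\in H^1_0(B)$ because $U_p$ is continuous by elliptic regularity, so each nodal domain is open). The lower bound $m(U_p)\ge 2$ via $\mathcal Q_{U_p}(U_p^\pm)<0$ is also fine. The upper bound as you wrote it, however, has a real gap. The set $\mathcal N^{\nod}_p$ is \emph{not} a $C^1$ manifold in the way you use it: the constraint functionals $v\mapsto \mathcal E_p'(v)\cdot v^\pm$ involve $v\mapsto v^\pm$, whose differentiation produces a contribution supported on the nodal set $\{U_p=0\}$ coming from the motion of $\{v>0\}$; in particular the ``normal directions'' at $U_p$ are \emph{not} spanned by $U_p^\pm$, and there is no honest tangent space $T_{U_p}\mathcal N^{\nod}_p$ on which a constrained second-variation argument can be run. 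This is precisely the technical obstruction that \cite{BW} circumvent: they work with the $C^2$ restriction of $\mathcal E_p$ to the pair-scaling cone $(s,t)\mapsto sU_p^+ + tU_p^-$ (which has a strict maximum at $(1,1)$) and obtain $m(U_p)\le 2$ by a finite-dimensional deformation/degree argument rather than by an implicit-function description of the nodal Nehari set as a codimension-$2$ submanifold. As stated, your reverse inequality argument would not survive scrutiny without that replacement.

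Your sketch of the foliated Schwarz symmetry correctly identifies the polarization technique of \cite{BWW}, and your honest remark that the delicacy lies in the bookkeeping of polarizations of sign-changing functions while preserving membership in $\mathcal N^{\nod}_p$ is exactly why this part is usually cited rather than reproduced — as the paper does.
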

 
 Of course one can repeat the same arguments on the subset of $H^1_{0}(B)$ made up by {\it radial functions}, thus ending with a {\it least energy nodal radial solution} which has exactly two nodal zones and {\it radial} Morse index equal to $2$. By the uniqueness result in \cite{NN}, we infer that the least energy nodal radial solution is nothing else that the radial solution with 2 nodal zones, to which we  we will refer as $u^{\ast}_p$ in the following.
 Next, since in \cite{AG-sez2-bis} it has been proved that the Morse index of any nodal radial solution is greater than 4, it follows that $U_p\neq u^{\ast}_p$  for every $p$.
 
 With the aim of producing other nonradial solutions, we introduce the so called {\it $n$-invariant functions}, studied in  \cite{GI} in the Lane-Emden case, and also in \cite{AG-N=2} in the H\'enon case (for large values of $p$).
 Precisely we denote by $H^1_{0,n}$ and   $\mathcal N^{\nod}_{p,n}$ the subsets of 
$H^1_{0}(B)$ and $\mathcal N^{\nod}_p$ made up by that functions which are invariant for reflection across the horizontal axis (i.e. even w.r.t.~$y$) and  for rotations of an angle $2\pi/n$.
They can be easily described by using the polar coordinates $(r,\theta)\in [0,\infty)\times[-\pi,\pi]$ defined by the relation $(x,y)= (r\cos\theta, r\sin\theta)$, by means of
\begin{align*} 
H^1_{0,n} &  : =    \left\{u\in    H^1_{0}(B)  \, : \, u(r,\theta) \hbox{ is even and } {2\pi}/n  \hbox{ periodic  w.r.t. } \theta, \, \hbox{ for every } r\in (0,1) \right\}, \\
 \mathcal N^{\nod}_{p,n} & :=  \mathcal N^{\nod}_p \cap H^1_{0,n} .
\end{align*}
Since of course also $H^1_{0,n}$  is compactly embedded in $L^p(B)$ for every $p>1$, for every integer $n$ $\mathcal E_ p(u)$ attains its minimum on  $\mathcal N^{\nod}_{p,n}$ at a nontrivial function $U_{p,n}$, which is a weak and also classical solution to \eqref{H}, and we call  {\it least energy nodal $n$-symmetric solution}.
By Proposition \ref{comeBWW} the least energy nodal solution belongs to $H^1_{0,1}$ and so it coincides with $U_{p,1}$. In particular $U_{p,1}$ is nonradial.
\\
To understand whether $U_{p,n}$ is radial or not when $n\ge 2$, we make use of the {\it $n$-Morse index}, i.e. the maximal dimension of a subspace of $H^1_{0,n}$ in which the quadratic form $\mathcal Q_{u}$ \eqref{Qp} is negative defined, or equivalently, the number of negative eigenvalues of the linearized operator $L_{u}$, according to \eqref{eigenvalue-problem}, which have corresponding eigenfunction in $H^1_{0,n}$. 
We refer hereafter to $m_n(u)$ as the $n$-symmetric Morse index of a solution $u$. 

Repeating the arguments of \cite{BW} in the $n$-invariant functional space  (see also \cite{AG-N=2}) one can see that 
\begin{lemma}\label{n-mi-unod}
The $n$-Morse index $U_{p,n}$ 
	 is equal to 2 for every integer $n$ and  $p>1$.
\end{lemma}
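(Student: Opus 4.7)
The plan is to establish $m_n(U_{p,n})=2$ by adapting the standard Bartsch--Weth argument for the nodal least energy solution to the $n$-invariant setting $H^1_{0,n}$, proving the two inequalities separately. Two preliminary observations are in order. First, since both the horizontal reflection and the rotation by $2\pi/n$ are isometries commuting with the truncations $(\cdot)^\pm$, the positive and negative parts $U_{p,n}^\pm$ still lie in $H^1_{0,n}$. Second, by the principle of symmetric criticality $U_{p,n}$ is a critical point of $\mathcal{E}_p$ on the whole $H^1_{0,n}$, so $\mathcal{E}'_p(U_{p,n})\cdot U_{p,n}^\pm=0$, i.e.\ $\int_B|\nabla U_{p,n}^\pm|^2=\int_B|x|^\alpha|U_{p,n}|^{p-1}(U_{p,n}^\pm)^2$.

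For the lower bound $m_n(U_{p,n})\ge 2$ I evaluate $\mathcal{Q}_{U_{p,n}}$ on the $2$-dimensional subspace $V_0=\mathrm{span}\{U_{p,n}^+,U_{p,n}^-\}\subset H^1_{0,n}$. The Nehari identity above gives $\mathcal{Q}_{U_{p,n}}(U_{p,n}^\pm)=(1-p)\int_B|\nabla U_{p,n}^\pm|^2<0$, and the disjoint supports of $U_{p,n}^+$ and $U_{p,n}^-$ kill the cross term in any linear combination, so $\mathcal{Q}_{U_{p,n}}$ is negative definite on $V_0$.

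For the upper bound $m_n(U_{p,n})\le 2$ I argue by contradiction: suppose there is a $3$-dimensional subspace $V\subset H^1_{0,n}$ on which $\mathcal{Q}_{U_{p,n}}$ is negative definite. Since $U_{p,n}$ is a classical critical point of $\mathcal{E}_p$, Taylor expansion yields
$$\mathcal{E}_p(U_{p,n}+v)=\mathcal{E}_p(U_{p,n})+\tfrac12\mathcal{Q}_{U_{p,n}}(v)+o(\|v\|^2),$$
so $\mathcal{E}_p(U_{p,n}+v)<\mathcal{E}_p(U_{p,n})$ for all small $v\in V\setminus\{0\}$. A direct computation shows that the map $(s,t)\mapsto\mathcal{E}_p(sU_{p,n}^++tU_{p,n}^-)$ has a strict nondegenerate maximum at $(s,t)=(1,1)$. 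Hence for $v$ small in $V$ the implicit function theorem, applied to
$$\Psi(v,s,t)=\bigl(\mathcal{E}'_p(\widetilde u)\cdot\widetilde u^+,\ \mathcal{E}'_p(\widetilde u)\cdot\widetilde u^-\bigr),\qquad \widetilde u:=s(U_{p,n}+v)^++t(U_{p,n}+v)^-,$$
produces scalars $s(v),t(v)$ close to $1$ with $\widetilde u_v\in\mathcal{N}^{\nod}_{p,n}\cap H^1_{0,n}$. The same strict maximality forces $\mathcal{E}_p(\widetilde u_v)\le\mathcal{E}_p(U_{p,n}+v)<\mathcal{E}_p(U_{p,n})$, contradicting the minimality of $U_{p,n}$ on $\mathcal{N}^{\nod}_{p,n}$.

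The main technical obstacle is the construction of this $n$-invariant projection onto $\mathcal{N}^{\nod}_{p,n}$. One has to verify that $U_{p,n}+v$ still changes sign and has both parts in $H^1_{0,n}$ for small $v\in V$, which follows because $U_{p,n}\in C^1(\overline B)$ has a transverse nodal set (its full Morse index is finite, so the linearized operator has no kernel along the nodal set, cf.\ Proposition \ref{comeBWW}), and because the truncations and the rescaling commute with the symmetry group. Once these points are granted the contradiction unfolds as in \cite{BW}, yielding the sought upper bound and hence the claim.
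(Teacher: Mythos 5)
Your overall plan — prove $m_n(U_{p,n})\ge 2$ by testing $\mathcal{Q}_{U_{p,n}}$ on $\mathrm{span}\{U_{p,n}^+,U_{p,n}^-\}$ and $m_n(U_{p,n})\le 2$ by the Bartsch--Weth minimality argument transplanted to $H^1_{0,n}$ — is exactly what the paper has in mind (it simply cites \cite{BW}, see also \cite{AG-N=2}), and the lower bound part of your proof is correct. However, the final inequality in your upper bound argument is reversed, and this is not a typo but a genuine gap. The Nehari projection $\widetilde u_v = s(v)(U_{p,n}+v)^+ + t(v)(U_{p,n}+v)^-$ is, by construction, the \emph{maximizer} of $(s,t)\mapsto\mathcal{E}_p\bigl(s(U_{p,n}+v)^+ + t(U_{p,n}+v)^-\bigr)$, since that fibering map has a unique nondegenerate maximum near $(1,1)$. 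Hence $\mathcal{E}_p(\widetilde u_v)\ge\mathcal{E}_p(U_{p,n}+v)$, not $\le$, and combining this with $\mathcal{E}_p(U_{p,n}+v)<\mathcal{E}_p(U_{p,n})$ produces no contradiction whatsoever: the projected point may very well have higher energy than $U_{p,n}$.

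To repair the argument along these lines one has to work harder: one must first observe that the three-dimensional negative subspace $V$ can be chosen to \emph{contain} $\mathrm{span}\{U_{p,n}^+,U_{p,n}^-\}$ (any negative subspace of a quadratic form can be extended inside a maximal one, which here has dimension $\ge 3$ by the contradiction hypothesis). Then for $v\in V$ small and transverse to $\mathrm{span}\{U_{p,n}^+,U_{p,n}^-\}$ one expands the projection to first order, $\widetilde u_v = U_{p,n} + v + (a\cdot v)\,U_{p,n}^+ + (b\cdot v)\,U_{p,n}^- + o(|v|)$, so that $\widetilde u_v - U_{p,n}$ lies, up to $o(|v|)$, in $V$. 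Then $\mathcal{E}_p(\widetilde u_v)=\mathcal{E}_p(U_{p,n}) + \tfrac12\mathcal{Q}_{U_{p,n}}(\widetilde u_v - U_{p,n}) + o(|v|^2) < \mathcal{E}_p(U_{p,n})$, which is the sought contradiction with minimality on $\mathcal{N}^{\nod}_{p,n}$. This expansion is itself delicate, because the truncations $w\mapsto w^\pm$ are only Lipschitz; this is precisely the technical core of the Bartsch--Weth proof, and your sketch skips over it. As it stands your upper-bound step does not close.
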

	
For what concerns the least energy nodal radial solution $u^{\ast}_p$, its $n$ Morse index can be computed starting from the singular eigenvalues of \eqref{radial-singular-problem-LE}  thanks to the analysis performed in \cite{AG-sez2}.

\begin{lemma}\label{n-mi-2}
For every positive integer $n$ and $p>1$ 
	\begin{equation}\label{n-morse-formula}
m_n(u^{\ast}_p)=2+\sum_{i=1}^2\left[ \frac{1}{n}\left\lceil \frac{2+\a}{2}\sqrt{- \nu_i(p)} -1\right\rceil \right] .
	\end{equation}
	Here $\nu_1(p)$ and $\nu_2(p)$ are the only negative eigenvalues of \eqref{radial-singular-problem-LE} related to \\ $W^{\ast}_p(t)=p\left(\frac{2+\a}{2}\right)^{2}\left|u^{\ast}_p(t^{\frac{2}{2+\a}})\right|^{p-1}$. 
\end{lemma}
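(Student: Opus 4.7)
My strategy is to specialise the singular Sturm--Liouville description of the Morse index in Proposition~\ref{prop:morse-formula} to dimension $N=2$ and then to restrict the count to the eigenfunctions lying in $H^1_{0,n}$.

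First I would verify that the alternative characterisation of the Morse index via the singular problem \eqref{sing-eigenvalue-problem} (from \cite[Proposition~1.1]{AG-sez2}) transfers to the $n$-invariant setting: $m_n(u^{\ast}_p)$ equals the number, counted with multiplicity, of negative eigenvalues $\widehat\L_k(p)$ of \eqref{sing-eigenvalue-problem} whose eigenfunction $\widehat\psi_k$ belongs to $H^1_{0,n}$. This uses that $L_{u^{\ast}_p}$ commutes with the reflection $(x,y)\mapsto(x,-y)$ and with the rotation by $2\pi/n$, so its negative spectral subspace splits orthogonally along the symmetry decomposition; moreover the radial cut-off used in \cite{AG-sez2} to pass between $H^1_0(B)$ and $\mathcal H_0$ manifestly preserves $n$-invariance, hence the $n$-Morse index in $H^1_{0,n}$ coincides with the one computed on $\mathcal H_0\cap H^1_{0,n}$.

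Next I would exploit the decomposition \eqref{decomposition}: every eigenfunction factorises as $\widehat\psi_k(x)=\phi_i(|x|^{(2+\a)/2})\,Y_j(x/|x|)$ with eigenvalue $\big(\tfrac{2+\a}{2}\big)^2\nu_i(p)+\l_j$. In $N=2$ one has $\l_j=j^2$, and the eigenspaces on $\mathbb S_1$ are $\mathrm{span}\{1\}$ for $j=0$ and $\mathrm{span}\{\cos(j\theta),\sin(j\theta)\}$ for $j\ge 1$. The two conditions defining $H^1_{0,n}$, namely evenness in $\theta$ and $2\pi/n$-periodicity, single out exactly $Y_0=1$ and $\cos(kn\theta)$ for $k\ge 1$, each with multiplicity one. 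Since $u^{\ast}_p$ has $m=2$ nodal zones, by \eqref{nl<k-general-H}--\eqref{num>k-general-H} the radial singular problem \eqref{radial-singular-problem-LE} has exactly the two negative eigenvalues $\nu_1(p)<\nu_2(p)<0$.

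Combining these, the negative $n$-invariant eigenvalues of the linearised problem are in bijection with pairs $(i,k)\in\{1,2\}\times\mathbb Z_{\ge 0}$ for which $(kn)^2<\big(\tfrac{2+\a}{2}\big)^2(-\nu_i(p))$. The case $k=0$ contributes for both $i$ (yielding the leading summand $2$), and the number of $k\ge 1$ with $kn<\tfrac{2+\a}{2}\sqrt{-\nu_i(p)}$ is $\lceil \tfrac{2+\a}{2n}\sqrt{-\nu_i(p)}-1\rceil$; via the elementary identity $\lceil a/n-1\rceil=\lfloor\lceil a-1\rceil/n\rfloor$ (valid for $a>0$ and $n\in\mathbb N$) this equals $\big\lfloor\lceil\tfrac{2+\a}{2}\sqrt{-\nu_i(p)}-1\rceil/n\big\rfloor$, which is the summand appearing in \eqref{n-morse-formula}. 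The main obstacle I anticipate is the very first step: carefully transferring the singular-eigenvalue characterisation of the Morse index from $\mathcal H_0$ to $\mathcal H_0\cap H^1_{0,n}$ and checking that the cut-off approximation commutes with the $n$-symmetry. Once this symmetric reduction is justified, the remainder is a purely combinatorial tally of the angular factors in \eqref{decomposition} that survive the $n$-invariance constraint, entirely parallel to the derivation of \eqref{morse-formula}.
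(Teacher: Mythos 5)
Your proof is correct and follows essentially the same route as the paper. The paper simply cites \cite[Corollary~4.11]{AG-sez2} for the $n$-invariant analogue of formula \eqref{morse-formula}, namely $m_n(u^{\ast}_p)=\sum_{i=1}^{2}\sum_{j=0}^{\lceil J_i(p)-1\rceil}N_j^{n}$, whereas you sketch a re-derivation of that corollary via the symmetry decomposition of the negative spectral subspace; the subsequent observation that in $N=2$ the only $n$-invariant spherical harmonics at level $j$ are $1$ (for $j=0$) and $\cos(j\theta)$ (for $j$ a positive multiple of $n$), together with the counting of admissible multiples of $n$ below $\frac{2+\a}{2}\sqrt{-\nu_i(p)}$, is identical to the paper's argument, and your explicit ceiling/floor identity correctly matches the bracket $[\cdot]$ in \eqref{n-morse-formula} interpreted as the integer part.
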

\begin{proof}
 \cite[Corollary 4.11]{AG-sez2}  yields that the $n$-Morse index of a radial solution is 
\[
m_n(u^{\ast}_p)=\sum\limits_{i=1}^{2}\sum\limits_{j=0}^{\lceil  \frac{2+\a}2\sqrt{-\nu_i(p)}-1\rceil } N_j^{n} 
\]
where $N^n_j$ stands for the number of linearly independent  eigenfunctions related to the $j^{th}$ eigenvalue  of the Laplace Beltrami operator on the sphere $\mathbb S_1$ which  are $n$-invariant.
Recalling that the Laplace Beltrami eigenfunctions are linear combinations of  $\cos j\theta$ and $\sin j\theta$, one sees that $N_j^n=1$ if $j=0$ or is a multiple of $n$ (since in that case $\cos j\theta$ is $n$-invariant), and zero otherwise.
	Hence for $i=1,2$ fixed, we have 1  eigenfunction related to $j=0$ (by \eqref{nl<k-general-H}, \eqref{num>k-general-H}), and then another 
	eigenfunction for every index $j=hn$  where $h$ is an integer such that $1\le  h \le   \frac{1}{n}\left\lceil \frac{2+\a}{2}\sqrt{- \nu_i(p)}-1\right\rceil$, which totally gives $1+  \left[ \frac{1}{n}\left\lceil \frac{2+\a}{2}\sqrt{- \nu_i(p)} -1\right\rceil \right]$ independent eigenfunctions in $H_{0,n}$ and concludes the proof.
\end{proof}

Lemmas \ref{n-mi-unod} and \ref{n-mi-2}, together with the knowledge of the asymptotic Morse index of $u^\ast_p$ stated by Theorem \ref{mi-p=1}, yield that some of the least-energy $n$-symmetric nodal solutions are nonradial for $p$ close to 1.

\begin{corollary}\label{unod-nonradial}
	There exists $\bar p = \bar p(\alpha)>1$ such that $U_{p,n}$ are nonradial when $p\in (1, \bar p)$ and $n=1, \dots \left\lceil\frac{2+\a}{2}\beta-1\right\rceil$, where 
	$\beta \approx 2,\!305$ is the fixed number mentioned in Remark \ref{rem:N=2}.
\end{corollary}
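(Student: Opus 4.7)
The plan is a contradiction argument comparing the $n$-symmetric Morse indices of $U_{p,n}$ and of the radial competitor $u^{\ast}_p$. Assume that $U_{p,n}$ is radial. Since every radial function is $n$-invariant, $u^{\ast}_p\in H^1_{0,n}$, and the minimality of $U_{p,n}$ on $\mathcal N^{\nod}_{p,n}$ gives $\mathcal E_p(U_{p,n})\le \mathcal E_p(u^{\ast}_p)$. A radial $U_{p,n}$ then lies in the radial nodal Nehari set with energy at most $\mathcal E_p(u^{\ast}_p)$, and since $u^{\ast}_p$ is, by its definition recalled above and by the uniqueness result of \cite{NN}, the unique (up to sign) radial minimizer of $\mathcal E_p$ on this set, I conclude $U_{p,n}=\pm u^{\ast}_p$.

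The contradiction now comes from the $n$-symmetric Morse index. Lemma \ref{n-mi-unod} gives $m_n(U_{p,n})=2$ for every $n$ and every $p>1$, and I claim that instead $m_n(u^{\ast}_p)\ge 3$ for $p$ close to $1$ and $n\le \lceil\frac{2+\alpha}{2}\beta-1\rceil$. Indeed, Proposition \ref{formulainutile} applied to $u^{\ast}_p$ (which has two nodal zones) yields $\sqrt{-\nu_1(p)}\to\beta$ as $p\to 1$, where $\beta=\beta_1\approx 2.305$ is the value recalled in Remark \ref{rem:N=2}. Since $\lceil x\rceil-1<x$ for every real $x$, the inequality $n\le \lceil\frac{2+\alpha}{2}\beta-1\rceil$ entails $n-1<\frac{2+\alpha}{2}\beta-1$ strictly. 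By continuity, $\frac{2+\alpha}{2}\sqrt{-\nu_1(p)}-1>n-1$ for $p$ sufficiently close to $1$, whence $\lceil\frac{2+\alpha}{2}\sqrt{-\nu_1(p)}-1\rceil\ge n$ and the $i=1$ summand in the formula of Lemma \ref{n-mi-2} contributes at least $1$. Thus $m_n(u^{\ast}_p)\ge 3$, contradicting $U_{p,n}=\pm u^{\ast}_p$; taking $\bar p(\alpha)$ as the minimum of the finitely many thresholds obtained as $n$ ranges over $\{1,\dots,\lceil\frac{2+\alpha}{2}\beta-1\rceil\}$ produces a uniform $\bar p(\alpha)>1$.

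The step I expect to require the most care is the reduction \emph{$U_{p,n}$ radial $\Rightarrow U_{p,n}=\pm u^{\ast}_p$}: it relies on the characterization of $u^{\ast}_p$ as the minimum-energy radial \emph{nodal} solution, not merely as the unique radial solution with two nodal zones. Should this characterization be unavailable in a self-contained way, the same conclusion can be drawn directly from the $n$-Morse index: a radial nodal solution with $m$ nodal zones has $n$-symmetric Morse index at least $m$, since the $j=0$ spherical harmonic in the decomposition \eqref{decomposition} is $n$-invariant for every $n$ and contributes once per nodal zone; then $m_n(U_{p,n})=2$ forces $m=2$, and the uniqueness result in \cite{NN} again gives $U_{p,n}=\pm u^{\ast}_p$.
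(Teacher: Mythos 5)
Your argument follows the same path as the paper's: if $U_{p,n}$ were radial it would coincide with $\pm u^{\ast}_p$ by minimality, and then Lemma \ref{n-mi-unod} (giving $m_n(U_{p,n})=2$), Lemma \ref{n-mi-2} (the formula for $m_n(u^{\ast}_p)$), and the limit $\sqrt{-\nu_1(p)}\to\beta$ from Proposition \ref{formulainutile} deliver the contradiction $m_n(u^{\ast}_p)\ge 3$ for $p$ near $1$ whenever $n<\frac{2+\alpha}{2}\beta$. The ceiling-function arithmetic you spell out and the choice of a uniform $\bar p(\alpha)$ over the finitely many $n$ are both handled correctly and match what the paper does implicitly.
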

\begin{proof}
Since $U_{p,n}$ is a least energy nodal solution and $H^1_{0,\rad}\subset H^1_{0,n}$, if it is radial  it coincides with $u^\ast_{p}$, up to the sign. So by Lemma \ref{n-mi-unod} $U_{p,n}$ is nonradial whenever $m_n(u^\ast_p)>2$, which in turn,  by Lemma \ref{n-mi-2},  holds true provided that  $n\le \left\lceil \frac{2+\a}{2}\sqrt{- \nu_1(p)} -1\right\rceil $, or equivalently   $n<\frac{2+\a}{2}\sqrt{- \nu_1(p)}$.
Eventually the claim follows since $\frac{2+\a}{2}\sqrt{- \nu_1(p)}\to \frac{2+\a}{2}\beta$  by \eqref{nup=1}.
\end{proof}

Nothing assures so far that this construction really produces $\left\lceil\frac{2+\a}{2}\beta-1\right\rceil$ distinct nonradial nodal solutions to \eqref{H}: in principle $U_{p,n}$ could coincide. Further nothing has been said about the other $n$-invariant solutions with larger $n$.
We find an answer to  these questions by inspecting the asymptotic profile of $U_{p,n}$  when $p\to 1$.
This issue has, in the author's opinion, interest for itself and will be the subject of next subsection.

\subsection{Asymptotic profile of  $n$-symmetric solutions}\label{ss:sym-asympt}

Here we describe the profile  of $U_{p,n}$ when $p\to 1$. 
First we obtain a bound for $\|U_{p,n}\|_{\infty}^{p-1}$ which ensures, via Lemma \ref{lem:conv-eigenv}, that $U_{p,n}/\|U_{p,n}\|_{\infty}$ converges to an eigenfunction of \eqref{prima-autof-weight}. Next we see that these limit eigenfunctions are nonradial and distinct one from another for $n=1, \dots \left\lceil\frac{2+\a}{2}\beta-1\right\rceil$, 
 while for the subsequent values of $n$ they coincide with the second radial eigenfunction. 

Obtaining a bound for $\|u\|_{\infty}^{p-1}$ when $u$ is a $n$-invariant solution  is far harder than in the radial setting, because it is not known a-priori where the extremal points accumulate when $p\to 1$. We show that a sufficient condition  is that the $n$-Morse index is bounded from above.
Such condition is certainly satisfied by the least energy $n$-invariant solutions thanks to Lemma \ref{n-mi-unod}.
\\
To this aim  we focus on the circular sector
\[ S_n=\{(r,\theta) \, : \, 0<r<1 , \ 0<\theta<\pi/n\},\]
and introduce some notations and preliminary materials.
The boundary of $S_n$ decomposes as $\partial S_n=\{O, A, B\}\cup\Gamma_1\cup \Gamma_2\cup \Gamma_3 $ where $O$ is the origin, and in standard coordinates $A=(\cos\frac{\pi}{n}, \sin\frac{\pi}{n} )$, $B=(1,0)$ and 
\[\begin{array}{cc}
\multicolumn{2}{c} { \Gamma_1=\left\{(x,y) \, : \, x^2+y^2=1 , \ \cos\frac{\pi}{n} < x < 1 , \, 0< y < \sin\frac{\pi}{n} \right\},} \\[4pt]
\Gamma_2= \left\{(x,y) \, : \, 0<x<1, \ y=0 \right\}, &  \Gamma_3  = \left\{(x,y) \, : \, \frac{y}{x} = \tan\frac {\pi}{n} , \ 0< x < \cos\frac{\pi}{n}\right\}.
\end{array}\]
Of course when $n=2$ we mean $\Gamma_3  = \left\{(x,y) \, : \, x=0, 0<y<1 \right\}$.
Next we set 
\begin{align*}
\widetilde S_n & = \overline{S}_n \cap B = S_n\cup \Gamma_2\cup\Gamma_3\cup \{O\}, \\
\mathcal C_n &= \{ v \in C(\overline{ S}_n )  \cap C^1(\widetilde S_n) \, : \,  v=0 \text{ on } \Gamma_1 , \,  \partial_{\nu} v =0 \text{ on } \Gamma_2\cup \Gamma_3\},
\end{align*}
where $\nu$ stands for the outer normal vector to the boundary of $S_n$.
Now  the restriction to $S_n$ of any function in $H^1_{0,n}\cap C^1(B)$ belongs to $\mathcal C_n$, and viceversa any function in $\mathcal C_n$ can be extended (by symmetry and periodicity) to a function in $H^1_{0,n}\cap C^1(B)$. In particular $u_p\in H^1_{0,n}$ is a classical solution of \eqref{H} if and only if its restriction to $S_n$ solves the mixed boundary problem
\begin{equation}\label{eq:spicchio}
\begin{cases}
-\Delta u = |x|^{\a} |u|^{p-1} u & \text{ in } S_n ,\\
u= 0 & \text{ on } \Gamma_1 , \\
\partial_{\nu} u =0 & \text{ on } \Gamma_2\cup \Gamma_3 ,
\end{cases}
\end{equation}
and it is elementary to check that 
\begin{lemma}\label{n-mi-unod-spicchio}
Let $u\in H^1_{0,n}$ be a solution of \eqref{H} whose $n$-Morse index is $m$. Then the eigenvalue problem
	\begin{equation}\label{e-p:spicchio}
	\begin{cases}
	-\Delta w = \left(p|x|^{\a} |u|^{p-1} +\mu \right) w& \text{ in } S_n ,\\
	w= 0 & \text{ on } \Gamma_1 , \\
	\partial_{\nu} w =0 & \text{ on } \Gamma_2\cup \Gamma_3 .
	\end{cases}
	\end{equation}
	has exactly $m$ negative eigenvalues.
	Equivalently, the maximal dimension of a subspace of $\mathcal C_n$ where the quadratic form 
	\begin{equation}\label{q-spicchio}
	\mathcal Q_u (w)= \int_{S_n} \left(|\nabla w|^2 - p|x|^{\a} |u|^{p-1} w^2\right) dx
	\end{equation}
	is negative defined is exactly $m$.
	\end{lemma}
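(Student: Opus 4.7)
The basic idea is that the $n$-invariant functions on $B$ are in natural one-to-one correspondence with functions on the fundamental sector $S_n$ that satisfy the Dirichlet condition on $\Gamma_1$ and the Neumann condition on the symmetry sides $\Gamma_2, \Gamma_3$, and under this correspondence the linearized quadratic form on $B$ coincides, up to the factor $2n$, with the one on $S_n$. Because both the $n$-Morse index and the number of negative eigenvalues of \eqref{e-p:spicchio} are determined by the same variational principle (Courant--Fischer), the two numbers must agree.

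\textbf{Step 1 (the reflection/rotation extension).} Given $v\in \mathcal{C}_n$ I extend it first to the circular sector $\{0<r<1,\ -\pi/n<\theta<\pi/n\}$ by even reflection across $\Gamma_2$, and then periodically by rotations of angle $2\pi/n$, obtaining a function $E(v)$ defined on the whole of $B$ and vanishing on $\partial B$. The Neumann condition $\partial_\nu v = 0$ on $\Gamma_2\cup\Gamma_3$ is exactly what is needed so that $E(v)$ has a global weak derivative across the reflection axes, hence $E(v)\in H^1_{0,n}$; conversely any $w\in H^1_{0,n}\cap C^1(B)$ restricted to $S_n$ belongs to $\mathcal{C}_n$. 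This sets up a linear bijection between $\mathcal{C}_n$ and a dense subspace of $H^1_{0,n}$ (and it extends by completion to an isometry, up to the normalizing factor $\sqrt{2n}$, between the Hilbert space associated with \eqref{e-p:spicchio} and $H^1_{0,n}$).

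\textbf{Step 2 (quadratic forms agree up to a factor).} Since $B$ is tiled, modulo null sets, by the $2n$ images of $S_n$ under the symmetry group, and $|x|^\alpha|u|^{p-1}$ is itself $n$-invariant, for every $v\in\mathcal{C}_n$ with $V=E(v)$ I get
\begin{equation*}
\int_B|\nabla V|^2\,dx = 2n\int_{S_n}|\nabla v|^2\,dx,\qquad \int_B|x|^\alpha|u|^{p-1}V^2\,dx = 2n\int_{S_n}|x|^\alpha|u|^{p-1}v^2\,dx,
\end{equation*}
so that $\mathcal{Q}_u(V) = 2n\,\mathcal{Q}_u(v)$, where on the right-hand side $\mathcal{Q}_u$ denotes the quadratic form on $S_n$ defined in \eqref{q-spicchio}. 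The bijection of Step 1 therefore identifies subspaces of $\mathcal{C}_n$ on which the $S_n$-form is negative definite with subspaces of $H^1_{0,n}$ on which $\mathcal{Q}_u$ is negative definite, preserving dimensions.

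\textbf{Step 3 (from quadratic forms to eigenvalues).} By the standard min-max characterization, the number of negative eigenvalues of \eqref{e-p:spicchio}, counted with multiplicity, coincides with the maximal dimension of a subspace of the ambient Hilbert space on which the associated quadratic form is negative definite, and similarly the $n$-Morse index $m_n(u)$ is the maximal dimension of a subspace of $H^1_{0,n}$ on which $\mathcal{Q}_u$ is negative definite. Combining this with Step 2 gives the equivalence of the two formulations and the equality of the count with $m_n(u)=m$.

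\textbf{Expected obstacle.} The substantive point is the regularity/density part of Step 1: namely, that even reflection across $\Gamma_2\cup\Gamma_3$ of a $\mathcal{C}_n$-function (which a priori is only $C^1$ up to these sides with vanishing normal derivative) produces a genuine $H^1_{0,n}$ function, and conversely that elements of $H^1_{0,n}$ can be approximated by restrictions to $S_n$ of smooth $n$-invariant functions — so that one is entitled to use the variational characterization in either space. Once this density/isometry issue is settled, the rest is a direct application of Courant--Fischer.
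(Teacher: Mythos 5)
Your proposal is correct and is precisely the argument the paper has in mind: the paper explicitly sets up the correspondence between $\mathcal C_n$ and $H^1_{0,n}\cap C^1(B)$ via reflection and periodicity in the paragraph immediately preceding the lemma, and then dismisses the lemma as "elementary to check" — which is your Steps 2 and 3 (the quadratic forms agree up to the factor $2n$, and the min-max characterization transfers the count of negative directions). Your flagged "expected obstacle" (density of smooth $n$-invariant/sector functions so that Courant–Fischer applies in either space) is the only point requiring care, and it is handled in the standard way since even reflection of an $H^1$-function across a flat boundary stays in $H^1$.
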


The bound for $\|u_{p}\|_{\infty}^{p-1}$ is obtained by a refined  blow-up argument which starts  from \eqref{eq:spicchio} and ends up contradicting Lemma \ref{n-mi-unod-spicchio}. The blow-up procedure can bring to different domains for the limit problem, specifically $\R^2$ or an half space, or an angle. 
In this perspective we point out  that the limit eigenvalue problems have infinite Morse index in the following sense.

\begin{lemma}\label{morse-limite-infinito} Let $\a\ge 0$ and $\Sigma\subset \R^2$ (to be specified later). We  consider the quadratic form
		\[Q^{\a}_{\Sigma}(\phi)=\int_{\Sigma} \left(|\nabla \phi|^2-|x|^{\a}\phi^2\right) dx .\]
	Then for every integer $k$ there exist $R>0$ and $k$ linearly independent functions with support contained in $\overline B_R$ which are zero on $\partial B_R \cap \Sigma$, are continuous on the closure of $\partial B_R \cap \Sigma$ and $C^1$ in its interior,  satisfy 
	\[Q^{\a}_{\Sigma}(\phi_{j})<0\quad \text{ as }j=1, \dots k\] 
	and
	\begin{enumerate}
		\item if $\Sigma=\R^2$, then $\phi_{j}\in H^1_{0,n}$.
		\item If $\Sigma$ is an half-plane of type $\{Q : Q\cdot P <0\}$,  then $\phi_j$ are symmetric with respect to the direction orthogonal to $P$ and there exists $\delta>0$ such that $\phi_j=0$  in a strip $\{Q  : Q\cdot P \ge -\delta\}$.
		\end{enumerate}
\end{lemma}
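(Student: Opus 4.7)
The plan is to build the $\phi_j$ from large-scale bumps and exploit the fact that in dimension two the Dirichlet integral is scale invariant whereas $\int|x|^\alpha\phi^2$ scales like $\rho^{\alpha+2}$ under dilation. Concretely, I would fix a radial $\eta\in C^\infty_c(B_1(0))$ with $\eta\not\equiv 0$ and, for parameters $d>1$ and $\rho>0$, set
\[\eta_{d,\rho}(x):=\eta\!\left(\frac{x-d\rho\, e_1}{\rho}\right),\]
a bump of radius $\rho$ centered at $d\rho\, e_1$. The standard change of variable gives
\[Q^\alpha_{\R^2}(\eta_{d,\rho})=\int|\nabla\eta|^2\,dy-\rho^{\alpha+2}\int|y+d\,e_1|^\alpha\eta(y)^2\,dy,\]
which tends to $-\infty$ as $\rho\to\infty$ since $\alpha+2>0$. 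Picking a geometric sequence $\rho_1<\cdots<\rho_k$ with large enough ratio, the $\eta_{d,\rho_j}$ are supported in pairwise disjoint annuli around the origin and each carries a strictly negative $Q^\alpha$.

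For case (1), I would take $d=d_n>1/\sin(\pi/n)$ (any positive number works when $n=1$) and set
\[\phi_j(x):=\sum_{\ell=0}^{n-1}\eta_{d_n,\rho_j}\!\bigl(T_{-2\pi\ell/n}\,x\bigr),\qquad T_\theta=\text{rotation of angle }\theta.\]
Because $\eta$ is radial each $\phi_j$ is automatically invariant under the reflection across the horizontal axis and therefore lies in $H^1_{0,n}$; the choice of $d_n$ makes the $n$ rotated copies pairwise disjoint, so additivity of the form on disjoint supports yields $Q^\alpha_{\R^2}(\phi_j)=n\,Q^\alpha_{\R^2}(\eta_{d_n,\rho_j})<0$, and the $\phi_j$ are linearly independent since their supports live in disjoint annuli. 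For case (2), I would rotate coordinates so that $P=e_1$ and $\Sigma=\{x_1<0\}$, and put $\phi_j(x):=\eta_{2,\rho_j}(-x_1,x_2)$, a bump of radius $\rho_j$ centered at $-2\rho_j e_1$; radiality of $\eta$ forces $\phi_j$ to be even in $x_2$ (the direction orthogonal to $P$), its support lies in $\{x_1\le-\rho_1\}$ so that $\phi_j=0$ on the strip $\{Q\cdot P\ge-\delta\}$ with $\delta=\rho_1$, and the scaling identity again gives $Q^\alpha_\Sigma(\phi_j)<0$.

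In both cases I would finally take $R$ strictly larger than the outer radius of the largest annulus, so that every $\phi_j$ is supported in $\overline B_R$ and vanishes on $\partial B_R\cap\Sigma$; the $C^0/C^1$ statements are inherited from the smoothness of $\eta$. The only nontrivial step is the geometric book-keeping needed to coordinate the parameters $d$, the ratio between successive $\rho_j$, and the outer cutoff, so that all symmetric images at all scales fit disjointly into $B_R$. I expect this to be the main obstacle, although it is purely combinatorial: the scaling identity above already supplies as much negativity of the quadratic form as the target $k$ requires, uniformly in $\alpha\ge 0$, so there is no genuine analytic difficulty.
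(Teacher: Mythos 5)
Your proof is correct, but it is built on a genuinely different idea from the paper's. The paper constructs the $\phi_j$ as explicit Dirichlet eigenfunctions of the weighted operator on $B_R$, namely $\phi_j(r,\theta)=\jcal_{\frac{2jn}{2+\alpha}}\bigl(\frac{Z_j}{R}r\bigr)\cos(jn\theta)$, so that $-\Delta\phi_j=|x|^{\alpha}(Z_j/R)^2\phi_j$ and $Q^{\alpha}_{\R^2}(\phi_j)=\bigl((Z_j/R)^2-1\bigr)\int_{B_R}|x|^{\alpha}\phi_j^2\,dx<0$ once $R>Z_k$, with linear independence coming from the distinct angular frequencies $\cos(jn\theta)$; for the half-plane it multiplies these by a cutoff $\xi$ and lets the cutoff width $\delta\to 0$. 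You instead rely on the two-dimensional scale invariance of the Dirichlet energy versus the $\rho^{\alpha+2}$ growth of the weighted mass, placing rescaled bumps in disjoint annuli (and their $n$-fold rotates, or reflected copies in the half-plane), so that negativity of $Q^{\alpha}$ is automatic for $\rho$ large and linear independence comes for free from disjoint supports. Your route is more elementary and self-contained — it needs no knowledge of Bessel functions, handles items (1) and (2) uniformly, and avoids the cutoff/limit argument — at the cost of more geometric bookkeeping to fit all scales and all rotated/reflected copies disjointly into a single $\overline B_R$, which you correctly flag and which is indeed only combinatorial. The paper's route is tied to the spectral picture used elsewhere in the paper (Lemma~\ref{lem:autov-peso}) and gives functions that are exact eigenfunctions on $B_R$, which is aesthetically closer to the surrounding analysis but not logically required here. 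Both constructions produce the required $C^1$ functions vanishing on $\partial B_R\cap\Sigma$, with the symmetry in item (1) from the evenness of $\cos(jn\theta)$ (paper) or the radiality of the bump together with the reflection-invariance of the set of centers (yours), and in item (2) from the evenness of $\phi_j$ in the $y$-direction in either case.
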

\begin{proof}
	We use the notations of Lemma \ref{lem:autov-peso} and define, in polar coordinates
	\begin{align*}
	\phi_{j}(r, \theta) & = \mathcal J_{\frac{2jn}{2+\a}}\left(\frac{Z_j}{R} r\right) \cos(jn\theta) \quad & \text{ in } B_R , 
	\end{align*}
	for $Z_j=z_1\left(\frac{2jn}{2+\a}\right)$.
They satisfy $-\Delta \phi_{j} = |x|^{\a}(\frac{Z_j}{R})^2 \phi_{j}$ pointwise on $B_R$  and vanish at $\partial B_R$.
Extending them to zero outside $B_R$, multiplying the equation by $\phi_{j}$ and integrating by parts on $B_R$ (taking advantage from the boundary condition on $\partial B_R$) gives
	\begin{align*}
Q^{\a}_{\R^2}(\phi_{j}) = \int_{B_R}\left( |\nabla \phi_{j}|^2-|x|^\a\phi_{j}^2\right) dx = \left(\left(\frac{Z_j}{R} \right)^2 -1\right) \int_{B_R}|x|^\a \phi_{j}^2 dx <0 
	\end{align*}
	for $j=1, \dots k$ provided that $R> Z_k$.
	So we have obtained the functions requested by item (1).
	\\
	Concerning the following item, since the quadratic form is invariant by rotation it is sufficient to make the proof only in one particular set of type (2) 
	so we fix $P=(1,0)$ and $\Sigma = \{ (x,y) : x<0\}$.
	 Next we take a cut-off function $\xi\in C^1(-\infty,0]$ so that
\[ 0\le \xi\le 1 , \quad \xi (t)= \begin{cases} 1 & \text{ as } t \le - 2\delta , \\ 0 & \text{ as } -\delta \le t \le 0 ,\end{cases} \quad -2/\delta \le \xi'(t) \le 0\ \text{ as } -2\delta \le t \le -\delta  \]
and define (in standard coordinates)
\begin{align*}
\psi_{j}(x, y) & =\phi_{j}(x,y)\,  \xi(x) \quad & \text{ in } B_R \cap\{(x,y) : x<0\} .
\end{align*}
 It is clear that $\psi_{j}$  is zero on $\partial B_R$ and when $-\delta \le x $, and even w.r.t.~y. Besides $Q^{\a}_{\Sigma}(\psi_{j})\to Q^{\a}_{\Sigma}(\phi_j) = \frac{1}{2}Q^{\a}_{\R^2}(\phi_{j}) < 0$ as $\delta\to 0$, concluding the proof of item (2). Indeed it is clear that $\int_{\Sigma}  |x|^\a\left(\phi_{j}\xi\right)^2 dx \to \int_{\Sigma} |x|^\a\phi_{j}^2 dx$ as $\delta \to 0$, moreover 
 since $\phi_{j}=0 $ at $x=0$, using its  $C^1$ regularity and  the properties of $\xi$ gives
\begin{align*}
\left|\int_{\Sigma} \left(|\nabla \left(\phi_{j}\xi\right)|^2 - |\nabla \phi_j|^2 \right) dx \right| & \le  \int_{\Sigma\cap B_R} |\nabla \phi_{j}|^2 \left(1-\xi^2\right) dx \\
 + \int_{\Sigma\cap B_R} \phi_{j}^2|\xi'|^2 dx+ 2 \int_{\Sigma\cap B_R} |\phi_{j}| \,|\nabla\phi_{j}| \, |\xi| \, |\xi'|\,  dx  & \le C \, {\mathrm{meas}}\left(B_R\cap\{- \delta\le x \le 0\}\right) .
\end{align*}
	\end{proof}
\begin{remark}\label{rem:a=0-trasl}
	For $\a=0$ the quadratic form in $Q^{0}_{\Sigma}$ is invariant also for translation, so Lemma \ref{morse-limite-infinito} continue to hold also for shifted sets of type $\Sigma+ P_0$, for every $P_0\in\R^2$.
\end{remark}

We are now in the position to prove the $L^{\infty}$ estimate.

\begin{proposition}\label{prop:10.4}
	Let $p_k\to 1$ and $u_k\in H^1_{0,n}$ a sequence of solutions of \eqref{H} with $p=p_k$.
	If $m_n(u_k)\le m <\infty$, then there exists a 	constant $C$ such that $\|u_k\|_{\infty}^{p_k-1}\le C$ for large $k$.
\end{proposition}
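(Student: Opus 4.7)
I argue by contradiction via a blow-up analysis designed to force the $n$-Morse index of $u_k$ to exceed $m$. Assume, along a subsequence, that $M_k:=\|u_k\|_\infty$ satisfies $M_k^{p_k-1}\to+\infty$, and pick $x_k\in\overline{S_n}$ with $|u_k(x_k)|=M_k$, which is allowed by the $n$-invariance. Two scalings will enter the argument: the \emph{global} one, $\sigma_k:=M_k^{(p_k-1)/(2+\a)}\to+\infty$, suited to concentration at the origin, and the \emph{local} one, $\rho_k:=|x_k|^{\a/2}M_k^{(p_k-1)/2}$, suited to concentration away from it. I split according to whether $y_k:=\sigma_k x_k$ stays bounded.

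\textbf{Passing to the limit.} If $|y_k|$ is bounded I set $V_k(y):=M_k^{-1}u_k(y/\sigma_k)$, which satisfies $-\Delta V_k=|y|^{\a}|V_k|^{p_k-1}V_k$ on $B_{\sigma_k}$ with $|V_k|\le 1=|V_k(y_k)|$. Standard interior elliptic regularity produces, up to subsequence, a locally uniform limit $V$ on $\R^2$ which is $n$-invariant, solves $-\Delta V=|y|^{\a}V$, and satisfies $|V(y_0)|=1$ for $y_0=\lim y_k$. If instead $|y_k|\to+\infty$ I recenter by $W_k(z):=M_k^{-1}u_k(x_k+z/\rho_k)$; the identity $|x_k|\rho_k=|y_k|^{(2+\a)/2}\to+\infty$ forces $|x_k+z/\rho_k|^{\a}/|x_k|^{\a}\to 1$ uniformly on compacts, so $W_k\to W$ locally uniformly with $-\Delta W=W$ on a limit domain $\Sigma$ that, after exploiting the $n$-invariance of $u_k$ to unfold across $\Gamma_2\cup\Gamma_3$, is either the full plane $\R^2$ or a half-plane (the latter when $x_k$ approaches $\Gamma_1$).

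\textbf{Contradiction from the Morse index.} Lemma \ref{morse-limite-infinito} now provides, for any prescribed integer, that many linearly independent compactly supported test functions $\phi_1,\dots,\phi_{m+1}$ on which $Q^{\a}_\Sigma$ is strictly negative: item (1) with weight $\a$ in the first regime (yielding $n$-invariant $\phi_j$), and item (2) with weight $0$ together with Remark \ref{rem:a=0-trasl} in the second (yielding $\phi_j$ that are symmetric across the normal and vanish near $\partial\Sigma$). The pullbacks $\psi_{k,j}(x):=\phi_j(\sigma_k x)$, respectively $\psi_{k,j}(x):=\phi_j(\rho_k(x-x_k))$, are supported well inside $B$ for large $k$ and restrict to admissible elements of $\mathcal C_n$; a direct change of variables yields
\[
\mathcal Q_{u_k}(\psi_{k,j})=\int|\nabla\phi_j|^2\,dy-p_k\int\omega_k(y)\,|V_k(y)|^{p_k-1}\phi_j(y)^2\,dy\longrightarrow Q^{\a}_\Sigma(\phi_j)<0,
\]
where $\omega_k\to|y|^{\a}$ in the first regime and $\omega_k\to 1$ in the second, uniformly on $\operatorname{supp}\phi_j$, and $|V_k|^{p_k-1}\to 1$ on the same set. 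This exhibits an $(m+1)$-dimensional subspace of $\mathcal C_n$ on which $\mathcal Q_{u_k}$ is negative definite, contradicting $m_n(u_k)\le m$ via Lemma \ref{n-mi-unod-spicchio}.

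\textbf{Main obstacle.} The delicate points are, first, ruling out that the blow-up limit domain becomes a proper wedge (not covered by Lemma \ref{morse-limite-infinito}): this is exactly what the $n$-invariance of $u_k$ buys by allowing reflection across the Neumann rays $\Gamma_2\cup\Gamma_3$, so that a point of these rays behaves as an interior point of a larger symmetric problem. Second, transferring the strict negativity of $Q^{\a}_\Sigma(\phi_j)$ to $\mathcal Q_{u_k}(\psi_{k,j})$ requires concentrating the supports of the $\phi_j$ in a region where the blow-up profile $V$ or $W$ stays bounded below, so that $|V_k|^{p_k-1}\to 1$ uniformly there; this is made possible by the normalization $|V(y_0)|=1$ (respectively $|W(0)|=1$) together with continuity and by the freedom in choosing the radius $R$ in Lemma \ref{morse-limite-infinito}.
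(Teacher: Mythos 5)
Your overall strategy matches the paper's: assume $\|u_k\|_\infty^{p_k-1}\to\infty$, blow up around the extremal point, pass to a limit eigenvalue equation on $\R^2$ or a half-plane, pull back $m+1$ test functions from Lemma~\ref{morse-limite-infinito} into $\mathcal C_n$, and contradict $m_n(u_k)\le m$ via Lemma~\ref{n-mi-unod-spicchio}. Your dichotomy between $|y_k|$ bounded or not is a clean reformulation of the paper's distinction between Case~5.2 and the rest, and the identification of the two scaling parameters (weighted limit at the origin, unweighted limit elsewhere) is correct.

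However, there are two genuine gaps. The first concerns how you justify $\int p_k\,\omega_k\,|V_k|^{p_k-1}\phi_j^2\to\int\omega_\infty\phi_j^2$. You assert that one can concentrate the supports of the $\phi_j$ in a region where the blow-up limit $V$ (or $W$) is bounded below, so that $|V_k|^{p_k-1}\to 1$ uniformly there, invoking ``the freedom in choosing the radius $R$''. This cannot work: the construction of Lemma~\ref{morse-limite-infinito} requires $R$ \emph{large} in order that $Q^\a_\Sigma(\phi_j)<0$ (shrinking the support makes the gradient term dominate and renders $Q^\a_\Sigma$ positive), and the resulting $\phi_j$ themselves change sign, so their supports necessarily meet the nodal set of the limit. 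The paper instead uses the fact that the limit $V$ solves a linear elliptic equation and is nontrivial, so its zero set has Lebesgue measure zero (reference \cite{CF}); combined with the pointwise bound $|V_k|^{p_k-1}\le 1$, dominated convergence gives exactly the weak convergence \eqref{civoleva} needed. Without this, your passage to the limit in $\mathcal Q_{u_k}(\psi_{k,j})$ does not go through.

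The second gap concerns the recentering. You always center the local scaling at the extremal point $x_k=P_k$, and claim that ``unfolding across $\Gamma_2\cup\Gamma_3$'' yields $\R^2$ or a half-plane. But if $P_k=(x_k,y_k)$ lies off the symmetry axis and approaches $\Gamma_2$ (or a corner), the reflection axis in the rescaled coordinates is $y=-y_k\rho_k$, which moves with $k$, and the pulled-back test functions must vanish in normal derivative on $\Gamma_2$ --- a condition that is not guaranteed for functions centered away from the axis. The paper resolves this by the modified change of variables \eqref{change2}, which recenters in the $x$-direction only (at $(x_k,0)$, not at $P_k$), so the reflected problem is genuinely invariant for every $k$ and the test functions from item~(2) of Lemma~\ref{morse-limite-infinito}, being even in $y$, automatically satisfy the Neumann condition on $\Gamma_2$. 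Without this fix, Cases~3.b, 4.b, 4.c and part of Case~5.1 (in the paper's numbering) are not covered. These two points are not cosmetic: without them, the contradiction with Lemma~\ref{n-mi-unod-spicchio} does not follow.
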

\begin{proof}
	We argue by contradiction and take that $\|u_k\|_{\infty}^{p_k-1}\to\infty$ along a subsequence, that we still denote by  $p_k$.
By the symmetry of $u_k$ there is $P_k\in S_n$  where $u_k$ achieves its extremal value, that we can take to be a maximum w.l.o.g.
	Up to another sequence $P_k\to \bar P \in \bar S_n$, and we argue differently according to the location of $\bar P$.
	\\
	Before going on, let us introduce some notations and make some general considerations. We write $X=(x,y)$ for a generic point in $\R^2$, $P_k=(x_k,y_k)$, $\bar P = (\bar x, \bar y)$.
	Whenever $\bar P\neq 0$ we take as  scaling parameter
	\begin{equation}\label{Mk} 
	 M_k = \|u_k\|_{\infty}^{\frac{p_k-1}{2}} |\bar P|^{\frac{\a}{2}},
	 \end{equation}
	 and  introduce the change of variables 
	 \begin{equation}\label{change}
	  X' = P_k + \frac{X}{M_k},  \quad 
	 \widetilde u_k(X) = \frac{ u_k (X')}{\|u_k\|_{\infty}}  \ \text{ as } X \in \Sigma_k= \{ X\in \R^2  : \, X' \in S_n\}.
	 \end{equation}
	The regular part  of the boundary of $\Sigma_k$ is made up by the curves 
	\begin{align} \label{Gamma1}
	\Gamma_{1,k} = \big\{ X\!=\!(x,y) : & \, \left|X/M_k + P_k\right|=1 , \ 0< \frac{y+ y_k \, M_k}{x+ x_k \, M_k} < \tan \frac{\pi}{n} \big\} \\
	\label{Gamma2}
	\Gamma_{2,k} = \big\{ X\!=\!(x,y) : & \  -M_kx_k < x < M_k(1-x_k) , \ y= -M_k y_k \big\} \\
		\label{Gamma3} 
	\Gamma_{3,k} = \big\{ X\!=\!(x,y) : & \ \frac{y+ y_k \, M_k}{x+ x_k \, M_k} = \tan \frac{\pi}{n}, \ -M_kx_k < x < M_k \left(\cos \frac{\pi}{n} -x_k\right) , \\ \nonumber
	& \  -M_ky_k < y < M_k \left(\sin \frac{\pi}{n} -y_k\right)\big\} ,
	\intertext{ if $n\neq 2$, otherwise } \nonumber
	\Gamma_{3,k} = \big\{ X\!=\!(x,y) : & \ x=-M_k x_k , \ -M_ky_k < y < M_k \left(1  -y_k\right)\big\} .
	\end{align}
	 Next $\wtu_k$  solves
	\begin{align}\label{eq:wtu}
	\begin{cases}
-\Delta \widetilde u_k =  \rho_k |\wtu_k|^{p_k-1} \wtu_k  & \text{ in } \Sigma_k , \\
\wtu_k = 0 & \text{ on } \Gamma_{1,k} , \\
\partial_{\nu} \wtu_k =0 & \text{ on } \Gamma_{2,k}\cup \Gamma_{3,k} ,
	\end{cases}
	\end{align}
for 
\begin{equation}\label{rho} 
\rho_k(X)= \frac{\left|X'\right|^{\a}}{|\bar P|^\a} = \frac{\left|P_k + X/M_k\right|^{\a}}{|\bar P|^\a} .
\end{equation}
Notice that $M_k\to \infty$ and $\rho_k\to 1$ locally uniformly unless $\bar P=O$.
\\
Further $\wtu_k(O) = 1 = \|\wtu_k\|_{\infty}$ for every $k$ and by standard elliptic estimates if $\Sigma$ is any open subset of $\R^2$ such that $\Sigma\subset \Sigma_k$ for large $k$, then $\wtu_k$ converges weakly in $H^1_{\loc}(\Sigma)$ and  in $C_{\loc}(\Sigma)$ to a function $\wtu$ which solves
\begin{equation}\label{eq:limite-tutto} 
-\Delta \widetilde u =  \wtu \quad \text{ in } \Sigma 
\end{equation}
in weak sense (and therefore also in classical sense).
	Indeed for every $\varphi\in C^{\infty}_0(\Sigma)$, taken $k$ so large that the support of $\varphi$ in contained in $\Sigma_k$, we have
	\begin{align*}
	0 & = \int_{\Sigma} \left( \nabla \wtu_k \nabla\varphi - \rho_k |\wtu_k|^{p_k-1} \wtu_k \varphi \right) dX \\
	&	= \int_{\Sigma} \left( \nabla \wtu_k \nabla\varphi - \wtu_k \varphi \right) dX +
	\int_{\Sigma}\left(\rho_k -1\right)|\wtu_k|^{p_k-1} \wtu_k \, \varphi dX + 	\int_{\R^2}\left(|\wtu_k|^{p_k-1} -1\right)\wtu_k \, \varphi dX
	\end{align*}
	where the first integral goes to $\int_{\Sigma} \left( \nabla \wtu \nabla\varphi - \wtu \varphi \right) dX$ by the weak convergence of $\wtu_k$, the second one vanishes by the locally uniform convergence of $\rho$  and the boundedness of $\wtu_k$, and the third one vanishes too as it can be estimated by
	\begin{align*}
	\left|\int_{\Sigma}\left(|\wtu_k|^{p_k-1} -1\right)\wtu_k \, \varphi dX \right|\underset{\eqref{per-dopo}}{=}	(p_k-1)\int_{\Sigma} \int_0^1|\wtu_k|^{t(p_k-1)} dt \, |\wtu_k| \,\log|\wtu_k| \,  | \varphi| dX \\ \le c (p_k-1)\int_{\Sigma}  | \varphi| dX .
	\end{align*}	
If in addition $\Sigma$ can be taken such  that $O\in \Sigma$, then $\wtu$ is nontrivial because clearly $\wtu(O)=1$.
In that case the zero-set of $\wtu$ is made up by regular curves that may intersect only at some isolated points, see, for instance, \cite{CF}. Therefore 
\begin{equation}\label{civoleva}
\int_\Sigma p_k \rho_k |\wtu_k|^{p_k-1} \varphi dX \to \int_\Sigma  \varphi dX 
\end{equation}
for every function $\varphi\in C_0(\Sigma)$ .

	\
	
	{\it Case 1: $\bar P\in S_n$.}
	In this case  looking at \eqref{Gamma1}-\eqref{Gamma3} one sees that $\Sigma_k$ invades $\R^2$, so that $\wtu$ is a nontrivial solution of \eqref{eq:limite-tutto} with $\Sigma=\R^2$.
On the other hand by Lemma \ref{morse-limite-infinito}, item (1)  there exist at least $m+1$ linearly independent $n$-invariant functions $\phi_{j}$ with compact support such that $Q^0_{\R^2}(\phi_{j})<0$. 
So \eqref{civoleva} implies
\begin{align*}
\int_{\Sigma_k} \left(|\nabla \phi_{j}|^2 - p_k\rho_k|\wtu_k|^{p_k-1} \phi_{j}^2\right) dX < 0 .
\end{align*}
Next we come back according to the change of variables \eqref{change} and define the functions $w_j(X')= \phi_{j}(X)$ as $X'\in S_n$. They belong to the space $\mathcal C_n$ for large $k$ since their support is contained in a ball of radius $R/M_k$ centered at $P_k$ with $P_k\to \bar P\in S_n$.  Moreover they are linearly independent and satisfy
\begin{align*}
\int_{S_n} \left(|\nabla w_j(X')|^2 - p_k |X'|^{\a}|U_{p_k, n}(X')|^{p_k-1} w_j^2(X')\right) dX' = \\
\int_{S_n} \left( |\nabla \left(\phi_{j}(M_k(X'\!-\!P_k))\right)|^2 - p_k M_k^2 \frac{|X'|^{\a}}{|\bar P|^{\a}}\left|\widetilde u_k\left(M_k(X '\!-\!P_k)\right) \right|^{p_k-1} \phi_j^2(M_k(X'\!-\!P_k)) \right)  dX'
\\ 
= \int_{\Sigma_k} \left(|\nabla \phi_{j}(X)|^2 - p_k \rho_k(X) \left|\widetilde u_k(X)\right|^{p_k-1} \phi_{j}^2(X)\right) dX  < 0,
\end{align*}
which contradicts Lemma \ref{n-mi-unod-spicchio}.

\

{\it Case 2: $\bar P\in \Gamma_1$.} Different situations present depending if ${\mathrm{dist}}(P_k,\Gamma_1)\, M_k \to \infty$ or ${\mathrm{dist}}(P_k,\Gamma_1)\, M_k \to s>0$.
Indeed ${\mathrm{dist}}(P_k,\Gamma_1)\, M_k $ cannot vanish because of the elliptic regularity up to the boundary, see \cite[Case 2 in the proof of Theorem 1.1]{GS81a}. 

\medskip
{\it Case 2.a.} If  ${\mathrm{dist}}(P_k,\Gamma_1)\, M_k \to \infty$, looking at \eqref{Gamma1}-\eqref{Gamma3} one sees that  $\Sigma_k$ invades $\R^2$ and the conclusion follows as in Case 1.

\medskip
{\it Case 2.b.} If  ${\mathrm{dist}}(P_k,\Gamma_1)\, M_k \to s>0$, then for every $X\in \R^2$ we have 
\[{M_k}\left(\left|\frac{X}{M_k} + P_k\right|^2-1\right)=  \frac{|X|^2}{M_k} + 2 X\cdot P_k - (1+|P_k|) M_k \,\mathrm{dist}(P_k, \Gamma_1)
\to 2 (X-s\bar P)\cdot \bar P .\]
So recalling \eqref{Gamma1} one sees that 
 the curve $\Gamma_{1,k}$ goes to the straight line $\{ X \, : \, (X-s\bar P)\cdot \bar P =0 \}$, and $\Sigma_k$ invades the half-plane $\Sigma =\{ X\, : \, (X-s\bar P)\cdot \bar P <0 \}$.
 Because $s>0$, then $O\in \Sigma$ and $\wtu$ is a nontrivial solution to \eqref{eq:limite-tutto}.
 
  Now by Lemma \ref{morse-limite-infinito}, item (2) and Remark \ref{rem:a=0-trasl} there are $m+1$ linearly independent functions $\phi_{j}$ which are zero outside $B_R\cap \{X: (X-s\bar P)\cdot \bar P \le -\delta \}$ such that $Q^0_{\Sigma}(\phi_j)<0$. In particular, for $k$ sufficiently large, their support is contained in $\Sigma_{k}$ and by the weak convergence of $p_k \rho_k |\wtu_k|^{p_k-1}$  we infer that
\[
\int_{\Sigma_k} \left(|\nabla \phi_{j}|^2 - p_k\rho_k|\wtu_k|^{p_k-1} \phi_{j}^2\right) dX <0 .
\]
Eventually the functions $w_j(X')= \phi_{j}(X)$ as $X'\in S_n$ belong to the space $\mathcal C_n$ for large $k$, because their supports do not touch the boundary of $S_n$. Moreover they  are linearly independent and by the change of variables \eqref{change} one sees that 
\begin{align*}
\int_{S_n} \left(|\nabla w_j|^2 - p_k |X'|^{\a}|u_k|^{p_k-1}w_j^2\right) dX' 
=  \int_{\Sigma_k} \left(|\nabla \phi_{j}|^2 - p_k\rho_k|\wtu_k|^{p_k-1} \phi_{j}^2\right) dX <0 ,
\end{align*}
which contradicts Lemma \ref{n-mi-unod-spicchio}.

\

{\it Case 3: $\bar P\in \Gamma_2\cup\Gamma_3$.}
We only consider the case $\bar P\in \Gamma_2$, as the other one can be handled similarly. 
Now $\bar P=(\bar x,0)$ for some $0<\bar x<1$ and $\mathrm{dist}(P_k, \Gamma_2)= y_k$ for large $k$.
Again the limit set for $\Sigma_k$ changes according if either $y_k M_k \to \infty$ or to some $ t\ge  0$.

\medskip
{\it Case 3.a.} If $y_k  M_k \to \infty$,  $\Sigma_k$ invades $\R^2$ and the conclusion follows as in Case 1.

\medskip
{\it Case 3.b.} If $y_k  M_k \to t\ge 0$, looking at \eqref{Gamma2} one sees that the segment $\Gamma_{2,k}$ goes to the straight line $y=-t$, and $\Sigma_k$ invades the half-plane $\{(x,y) : y>-t\}$. 
So instead of  $S_n$ we focus into a sector of amplitude $2\pi/n$, namely 
\[ S^{\sharp}_n = \{ X=(x,y) \in \R^2 : (x,y)  \text{ or } (x, -y) \in S_n \} ,\] 
and we slightly modify the scaling by taking 
\begin{equation}\label{change2} \begin{split} 
X'=(x', y') \ \text{ given by } x'= x_k + \frac{x}{M_k} , \  y'=  \frac{y}{M_k} , \ \text{ as } X=(x,y) \\
\wtu_k(X)= \frac{u_k(X')  }{\|u_k\|_{\infty}} \qquad \text{ for } X \in \Sigma^{\sharp}_k= \{ X : X' \in S^{\sharp}_n\}
\end{split}\end{equation}
instead of \eqref{change}.
Minor changes to the previous arguments yield that $\Sigma^{\sharp}_k$ covers $\R^2$ and $\wtu_k$ converges in $C_{\loc}(\R^2)$ to a solution of \eqref{eq:limite-tutto} in $\R^2$. The locally uniform convergence ensures that $\wtu(0,t)=1$ (because $\wtu_k(0, y_kM_k)=1$ with $y_kM_k\to t$), so that $\wtu$ is not identically zero and then also \eqref{civoleva} holds true.
Thank to this  the same arguments used in Case 1 give that
\begin{align*}
\int_{S^{\sharp}_n} \left(|\nabla w_j|^2 - p_k |X'|^{\a}|u_k|^{p_k-1} w_j^2\right) dX'  < 0,
\end{align*}
where $w_j (X')= \phi_{j}(X)$, and $\phi_j$ are the functions produced in Lemma \ref{morse-limite-infinito}, item (1).
In particular, due to the modified change of variables \eqref{change2}, both $u_k$ and $w_j$ are even w.r.t. the $y' $ variable, therefore also 
\begin{align*}
\int_{S_n} \left(|\nabla w_j|^2 - p_k |X'|^{\a}|u_k|^{p_k-1} w_j^2\right) dX'   < 0,
\end{align*}
and $\partial_y w_j(x',0)=0$, i.e.  $\partial_{\nu}w_j=0$ on $\Gamma_2$. 
So the functions $w_j$, restricted to $S_n$, belong to $\mathcal C_n$ (because their support does not touch $\Gamma_1$ or $\Gamma_3$ if $k$ is large enough) and provide a contradiction with Lemma  \ref{n-mi-unod-spicchio}.

\

{\it Case 4: $\bar P=A$ or $B$.}
We take $\bar P=B$, as the case $\bar P=A$ is similar. Now 
\[{\mathrm{dist}}(P_k, \partial S_n)= \min\left\{ {\mathrm{dist}}(P_k, \partial \Gamma_{1}) \, , \, {\mathrm{dist}}(P_k, \partial \Gamma_{2}) \right\}
.\]
If ${\mathrm{dist}}(P_k, \partial S_n)\, M_k \to \infty$ the conclusion follows as in Case 1. Otherwise we have to distinguish between different occurrencies:
\begin{enumerate}[\it{Case 4.}a:]
	\item ${\mathrm{dist}}(P_k, \partial \Gamma_{1})\, M_k\to s > 0$ and ${\mathrm{dist}}(P_k, \partial \Gamma_{2}) \, M_k \to \infty$,
\item    ${\mathrm{dist}}(P_k, \partial \Gamma_{1})\, M_k \to \infty $ and ${\mathrm{dist}}(P_k, \partial \Gamma_{2}) \, M_k \to t\ge 0$,
	\item ${\mathrm{dist}}(P_k, \partial \Gamma_{1}) \, M_k \to s > 0$ and ${\mathrm{dist}}(P_k, \partial \Gamma_{2}) \, M_k \to t \ge 0$.
\end{enumerate}
Indeed the occurrence ${\mathrm{dist}}(P_k, \partial \Gamma_{1})\, M_k \to  0$ can not happen by the considerations in \cite{GS81a}.
Case 4.a and 4.b can be ruled out as we have done for  Cases 2.b and  3.b, respectively.

As for Case 4.c, using the change of variables \eqref{change2}  the sets $\Sigma^{\sharp}_k$ cover the half-plane $\Sigma=\{(x,y) : x < s \}$ and the functions $\wtu_k$ converge in $C^1_{\loc}(\Sigma)$ to a solution of \eqref{eq:limite-tutto} on $\Sigma$, which is nontrivial since $u_k(0, y_kM_k) =1 $ with $(0, y_kM_k)\to (0, t)\in \Sigma$ as $s>0$.
Next the functions $\phi_j$ produced in Lemma \ref{morse-limite-infinito}, item (2) have support compactly contained in  $\Sigma^{\sharp}_k$ and so 
\[ \int_{\Sigma^{\sharp}_k}  \left(|\nabla \phi_j|^2 - p_k \rho_k |\wtu_k|^{p_k-1} \phi_j^2\right) dX  < 0 \]
by the weak convergence \eqref{civoleva}.
Taking $w_j (X')= \phi_{j}(X)$, one can easily see that 
\begin{align*}
\int_{S^{\sharp}_n} \left(|\nabla w_j|^2 - p_k |X'|^{\a}|u_k|^{p_k-1} w_j^2\right) dX'  < 0 .
\end{align*}
But, due to the modified change of variables \eqref{change2}, $w_j$ are even w.r.t. the $y' $ variable, and certainly the same holds for $u_k$.
Therefore  also 
\begin{align*}
\int_{S_n} \left(|\nabla w_j|^2 - p_k |X'|^{\a}|u_k|^{p_k-1} w_j^2\right) dX'  < 0 .
\end{align*}
and by symmetry on $\Gamma_2 \subset \{(x', y') : y'=0\}$ we have $\partial_{\nu} w_j = \partial_y w_j(y', 0)= 0$.
Eventually  the restriction of $w_j$ to $\{(x',y') : y'>0\}$ belongs to ${\mathcal C}_n$, because its support does not touch either $\Gamma_1$ or $\Gamma_3$ if $k$ is taken sufficiently large, and this concludes this part of the proof. 

\medskip

{\it Case 5: $\bar P=O$.}
Here we need a different scaling parameter because $M_k$ given in \eqref{Mk} is constantly  $0$.

{\it Case 5.1: $\bar P=O$ and $|P_k|^{2+\a} \|u_k\|_{\infty}^{p_k-1} \to \infty$.} 
We take 
\[ M_k = |P_k|^{\frac{\a}{2}} \|u_k\|_{\infty}^{\frac{p_k-1}{2}} ,
\] 
so that $M_k = \frac{ |P_k|^{\frac{2+\a}{2}} \|u_k\|_{\infty}^{\frac{p_k-1}{2}}}{|P_k|}\to \infty $.
Next we use the same change of variables \eqref{change}, but with a different value for $M_k$.
In that way $\wtu_k$ solves a problem of type \eqref{eq:wtu} for 
\begin{equation}\label{rho5}
\rho_k(X)= \left(\frac{|X'|}{M_k |P_k|}\right)^{\a} = \left(\frac{| M_k P_k + X|}{M_k |P_k|}\right)^{\a},
\end{equation} 
which converges to $1$ locally uniformly because also $M_k |P_k| \to \infty$.
\\
Concerning the limit set for $\Sigma_k$, it changes depending if $\mathrm{dist}(P_k , \partial \Sigma_k) \, M_k$ is bounded or not.
If $\mathrm{dist}(P_k , \partial \Sigma_k)\, M_k \to \infty$, then $\Sigma_k$ invades $\R^2$ and  we can conclude as in Case 1.

Otherwise if $\mathrm{dist}(P_k , \partial \Sigma_k) \, M_k  \to t \ge 0$, we remark that 
\[ \mathrm{dist}(P_k , \partial \Sigma_k)= \min\left\{ \mathrm{dist}(P_k , \Gamma_{2,k}) , \mathrm{dist}(P_k , \partial \Gamma_{3,k})\right\} .\]
To fix ideas we take that  $\mathrm{dist}(P_k , \partial \Sigma_k) = \mathrm{dist}(P_k , \Gamma_{2,k}) =y_k$ along a subsequence (the opposite case can be dealt in similarly).
In the present situation $y_k M_k$ is bounded and $|P_k| M_k \to \infty$, therefore $x_k M_k\to \infty$.
Consequently $\Sigma_k$ goes to the half-space $\{y> -t\}$, and one can reason as in  Case 4.b.

\medskip 

{\it Case 5.2: $\bar P=O$ and $|P_k|^{2+\a} \|u_k\|_{\infty}^{p_k-1}$ is bounded.}  
\\
We chose as a scaling parameter
\[\mu_k= \|u_k\|_{\infty}^{\frac{p_k-1}{2+\a}},\]
and define 
\begin{align}\label{change-ultimo}
\widehat u_k(X) & = \frac{1}{\|u_k\|_{\infty}} u_k (X/\mu_k) \quad \text{ in } B_k = \{X : |X|<\mu_k\} ,
\end{align}
which solves
\[\begin{cases}
-\Delta \widehat u_k = |X|^{\a} |\widehat u_k|^{p_k-1} \widehat u_k & \text{ in } B_k , \\
\widehat u_k=0 & \text{ on } \partial B_k .
\end{cases}\]
Now $\widehat u_k(\mu_kP_k)= 1 = \|\widehat u_k\|_{\infty}$ and  $B_k$ invades $\R^2$, therefore one can see that also in this case  $\widehat u_k$ converges locally uniformly to a  solution of 
\[ - \Delta \widehat u  = |x|^{\a} \widehat u \quad x\in \R^2.\]
Let us check that the function $\widehat u$ is nontrivial. Since $\mu_k\, P_k$ is bounded we can assume that it converges to some point $Q_0$, and 
by the locally uniform convergence $\widehat u(Q_0)=1$.
\\
Eventually, take the $m+1$ linearly independent functions $\phi_j$ produced in Lemma \ref{morse-limite-infinito}, step (1) with compact support such that $Q^{\a}_{\R^2}(\phi_j)<0$.
The convergence of $\widehat u_k$ yields that also
\[ \int_{B_k}\left( |\nabla \phi_j|^2 - p_k |X|^\a |w_k|^{p_k-1} \phi_j^2\right) dX < 0 .\]
Eventually defining $\psi_j(X)= \phi_j(\mu_k\, X)$ for $X\in B$, we see that  $\psi_j\in H^1_{0,n}$ (because the change of variable \eqref{change-ultimo} does not break the symmetries) and 
\begin{align*}
\int_{B} \left(|\nabla \psi_j|^2 - p_k |X|^{\a}|u_k|^{p_k-1} \psi_j^2\right) dX =\int_{B_k} \left(|\nabla \phi_{j}|^2 - p_k |X|^\a |w_k|^{p_k-1} \phi_j^2\right) dX <0 ,
\end{align*}
which contradicts the fact that the $n$-Morse index of $u_k$ is at most $m$.
\end{proof}

Thank to Lemma \ref{n-mi-unod} and Proposition \ref{prop:10.4}, we can apply  Lemma \ref{lem:conv-eigenv} to any sequence $U_{p_k,n}$ of least energy $n$-invariant nodal solutions with $p_k\to 1$ and deduce that  \eqref{limite} and  \eqref{weak-nod} hold for some eigenvalue $\mu_{j,i}$ of \eqref{prima-autof-weight} and related  eigenfunction $\omega_{j,i}$ (normalized so that $\|{\omega}_{j,i}\|_{\infty}=1$). Of course the index $(j,i)$ has to be selected in such a way that $ {\omega}_{j,i}\in H^1_{0,n}$.
So from the computation preformed in  Lemma \ref{lem:autov-peso} we deduce that 
\begin{align} \label{autov-peso-N=2}
\mu_{j,i} & = \left(\frac{2+\alpha}{2} \, z_{i}\left(\frac{2j}{2+\alpha}\right)\right)^2 ,\\
 \label{autof-peso-N=2}
{\omega}_{j,i}(r,\theta) & =  {\pm}\frac{1}{\|\jcal_{\frac{2j}{2+\alpha}}\|_{\infty}} \jcal_{\frac{2j}{2+\alpha}}\left(z_{i}\left(\frac{2j}{2+\alpha}\right)r^{\frac{2+\alpha}{2}}\right) \cos(j\theta), 
\end{align}
where the index $j$ can be $0$ or a multiple of $n$.
Here we have also used that each Bessel function attains is global extremum in its first nodal interval and that in dimension $N=2$ the eigenfunctions of the Laplace-Beltrami operator are of type $A\cos(j\theta)+B\sin (j\theta)$, so that they belong to $H^1_{0,n}$ only when $j=0$ or $j$ is a multiple of $n$ and $B$ is zero.
\\
Next we see that the minimality of $U_{p,n}$ implies that $\mu_{j,i}$ must be the second eigenvalue of \eqref{prima-autof-weight} in the space $H^1_{0,n}$, and so we 
 single out the exact value of $j$ and $i$. This gives Theorem \ref{n-asympt}

\begin{proof}[Proof of Theorem \ref{n-asympt}]
With a little abuse of notation, we write $p\to 1$ meaning any sequence	$p_k\to 1$.
We have already pointed out that $\|U_{p,n}\|_{\infty}^{p-1}$ and $ \bar U_p:=U_{p,n}/\|U_{p, n}\|_{\infty}$ converge respectively to an eigenvalue $\mu_{j,i}$ and an eigenfuntion $ \omega_{j,i}$ of \eqref{prima-autof-weight} described by \eqref{autov-peso-N=2} and \eqref{autof-peso-N=2}.
	It remains to check that  the values of $j$ and $i$ in \eqref{limite} are $n$ and $1$ (or $0$ and $2$)  if $n<\frac{2+\a}{2}\beta $ (or else $n> \frac{2+\a}{2}\beta $).
	We divide the proof of this fact  in several steps.
	
	\
	
	{\it Step 1:} $\mu_{j,i} > \mu_{0,1}$.
	\\
	First  we observe that $\bar u_p$  cannot go to the first eigenfunction $\omega_{0,1}$.
	Otherwise  \eqref{limite} assures that  $p\|U_{p,n}\|_{\infty}^{p-1}\to \mu_{0,1}<\mu_{h,\ell}$ for every $(h,\ell)\neq (0,1)$, since the first eigenvalue is simple.
	Letting $\mu$ be the second eigenvalue of \eqref{prima-autof-weight} in $H^1_{0,n}$, for every $w\in H^1_{0,n}$, $w\perp \omega_{0,1}$ we have
	\begin{align*}
	\mathcal Q_{U_{p,n}}(w)& =\int _B |\nabla w|^2dx - \int _B p |x|^\a|U_{p,n}|^{p-1} w^2 dx  \ge 
	\mu \int _B |x|^{\a}w ^2dx- \int _B p |x|^\a|U_{p,n}|^{p-1} w^2 dx \\
	&\ge  \left(\mu - p\|U_{p,n}\|_{\infty}^{p-1}\right) \int _B |x|^{\a}|w|^2dx  \ge  \e \int _B |x|^{\a}|w|^2dx  
	\end{align*}
	for some $\e>0$ when $p$ is close to $1$. 
	Hence in this case the $n$-Morse index of $U_{p,n}$ would be  at most 1, contradicting Lemma \ref{n-mi-unod}.
	
	\
	
	{\it Step 2:} $\mu_{j,i}\le \mu_{n,1}$.
	\\
	By the minimality of $U_{p,n}$, and observing that  $\mathcal E_ p (v)= \frac{p-1}{2(p+1)}	\int_B |\nabla v |^2 dx $ for any function $v\in \mathcal N^{\nod}_{p,n}$, we have 
	\begin{align*}
	\int_B |\nabla \bar U_{p,n}|^2 dx = \frac{1}{\|U_{p,n}\|_{\infty}^2} \int_B |\nabla U_{p,n}|^2 dx \le \frac{1}{\|U_{p,n}\|_{\infty}^2} \int_B |\nabla v|^2 dx 
	\end{align*}
  for every $v\in \mathcal N^{\nod}_{p,n}$.	So \eqref{limite} implies that
	\begin{align}\label{goal}
	0< \int_B |\nabla  \omega_{j,i}|^2 dx \le \liminf\limits_{p\to 1}\frac{1}{\|U_{p,n}\|_{\infty}^2} \int_B |\nabla v_p|^2 dx 
	\end{align}
	for every sequence  $v_p$ in $\mathcal N^{\nod}_{p,n}$.
	
	Next we define 
	\begin{align*}
	v_p  : =  A_p  \omega_{n,1} 
	\end{align*}
	where $  \omega_{n,1}$ is defined according to \eqref{autof-peso-N=2}, 
	and check that we can chose the constant $A_p>0$ in such a way that $v_p\in {\mathcal N}^{\nod}_{p,n}$ for every $p$.
	\\
	The support of the   positive/negative parts of $v_p$  are $B^{\pm} = \bigcup\limits_{i=0}^{n-1} \Sigma^{\pm}_i$ for
	$\Sigma^+_i=\big\{(r,\theta) \, : \, 0\le r\le 1, \,  \frac{2i-1}{2n}\pi \le\theta \le \frac{2i+1}{2n}\pi \big\}$ and $\Sigma^-_i=\big\{(r,\theta) \, : \, 0\le r\le 1, \, \frac{2i+1}{2n} \pi\le \theta \le \frac{2i+3}{2}\pi \big\}$,	so that by periodicity
	\begin{align*}
	\int_B |x|^{\a}|v_p^+|^{p+1} dx & =  A_p^{p+1} 	\int_B |x|^{\a}| \omega_{n,1}^+|^{p+1} dx =
	n A_p^{p+1} \int_{\Sigma_0^+} |x|^{\a}| \omega_{n,1}|^{p+1} dx \intertext{and by simmetry}
	&=   n A_p^{p+1} \int_{\Sigma_0^-} |x|^{\a}| \omega_{n,1}|^{p+1} dx = A_p^{p+1} 	\int_B |x|^{\a}| \omega_{n,1}^-|^{p+1} dx =
	\int_B |x|^{\a}|v_p^-|^{p+1} dx .
	\end{align*}
	On the other hand $v_p$ is an eigenfunction for \eqref{prima-autof-weight}  related to $\mu_{n,1}$, hence 
	\begin{align*}
	\int_B |\nabla v_p^{\pm}|^2 dx & =  \int_B \nabla v_p \nabla v^{\pm} dx = 
	\mu_{n,1}\int_B |x|^{\a} v_p v_p^{\pm} dx = 	\mu_{n,1}\int_B |x|^{\a} |v_p^{\pm}|^2 dx 
	\end{align*}
	and by virtue of the periodicity and simmetry of $\cos(n\theta)$ we get
	\begin{align*}
	\int_B |\nabla v_p^+|^2 dx & = A_p^{2} \mu_{n,1} \int_B |x|^{\a} | \omega^{+}_{n,1}|^2 dx = \frac{1}{2} A_p^{2} \mu_{n,1} \int_B |x|^{\a} | \omega_{n,1}|^2 dx
	\\
	& = A_p^{2} \mu_{n,1} \int_B |x|^{\a} | \omega^{-}_{n,1}|^2 dx = 	\int_B |\nabla v_p^-|^2 dx 
	\end{align*}
	Summing up, $v_p\in \mathcal N_{n,\nod}$ provided that
	\begin{equation}\label {Ap}
	A_p= \left(\mu_{n,1} \frac{\int_B |x|^{\a} | \omega^{\pm}_{n,1}|^2 dx }{\int_B |x|^{\a} | \omega^{\pm}_{n,1}|^{p+1} dx} \right)^{\frac{1}{p-1}}
	= \left(\mu_{n,1} \frac{\int_B |x|^{\a} |  \omega_{n,1}|^2 dx }{\int_B |x|^{\a} |  \omega_{n,1}|^{p+1} dx} \right)^{\frac{1}{p-1}}
	\end{equation}
	Inserting $v_p$ into \eqref{goal} gives
	\begin{align*}
	0 & <  \liminf\limits_{p\to 1}\frac{1}{\|U_{p,n}\|_{\infty}^2} \int_B |\nabla v_p|^2 dx 
	= \liminf\limits_{p\to 1}\left(\frac{A_p}{\|U_{p,n}\|_{\infty}}\right)^2 \int_B |\nabla   \omega_{n,1}|^2 dx 
	\intertext{and because $   \omega_{n,1}$ is an eigenfunction for \eqref{prima-autof-weight} related to $\mu_{n,1}$} 
	& =\liminf\limits_{p\to 1}\left(\frac{A_p}{\|U_{p,n}\|_{\infty}} \right)^2 \mu_{n,1}\int_B |x|^{\alpha}|\omega_{n,1}|^2 dx 
	\intertext{and  \eqref{Ap} gives}
	& =\liminf\limits_{p\to 1}\left(\frac{\mu_{n,1}^{\frac{p+1}{2}}}{\|U_{p,n}\|_{\infty}^{p-1}} \frac{\left(\int_B |x|^{\a} |  \omega_{n,1}|^2 dx\right)^{\frac{p+1}{2}} }{\int_B |x|^{\a} |  \omega_{n,1}|^{p+1} dx }\right)^{\frac{2}{p-1}}  .
	\end{align*}
	Since $1/(p-1)\to\infty$, a necessary condition is
	\[1 \le \liminf\limits_{p\to 1}\frac{\mu_{n,1}^{\frac{p+1}{2}}}{\|U_{p,n}\|_{\infty}^{p-1}} \frac{\left(\int_B |x|^{\a} |  \omega_{n,1}|^2 dx\right)^{\frac{p+1}{2}} }{\int_B |x|^{\a} |  \omega_{n,1}|^{p+1} dx } 
	= \frac{\mu_{n,1}}{\mu_{j,i}} \] 
	by \eqref{limite}, which implies $\mu_{j,i}\le \mu_{n,1}$.
	
	\
	
	{\it Step 3:} $\mu_{j,i}\le \mu_{0,2}$.
	\\
	We follow the same line of {\it Step 2} and define
	\begin{align*}
	v_p(x)  : = \begin{cases} A^+_p   \omega_{0,2}(x) = A^+_p \jcal_0\left(z_2(0)\, |x|^{\frac{2+\a}{2}}\right) & \text{ if } |x|\le R:=\left(\frac{z_1(0)}{z_2(0)}\right)^{\frac{2}{2+\a}} \\
	A^-_p   \omega_{0,2}(x) = A^-_p \jcal_0\left(z_2(0)\, |x|^{\frac{2+\a}{2}}\right) & \text{ if } R < |x|\le 1, \end{cases}
	\end{align*}
	and choose  the constants $A^{\pm}_p>0$ in such a way that $v_p\in {\mathcal N}^{\nod}_{p,n}$ for every $p$.
	First notice that since $v_p(x)=0$ if and only if   $|x|=R$, then  $v_p\in H^1_0(B)$  and the support of its positive/negative parts are respectively $\Omega^+=B_R$ and $\Omega^-=B\setminus B_R$.
	Next
	\begin{align*}
	\int_B |x|^{\a}|v_p^{\pm}|^{p+1} dx & =  (A^{\pm}_p)^{p+1} 	\int_{\Omega^{\pm}} |x|^{\a}|  \omega_{0,2}|^{p+1} dx ,
		\end{align*}
	\begin{align*}
	\int_B |\nabla v_p^{\pm}|^2 dx & =  (A^{\pm}_p)^2\int_B |\nabla    \omega_{0,2}^{\pm}|^2 dx = (A^{\pm}_p)^2\int_B \nabla    \omega_{0,2} \nabla    \omega^{\pm}_{0,2} dx 
	\intertext{and since $  \omega_{0,2}$ is an eigenfunction for \eqref{prima-autof-weight} related to $\mu_{0,2}$,  using $(A^{\pm}_p)^2  \omega_{0,2}^{\pm}\in H^1_0(B)$ as a test function  gives}
	& = \mu_{0,2} 	(A^{\pm}_p)^2\int_B |x|^{\a}    \omega_{0,2}     \omega^{\pm}_{0,2} dx = \mu_{0,2} 	(A^{\pm}_p)^2\int_B |x|^{\a} |    \omega^{\pm}_{0,2}|^2 dx \\
	\end{align*}
	Summing up, $v_p\in \mathcal N_{n,\nod}$ provided that
	\begin{equation}\label {Ap.2}
	A^{\pm}_p= \left(\mu_{0,2} \frac{\int_B |x|^{\a} |  \omega^{\pm}_{0,2}|^2 dx }{\int_B |x|^{\a} |  \omega^{\pm}_{0,2}|^{p+1} dx} \right)^{\frac{1}{p-1}}
	\end{equation}
	Inserting $v_p$ into \eqref{goal} gives
	\begin{align*}
	0 & <  \liminf\limits_{p\to 1}\frac{1}{\|U_{p,n}\|_{\infty}^2} \int_B |\nabla v_p|^2 dx 
	= \liminf\limits_{p\to 1}\frac{1}{\|U_{p,n}\|_{\infty}^2} \left[ (A^+_p)^2\int_B |\nabla   \omega^+_{n,1}|^2 dx +  (A^-_p)^2 \int_B |\nabla   \omega^-_{n,1}|^2 dx \right]
	\intertext{and by the previous observations}
	& = \liminf\limits_{p\to 1}\frac{\mu_{0,2}}{\|U_{p,n}\|_{\infty}^2} \left[ (A^+_p)^2\int_B |x|^{\a}|  \omega^+_{n,1}|^2 dx +  (A^-_p)^2 \int_B |x|^{\a}|  \omega^-_{n,1}|^2 dx \right]
	\intertext{next using \eqref{Ap.2} gives}	
	&= \liminf\limits_{p\to 1} 	\left(\frac{\mu_{0,2}^{\frac{p+1}{2}}}{\|U_{p,n}\|_{\infty}^{p-1}} \right)^{\frac{2}{p-1}}	\left[ 
\left(\frac{\left(\int_B |x|^{\a} |  \omega_{2,0}^+|^2 dx\right)^{\frac{p+1}{2}} }{\int_B |x|^{\a} |  \omega_{2,0}^+|^{p+1} dx }\right)^{\frac{2}{p-1}} 
		+ \left(\frac{\left(\int_B |x|^{\a} |  \omega_{2,0}^-|^2 dx\right)^{\frac{p+1}{2}} }{\int_B |x|^{\a} |  \omega_{2,0}^-|^{p+1} dx }\right)^{\frac{2}{p-1}}
		\right]
	\end{align*}
	Since $1/(p-1)\to\infty$, a necessary condition is that or 
	\[1 \le \liminf\limits_{p\to 1}\frac{\mu_{0,2}^{\frac{p+1}{2}}}{\|U_{p,n}\|_{\infty}^{p-1}} \frac{\left(\int_B |x|^{\a} |  \omega_{0,2}^{\pm}|^2 dx\right)^{\frac{p+1}{2}} }{\int_B |x|^{\a} |  \omega_{0,2}^{\pm}|^{p+1} dx } 
= \frac{\mu_{0,2}}{\mu_{j,i}} ,\] 
so that also in this case $\mu_{j,i}\le \mu_{0,2}$.
	
\

{\it Step 4:} The second eigenvalue of \eqref{prima-autof-weight} in $H^1_{0,n}$ is simple whenever $n\neq \frac{2+\a}{2}\beta$, and precisely it is given by 
$\mu_{n,1}$ or $\mu_{0,2}$ depending if $n \lessgtr \frac{2+\a}{2}\beta$.
\\
Thanks to \eqref{autov-peso-N=2} it is equivalent to see that the following inequalities hold among the zeros of different Bessel functions:
\begin{align*}
z_1(0)< z_1\left(\frac{2n}{2+\a}\right)<z_1 \left(\frac{2hn}{2+\a}\right)  \  \mbox{ as $h\ge 2$}, \\
z_2(0) \gtrless   z_1\left(\frac{2n}{2+\a}\right) \ \mbox{ if $ n \lessgtr \frac{2+\a}{2}\beta $. } 
\end{align*}
They both are consequences of the fact that the map $\beta\mapsto z_1(\beta)$ is increasing. The first one is trivial, while the second one holds true because by definition of 
$\beta$ we have $z_2(0) = z_1(\beta) \gtrless z_1\left(\frac{2n}{2+\a}\right)$ according if $\beta \gtrless \frac{2n}{2+\a}$.

\

{\it Step 5: conclusion.} 
\\
By {\it Steps 1-3} we know that $\mu_{0,1}<\mu_{j,i}\le \min\{\mu_{n,1},\mu_{0,2}\}$. Next {\it Step 4} guarantees that there are not eigenvalues in the range $\big(\mu_{0,1}, \min\{\mu_{n,1},\mu_{0,2}\}\big)$, therefore $\mu_{j,i}= \min\{\mu_{n,1},\mu_{0,2}\}= \mu_{n,1}$ if $n<\frac{2+\a}{2}\beta $, or $\mu_{0,2}$ otherwise.
So, remembering that the second eigenvalue is simple unless $n=\frac{2+\a}{2}\beta$, we have proved that  \eqref{limite} holds for $(j,i)=(n,1)$ if $n<\frac{2+\a}{2}\beta$, or else for $(j,i)=(0,2)$ when $n> \frac{2+\a}{2}\beta$.

\end{proof}

\begin{remark}\label{3.6}
	In the particular case $n=\frac{2+\a}{2}\beta$, then $z_{1}\left(\frac{2n}{2+\alpha}\right)=z_1(\beta)=z_2(0)$ by definition of $\beta$. Hence  $\mu_{n,1}=\mu_{0,2}$ has  multiplicity two as the second eigenvalue of \eqref{prima-autof-weight} in $H^1_{0,n}$, having both a radial and a nonradial eigenfunction.
	\eqref{unod-p=1-fin-1} and \eqref{unod-p=1-fin-1-alti} are equivalent and hold true, but we are not able to deduce the asymptotic behaviour of $U_{p,n}$.
	\end{remark}

\subsection{Proof of the multiplicity result}

From the asymptotic profile in Theorem \ref{n-asympt}, it is not hard to see that for $p$ close to $1$ the least energy nodal $n$-invariant solutions $U_{p,n}$ are nonradial and different one from another for $n=1, \dots  \left\lceil \frac{2+\a}{2}\beta-1\right\rceil$, radial otherwise.

First we conclude the proof of Theorem \ref{teo:existence-N=2}

	\begin{proof}[Proof of Theorem \ref{teo:existence-N=2}]
We have seen in  Corollary \ref{unod-nonradial} that there exists $\bar p=\bar p(\a)$ such that $U_{p,n}$ is nonradial for $1<p<\bar p$, as $n= 1, \dots \left\lceil\frac{2+\a}{2}\beta -1\right\rceil$.
		It remains to check that $U_{p,n}\neq U_{p,k}$ if $n\neq k$ for every $p$ in a right neighborhood of $1$,  possibly smaller than $(1,\bar p)$.
		It follows by Theorem \ref{n-asympt} which states that they converge to different eigenfunctions of \eqref{prima-autof-weight}.
	\end{proof}

After we show that the other least energy $n$-invariant solutions, i.e. $U_{p,n}$ for $n >\frac{2+\a}{2}\beta $, are radial for $p$ close to 1, by adapting to the H\'enon equation the arguments in \cite[Proposition 10.5]{GI}

\begin{proposition}\label{comeGI}
	Let $n > \frac{2+\a}{2}\beta $, then there  exists $\bar p>1$ such that for every $p\in (1,\bar p)$ $U_{p,n}$ is radial and coincides with $u^{\ast}_p$.
	\end{proposition}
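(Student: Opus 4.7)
My plan is to argue by contradiction. Suppose that there exists a sequence $p_k\to 1^+$ along which $U_{p_k,n}$ is non-radial. Under the hypothesis $n>\frac{2+\a}{2}\beta$, Theorem \ref{n-asympt} gives that $\bar U_k:=U_{p_k,n}/\|U_{p_k,n}\|_\infty$ converges in $C^2(\bar B)$ to a radial profile (proportional to $\jcal_0(z_2(0)|x|^{(2+\a)/2})$). I would decompose $U_{p_k,n} = U^{\rad}_k + W_k$ as its spherical average plus the $L^2(B;|x|^\a dx)$-orthogonal complement to the subspace of radial functions. This decomposition gives $\e_k := \|W_k\|_\infty / \|U_{p_k,n}\|_\infty \to 0$, while the non-radiality assumption ensures $\e_k>0$ for every $k$.

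Next I would derive a linear PDE for $W_k$. Using the Taylor expansion
\[
|U_{p_k,n}|^{p_k-1}U_{p_k,n} = |U^{\rad}_k|^{p_k-1}U^{\rad}_k + p_k|U^{\rad}_k|^{p_k-1}W_k + R_k,
\]
and observing that the first summand is radial (so has zero non-radial projection) and the second already has zero spherical average (radial times a zero-average function), projecting \eqref{H} onto the orthogonal complement of radial functions yields
\[
-\Delta W_k = p_k|x|^\a|U^{\rad}_k|^{p_k-1}W_k + |x|^\a \pi^{\perp}R_k \quad\text{in } B,\qquad W_k=0 \text{ on } \partial B,
\]
with $\pi^{\perp}$ the projection orthogonal to radial functions. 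Setting $\widetilde W_k:=W_k/\|W_k\|_\infty$ and arguing as in Corollary \ref{p1v}, the coefficient $p_k|U^{\rad}_k|^{p_k-1}$ converges to $\mu_{0,2}$ in a sense sufficient to pass to the limit in the linear PDE; combined with elliptic regularity and the uniform bound $\|\widetilde W_k\|_\infty=1$, one extracts a subsequence with $\widetilde W_k \to \widetilde W$ uniformly on $\bar B$. The limit satisfies $\|\widetilde W\|_\infty=1$, lies in $H^1_{0,n}$, is orthogonal in $L^2(B;|x|^\a dx)$ to every radial function, and solves
\[
-\Delta \widetilde W = \mu_{0,2}|x|^\a \widetilde W \ \text{in } B,\qquad \widetilde W=0 \text{ on } \partial B.
\]

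The contradiction then follows from the simplicity result in \emph{Step 4} of the proof of Theorem \ref{n-asympt}: for $n>\frac{2+\a}{2}\beta$, the second eigenvalue of \eqref{prima-autof-weight} in $H^1_{0,n}$ equals $\mu_{0,2}$ and is simple, with only eigenfunction (up to a scalar) the radial $\omega_{0,2}$. Together with the orthogonality of $\widetilde W$ to radial functions this forces $\widetilde W\equiv 0$, contradicting $\|\widetilde W\|_\infty=1$. Hence $U_{p,n}$ is radial for $p$ in a right neighborhood of $1$, and by the uniqueness of the radial nodal solution with two nodal zones \cite{NN} it coincides with $\pm u^{\ast}_p$.

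The main technical obstacle will be controlling the remainder $\pi^{\perp}R_k/\|W_k\|_\infty$ near the nodal set of $U^{\rad}_k$, where the factor $|U^{\rad}_k|^{p_k-2}$ appearing in the bound $|R_k|\le C(p_k-1)|U^{\rad}_k|^{p_k-2}|W_k|^2$ is singular. Away from the nodal set the right-hand side vanishes uniformly as $k\to\infty$; near it, one has to exploit the vanishing measure of $\{|U^{\rad}_k|\le\delta\}$ (controlled via the radial profile from Theorem \ref{p1}) together with $L^q$ rather than uniform estimates, so that the overall contribution to the projected equation remains negligible.
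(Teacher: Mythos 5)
Your proposal is correct in outline and takes a genuinely different route from the paper. The paper compares $U_{p_k,n}$ with the \emph{genuine} radial solution $u^\ast_{p_k}$, forming $w_k = (U_{p_k,n}-u^\ast_{p_k})/\|U_{p_k,n}-u^\ast_{p_k}\|_\infty$; the Mean Value Theorem then produces an exact linear equation $-\Delta w_k = p_k\mu_{0,2}|x|^\a c_k\,w_k$ with no remainder. But because $w_k$ carries no orthogonality information, its limit is $\omega_{0,2}$ itself, which is \emph{not} a contradiction; the paper must therefore test the equation against $\omega_{0,2}$, divide by $p_k-1$, and pass to the limit in a second--order identity using the refined expansion \eqref{point-limit} of Corollary \ref{cor} and the constant $c$ of \eqref{def:c}, ultimately reaching $\int_B|x|^\a\omega_{0,2}^2\,dx=0$. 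You instead split off the spherical average, $U_{p_k,n}=U^{\rad}_k+W_k$, paying the price of a Taylor remainder $R_k$ (for $p<2$ the map $s\mapsto|s|^{p-1}s$ is not $C^2$ at $0$) but gaining the $L^2(B;|x|^\a dx)$--orthogonality of $\widetilde W_k=W_k/\|W_k\|_\infty$ to radial functions; since this orthogonality passes to the limit, and the eigenspace of $\mu_{0,2}$ in $H^1_{0,n}$ is one--dimensional and radial by \emph{Step 4}, the limit $\widetilde W$ must vanish, contradicting $\|\widetilde W\|_\infty=1$ with no second--order expansion needed. Your diagnosis of the technical obstacle is accurate and the remedy you sketch is workable: on the good set $\{|W_k|\le|U^{\rad}_k|/2\}$ the crude bound can be sharpened to $|R_k|/\|W_k\|_\infty\le C(p_k-1)|U^{\rad}_k|^{p_k-2}|W_k|^2/\|W_k\|_\infty\le C(p_k-1)|U^{\rad}_k|^{p_k-1}\le C(p_k-1)\|U_k\|_\infty^{p_k-1}\to 0$ uniformly, while on the bad set one has the model--free estimate $|R_k|\le C|W_k|^{p_k}$ (from H\"older continuity of $f'$), so $|R_k|/\|W_k\|_\infty\le C\|W_k\|_\infty^{p_k-1}$ stays bounded and the bad set $\subset\{|\bar U^{\rad}_k|<2\e_k\}$ has vanishing measure; this yields $L^q$ smallness for any finite $q$, sufficient for $W^{2,q}$ hence $C^{1,\gamma}$ bounds and uniform convergence of $\widetilde W_k$. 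Two small remarks: the convergence from Lemma~\ref{lem:conv-eigenv} is in $C^2(B)\cap C(\bar B)$, not $C^2(\bar B)$, but $C(\bar B)$ suffices to conclude $\e_k\to 0$; and once radiality is established you should also invoke that the nodal--Nehari minimizer has exactly two nodal zones (as in Proposition~\ref{comeBWW}) before applying the uniqueness of \cite{NN} to identify $U_{p,n}=\pm u^\ast_p$. In summary: your argument trades the remainder estimate for dispensing with Corollary~\ref{cor} and with the second--order integral identity, which is a reasonable and arguably more transparent trade.
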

\begin{proof}
	In this case we know by Theorems \ref{p1} and  \ref{n-asympt}  that both $\|u^\ast_p\|_{\infty}^{p-1}$ and $\|U_{p,n}\|_{\infty}^{p-1}$ converge to $\mu_{0,2}=\left(\frac{2+\a}{2}z_2(0)\right)^2$.
	We assume  that for a given sequence $p_k\to 1$ $U_{p_k,n} \neq u^{\ast}_{p_k} $ and deduce a contradiction. To this aim we define \[w_k = \frac{U_{p_k,n}-u^{\ast}_{p_k} }{\| U_{p_k,n} - u^{\ast}_{p_k} \|_{\infty}} .\]
	The assumption $U_{p_k,n} \neq u^{\ast}_{p_k} $  implies that there is a sequence $P_k\in B$ where $w_k(P_k)= \pm 1$, and w.l.o.g.  we can take $w_k(P_k)=1$ and $P_k\to \bar P\in \bar B$.
	Furthermore $w_k$ solves a linear Dirichlet problem 
	\begin{equation}\label{eq:diff}
	\begin{cases}
	-\Delta w_k = |x|^{\a} p_k \mu_{0,2} c_k\,  w_k & \text{ in } B , \\
	w_k=0 & \text{ on } \partial B,
	\end{cases}
	\end{equation}
where $c_k$ is given by the Mean Value Theorem
\[
c_k(x) = \frac{1}{\mu_{0,2}}\int_0^1 \left| t \, U_{p_k,n}(x) + (1-t) \, u^{\ast}_{p_k}(x) \right|^{p_k-1} dt .
\]
Clearly  $|c_k(x)|\le C \left(\|U_{p_k,n}\|_{\infty}^{p_k-1}  + \|u^{\ast}_{p_k}\|_{\infty}^{p_k-1}\right)$ is bounded, let us check that $c_k(x) \to1$ almost everywhere. Indeed the asymptotic expansion in \eqref{point-limit} for both $u^{\ast}_p$ and $U_{p,n}$ gives that
\begin{equation}\label{pass-0} 
h(x,t ): =  t \mu_{0,2}^{-\frac{1}{p_k-1}}U_{p_k,n} + (1-t) \, \mu_{0,2}^{-\frac{1}{p_k-1}}u^{\ast}_{p_k} \to  e^{c} \omega_{0,2} 
\end{equation}
uniformly for $(x,t)\in \bar B\times[0,1]$, where $c$ is the constant defined in \eqref{def:c}. 
So 
\begin{align}\label{pass-3}
c_k (x)= \int_0^1 \left|h(x,t)\right|^{p_k-1} dt 	\to 1 
\end{align}
uniformly on any closed subsect of $B$ which does not contain the zero set of $\omega_{0,2}$, i.e. the circle of radius $z_1(0)/z_2(0)$.
Therefore $w_k$ converges (weakly and then, by elliptic estimates, in $C(\bar B)$) to a solution  $w$ of \eqref{prima-autof-weight} related to $\mu_{0,2}$.
Such limit function is nontrivial since by the uniform convergence $w(\bar P)=1=\|w\|_{\infty}$, hence Lemma \ref{lem:autov-peso} yields $w(x)= \omega_{0,2} (x)=\mathcal J_0 (z_2(0)\, |x|)$.

On the other multiplying  the equation in \eqref{eq:diff} by $\omega_{0,2}$ and integrating by parts gives
\begin{align*}
p_k \mu_{0,2} \int_B |x|^{\a} c_k  w_k  \omega_{0,2} dx = \int_B \nabla w_k \nabla \omega_{0,2}  dx = \mu_{0,2} \int_B |x|^{\a}w_k  \omega_{0,2} dx .
\end{align*}
So 
\begin{equation}\label{pass-1}
(p_k-1) \int_B |x|^{\a} c_k  w_k \omega_{0,2} dx  = \int_B |x|^\a (1-c_k) w_k \omega_{0,2} dx .
\end{equation}
But using the definition of $c_k$ and the elementary equality \eqref{per-dopo} one sees that 
\begin{align*}
c_k  - 1 =  (p_k-1) \int_0^1 \log | h(t,x) |  \int_0^1  |h(t,x) |^{s(p_k-1)} ds \, dt ,
\end{align*}
which inserted into \eqref{pass-1} gives
\begin{align*}
\int_B |x|^{\a} c_k  w_k \omega_{0,2} dx  &=- \int_B |x|^\a  w_k \omega_{0,2}  \int_0^1 \log | h(t,x) |  \int_0^1  |h(t,x) |^{s(p_k-1)} ds \, dt \, dx 
\end{align*}
and passing to the limit \eqref{pass-0}, \eqref{pass-3} imply
\begin{align*}
\int_B |x|^{\a} \omega_{0,2}^2 dx  &= -\int_B |x|^\a  \omega_{0,2}^2 \left( c+\log |\omega_{0,2}|  \right) dx =0
\end{align*}
by the definition of the constant $c$ given in \eqref{def:c}.
But of course $\omega_{0,2}$ is nontrivial, and so we have reached the desired contradiction.
\end{proof}

In \cite{AG-N=2} it has been proved that for large values of $p$ the functions $U_{p,n}$ are nonradial for $n < \frac{2+\a}{2} \kappa$, where $\kappa \approx 5.1869 $ is a fixed number related to the computation of the Morse index when $p\to\infty$. Therefore  in the range $\left[\frac{2+\a}{2}\beta +1\right] \le  n \le \left\lceil \frac{2+\a}{2} \kappa-1\right \rceil$ there is a breaking of symmetry, in the sense that the curve $p\mapsto U_{p,n}$ coincides with the curve of radial solution on an interval $(1,\bar p_n)$, and then bifurcates giving rise to a global branch of nonradial solutions.
\\
Asides from $n$-invariant solutions, the issue of nonradial bifurcation from the curves $p\mapsto u_p$ of radial solutions (even with a larger number of nodal zones, and in higher dimension) and the separation of the various branches deserves a further study, which can be carried on starting from the computation of the Morse index of radial solutions at the ends of the existence range performed here and in \cite{AG-N>3}, \cite{AG-N=2}. It will be the object of a forthcoming paper \cite{Ama-bif}.

\end{document}